\documentclass{article}
\usepackage{amsmath, amssymb, amsthm, graphicx, bm, url,color,
  cite,comment}
   
\usepackage[all]{xy}

\usepackage{tikz}
\usetikzlibrary{matrix,arrows,calc,intersections,fit}
\usepackage{tikz-cd}
\usepackage{circuitikz}

\DeclareMathOperator{\Mod}{mod}\DeclareMathOperator{\supp}{supp}\DeclareMathOperator{\obj}{obj}
\DeclareMathOperator{\grad}{grad} 
 \DeclareMathOperator{\diam}{diam}
 \DeclareMathOperator{\tr}{tr}
 \DeclareMathOperator{\di}{div}
 
\DeclareMathOperator{\im}{im}

\DeclareMathOperator{\Tw}{Tw}
\DeclareMathOperator{\sym}{sym}

\DeclareMathOperator{\fct}{fct}
\DeclareMathOperator{\rf}{rf}\DeclareMathOperator{\vg}{vg}

\theoremstyle{plain}
\newtheorem{thm}{Theorem}[section]
\newtheorem{prop}[thm]{Proposition}
\newtheorem{lem}[thm]{Lemma}

\newtheorem{cor}[thm]{Corollary}
\theoremstyle{definition}

\newtheorem{dfn}[thm]{Definition}

\newtheorem{exam}[thm]{Example}
\newtheorem{rem}[thm]{Remark}
\newtheorem{hp}[thm]{Hypothesis}

\numberwithin{equation}{section}
\numberwithin{figure}{section}

\def\wh{\widehat}
\def\e#1\e{\begin{equation}#1\end{equation}}
\def\iz#1\iz{\begin{itemize}#1\end{itemize}}
\def\ea#1\ea{\begin{align}#1\end{align}}
\def\eq{\eqref}
\def\l{\label}
\def\0{\hspace{0pt}}

\def\Uh_#1{\,\widehat{\!U}_{\!#1}}

\def\fm{{\mathfrak m}}
\def\fn{{\mathfrak n}}

\def\F{{\mathbb F}}

\def\id{{\rm id}}

\def\SL{\mathop{\rm SL}}
\def\GL{\mathop{\rm GL}}

\def\Im{\mathop{\rm Im}}

\def\End{\mathop{\rm End}}

\def\vol{\mathop{\rm vol}\nolimits}

\def\ge{\geqslant}
\def\le{\leqslant\nobreak}

\def\B_#1{\,\ov{\!B_{#1}}}
\def\Ba_#1{\,\ov{\!B_{#1}\!}\,}

\def\cA{{\mathbin{\cal A}}}
\def\cB{{\mathbin{\cal B}}}
\def\cC{{\mathbin{\cal C}}}

\def\cE{{\mathbin{\cal E}}}
\def\cF{{\mathbin{\cal F}}}
\def\cG{{\mathbin{\cal G}}}
\def\cH{{\mathbin{\cal H}}}

\def\cM{{\mathbin{\cal M}}}
\def\Mbar{\overline{\cM}}

\def\cP{{\mathbin{\cal P}}}

\def\={\equiv}

\def\cW{{\mathbin{\cal W}}}
\def\cX{{\mathbin{\cal X}}}

\def\C{{\mathbin{\mathbb C}}}\def\N{{\mathbb N}}
\def\CP{{\mathbin{\mathbb{C}}}P}
\def\RP{{\mathbin{\mathbb{R}}}P}

\def\Q{{\mathbin{\mathbb Q}}}
\def\R{{\mathbin{\mathbb R}}}
\def\Z{{\mathbin{\mathbb Z}}}
\def\al{\alpha}
\def\be{\beta}
\def\ga{\gamma}
\def\de{\delta}
\def\et{\eta}
\def\io{\iota}
\def\ep{\epsilon}
\def\la{\lambda}
\def\ka{\kappa}
\def\th{\theta}
\def\ze{\zeta}

\def\si{\sigma}
\def\om{\omega}
\def\ph{\phi}
\def\Ph{\Phi}
\def\ps{\psi}
\def\Ps{\Psi}
\def\r{\rho}
\def\ta{\tau}
\def\De{\Delta}
\def\La{\Lambda}
\def\Si{\Sigma}

\def\Th{\Theta}
\def\Om{\Omega}
\def\Ga{\Gamma}

\def\bw{\bigwedge}

\def\d{{\partial}}

\def\ts{\textstyle}

\def\w{\wedge}
\def\-{\setminus}

\def\op{\oplus}
\def\bop{\bigoplus}
\def\ot{\otimes}
\def\bot{\bigotimes}
\def\ov{\overline}

\def\iy{\infty}
\def\es{\emptyset}

\def\t{\times}
\def\cm{\circ}

\def\hf{{\frac{1}{2}}}

\def\nb{\nabla}
\def\sb{\subseteq}
\def\k{{\mathbf{k}}}\def\b{{\mathbf{b}}}

\def\nc{{\rm nc}}
\def\v{{\mathbf{v}}}\def\dirlim{\displaystyle\varinjlim}
\def\ex{{   {\rm ex}    }}\def\ev{{\rm ev}}
 \def\unr{{\rm unr}}\def\tame{{\rm tame}}
\title{Nearby Special Lagrangians}
\author{Mohammed Abouzaid and Yohsuke Imagi}
\begin{document}
\maketitle
\begin{abstract}
Let $X$ be a Calabi--Yau manifold and $Q\subset X$ a closed connected embedded special Lagrangian; closed Lagrangians mean compact Lagrangian submanifolds without boundary. We prove that if the fundamental group $\pi_1Q$ is abelian then there exists a Weinstein neighbourhood of $Q\subset X$ in which every closed irreducibly immersed special Lagrangian with unobstructed Floer cohomology is $C^1$ close to $Q.$ We prove also that if $\pi_1Q$ is virtually solvable then for every positive integer $R$ there exists a Weinstein neighbourhood of $Q\subset X$ in which every closed irreducibly immersed special Lagrangian of degree $\le R$ and with unobstructed Floer cohomology is unbranched; that is, the projection $L\to Q$ is a covering map. We prove a stronger statement when $\pi_1Q$ is finite and a weaker statement when $\pi_1Q$ has no non-abelian free subgroups. The $\pi_1Q$ conditions, the Floer cohomology condition and the special Lagrangian condition are all essential as we show by counterexamples.
\end{abstract}
\setcounter{tocdepth}{1}
\tableofcontents
\section{Introduction}
We begin by defining the basic notions we will use. 
\begin{dfn}\l{dfn: sLag}
A {\it Calabi--Yau} manifold is a complex manifold $Y$ of complex dimension $n,$ equipped with a K\"ahler form and a nowhere-vanishing holomorphic $(n,0)$ form $\Om.$ Joyce \cite{J5} and other authors call this an {\it almost} Calabi--Yau manifold to indicate that this need not be Ricci-flat. We do more symplectic topology in the present paper and therefore leave out the word almost so as not to be confused with almost complex structures.

A {\it Lagrangian} in the Calabi--Yau manifold $Y$ is a Lagrangian immersion $\io:\hat L\to Y$ whose intersection points are transverse double points. We refer only to the image $L:=\io(\hat L)\subset X$ when we want to save notation. We say that $\io:\hat L\to Y$ is {\it closed} if $\hat L$ is compact without boundary, {\it embedded} if $\io$ is an embedding of manifolds, {\it irreducibly immersed} if $\hat L$ is connected, and {\it special of phase $\th\in\R$} if $e^{-i\th/2}\io^*\Om$ is a nowhere-vanishing real $n$-form on $\hat L.$ We then orient $\hat L$ by $e^{-i\th/2}\io^*\Om.$ Plainly by a special Lagrangian we mean a special Lagrangian of phase $0\in\R.$ 
\end{dfn}

Let $Y$ be a Calabi--Yau manifold and $Q\subset Y$ a closed embedded special Lagrangian. McLean \cite{ML} determines those closed special Lagrangians which are $C^1$ close to $Q\subset Y.$ We study the $C^0$ version of this problem; that is, we seek a neighbourhood of $Q\subset X$ in which we can determine as many closed special Lagrangians as possible. 

There are some circumstances in which no non-trivial $C^0$ nearby special Lagrangians exist. Tsai and Wang \cite{TW0,TW} prove indeed that if the K\"ahler metric on $Y$ is Ricci-flat and the induced metric on $Q$ Ricci-positive then there exists a neighbourhood of $Q\subset Y$ in which every closed special Lagrangian has image equal to $Q.$ 

There are other circumstances in which many $C^0$ nearby special Lagrangians exist. For instance, let $Y$ be a hyperK\"ahler $4$-manifold and $Q$ a compact Riemann surface of genus $>1,$ embedded in $Y$ as a complex Lagrangian with respect to one of the three complex structures; then the hyperK\"ahler rotations of Hitchin spectral curves are $C^0$ nearby special Lagrangians. We make in Example \ref{higher-dim ex} a higher-dimensional version of this, using special Lagrangian surgeries. Donaldson and He \cite{Don,He,He2} construct more examples of branched special Lagrangians which are locally modelled upon the product of $S^1$ and a $2$-fold spectral curve. These special Lagrangians are all branched as we define now. 

\begin{dfn}\l{dfn: br}
Let $Y$ be a Calabi--Yau manifold and $Q\subset Y$ a closed connected embedded special Lagrangian. A {\it Weinstein neighbourhood} of $Q\subset Y$ means a neighbourhood $X$ which may be embedded, as a compact disc sub-bundle, into the cotangent bundle $T^*Q$ with its standard symplectic form. For a closed irreducibly immersed special Lagrangian $\io:\hat L\subset X$ we define its {\it degree} to be that of the composite $\hat L\to  Q$ of the immersion $\io:\hat L\to X$ and the projection $X\sb T^*Q\to Q;$ the degree is an integer because special Lagrangians are oriented. We say that $\io:\hat L\to X$ is {\it unbranched} if the map $\hat L\to Q$ is a finite cover (of degree equal to what we have just defined). This is equivalent to saying that there exists a finite cover $P\to Q$ such that the pre-image of $L:=\io(\hat L)\subset X$ under the induced map $T^*P\to T^*Q$ is a union of $C^1$ perturbations of $P$ (if $L$ is embedded, this union is disjoint); here a $C^1$ perturbation means the graph over $P$ of some closed $1$-form. We say that $L$ is {\it branched} if $L$ is not unbranched.
\end{dfn}

We prove that no branching occurs under the hypotheses which are weaker than that of Tsai and Wang, except that we can treat only those Lagrangians with unobstructed Floer cohomology. We make a brief definition of the key words about this, referring to \S\ref{sec: Floer} for the actual contents.
\begin{dfn}\l{dfn: HF}
Let $Y$ be a Calabi--Yau manifold, $Q\subset Y$ a closed connected embedded special Lagrangian, and $X$ a Weinstein neighbourhood of $Q\subset Y.$ Let $\La$ be a Novikov field in the sense of Definition \ref{dfn: Novikov}. As $X$ is an exact symplectic manifold we can use a result of Fukaya, Oh, Ohta and Ono \cite{FOOO-Z} by which we can allow $\La$ to have characteristic $p>0.$ Let $L\subset X$ be a special Lagrangian with immersion map $\hat L\to X.$ By a {\it filtered local system} of $L$ we mean a finite-rank non-zero local system on $\hat L$ of valued $\La$-vector spaces in the sense of Definition \ref{dfn: val vect}. We say that $L$ has {\it unobstructed Floer cohomology,} or for short, {\it $HF^*$ unobstructed} if there exists a filtered local system $E$ of $L$ and a bounding cochain of $(L,E)$ in the sense of Definition \ref{dfn: bound coch}. The zero-section $Q\subset X$ for instance has $HF^*$ unobstructed with any choice of local system on this because it has essentially no holomorphic discs. The same holds for its $C^1$ perturbations too. 
\end{dfn}

We state now the main results of the paper.
\begin{thm}\l{thm: main}
Let $Y$ be a Calabi--Yau manifold, $Q\subset Y$ a closed connected embedded special Lagrangian, and $X$ a Weinstein neighbourhood of $Q\subset Y;$ then the following hold.
\iz
\item[\bf(i)] Let the fundamental group $\pi_1Q$ be finite and let $L\subset X$ be a closed irreducibly immersed special Lagrangian of phase $\th\in\R$ with $HF^*$ unobstructed; then $\th\in2\pi\Z$ and $L=Q.$
\item[\bf(ii)]
If $\pi_1Q$ is virtually abelian (that is, having a finite-index abelian subgroup) then there exists a neighbourhood $U\sb X$ of $Q$ such that every closed irreducibly immersed special Lagrangian $L\subset X$ with $HF^*$ unobstructed and contained in $U$ is unbranched; moreover, if $\pi_1Q$ is (strictly) abelian then $L$ is a $C^1$ perturbation of $Q.$
\item[\bf(iii)]
If $\pi_1Q$ is virtually solvable then for every $R>0$ there exists a neighbourhood $U\sb X$ of $Q$ such that the following holds: let $L\subset X$ be a closed irreducibly immersed special Lagrangian of degree $\le R,$ contained in $U,$ and which has $HF^*$ unobstructed with respect to some filtered local system of rank $\le R;$ then $L$ is unbranched.
\item[\bf(iv)]
If $\pi_1Q$ has no non-abelian free subgroups then for every $R>0$ there exists a neighbourhood $U\sb X$ of $Q$ such that the following holds: let $L\subset X$ be a closed irreducibly immersed special Lagrangian of degree $\le R,$ contained in $U,$ and which has $HF^*$ unobstructed with respect to some filtered local system of rank $\le R$ over some characteristic-zero Novikov field; then $L$ is unbranched.
\iz
\end{thm}
If $\pi_1Q$ is finite as in (i) above then the other hypotheses on $\pi_1Q$ in (ii)--(iv) hold too except the strict abelian condition in the latter part of (ii). But the conclusion of (i) is still stronger in two respects: we do not have to shrink the Weinstein neighbourhood $X,$ and we can include special Lagrangians of any phase. By Myers' theorem the hypothesis that $\pi_1Q$ should be finite follows from that of Tsai and Wang who suppose that $Q$ has positive Ricci curvature. 

We give now a sketch of the proof of Theorem \ref{thm: main}. The Lagrangian $L$ has $HF^*$ unobstructed and defines accordingly, as we shall see in \S\ref{sec: Floer}, an object $\b$ of the Fukaya category $\cF(X).$ This is a strict $A_\iy$ category whose hom spaces are valued vector spaces over some Novikov field $\La.$ We recall in \S\ref{sec: Nov} that the algebraic closure of $\La$ is itself a Novikov field. As the bounding cochain of $\b$ may still be defined with coefficients in the larger field, we can suppose that the original Novikov field $\La$ is already algebraically closed.

We embed $\cF(X)$ into the wrapped Fukaya category $\cW(X)$ which include objects supported on compact Lagrangians with boundary. Fix a point $q\in Q$ and by an abuse of notation identify the non-compact fibre $T^*_qQ\subset T^*Q$ with the compact fibre in $X$ over the same point $q.$ This defines then an object of $\cW(X)$ which we denote by the same symbol $T^*_qQ.$ 

The hom spaces of $\cF(X)$ and $\cW(X)$ are not only filtered but also $\Z$-graded $\La$-vector spaces. We denote by $HF^*(T_q^*Q,\b)$ the hom space from $T^*_qQ$ to $\b.$ Since the Lagrangian $L$ is special it follows that $HF^*(T_q^*Q,\b)$ is non-zero and supported in a single degree; the non-zero property will be proved in Corollary \ref{cor: special non-zero}, and the single degree property in Lemma \ref{lem: single degree}. 

We prove then that $\cF(X)$ satisfies the split-generation and other properties. These are known for exact Lagrangians. But no non-trivial special Lagrangian $C^1$ perturbations of $Q$ are exact, so we do include non-exact Lagrangians in $\cF(X)$ and show that the split-generation and other statements are still valid. These imply then that $HF^*(T_q^*Q,\b)$ is a certain representation of the fundamental group $\pi_1Q$ and that this representation determines the object $\b$ up to isomorphism in the cohomology category $H^0\cF(X).$ 

Suppose now that $\pi_1Q$ is strictly abelian as in the last part of Theorem \ref{thm: main} (ii). Since $\La$ is algebraically closed we get then a one-dimensional sub-representation of $HF^*(T_q^*Q,\b).$ This may be realized by a special Lagrangian $C^1$ perturbation of $Q$ with some local system on it. Then by a Thomas--Yau theorem, Proposition \ref{prop: TY} (i), we can show that the original special Lagrangian $L$ agrees with the $C^1$ perturbation of $Q.$ 

Theorem \ref{thm: main} (i) is proved in \S\ref{sec: proof main i}, which is simpler than that of (ii). The chief reason is that the algebraic problem about the $\pi_1Q$ representation will be trivial after we pass to the universal cover of $Q.$

Theorem \ref{thm: main} (iii) is proved in the same way as (ii) is proved, but the representation theory argument will be more complex. The new part will be the use of Lie--Kolchin's theorem and we shall therefore be restricting the representation to a finite-index subgroup of $\pi_1Q.$ Denote by $P\to Q$ the corresponding finite cover. Using the upper bound $R$ given in (iii) we prove in \S\ref{sec: finite McLean} that the Green operator over $P$ (a right inverse to the linearized operator in McLean's deformation theory) is bounded above by a constant independent of the original $L$ and $\b.$ This implies that the $C^1$ perturbation argument will work in $T^*P$ after pulling back the relevant objects in $T^*Q.$ The result is that $L$ is the immersed image of some $C^1$ perturbation of the zero-section $P\subset T^*P.$ So $L$ is unbranched as we have to prove.

The proof of Theorem \ref{thm: main} (iv) is nearly the same as that of (iii). The new part will be the use of Tits' alternative theorem. We shall need the characteristic to be zero for an entirely algebraic reason which we explain in \S\ref{sec: discuss}.



The hypotheses on $\pi_1Q$ in (ii)--(iv) are essential to Theorem \ref{thm: main} (ii)--(iv). Suppose for instance that $Q$ is a compact Riemann surface of genus $>1.$ Then $\pi_1Q$ has non-abelian free subgroups, satisfying none of the hypotheses in (ii)--(iv). Feix \cite{Fx} and Kaledin \cite{Kal} construct a hyperK\"ahler metric on a neighbourhood of the zero-section $Q\subset T^*Q,$ in which we can accordingly define the hyperK\"ahler rotations of spectral curves. These have $HF^*$ unobstructed with respect to any local systems over any Novikov field, because they are of real dimension $2$ (for which trivial bounding cochains exist). 


The strict abelian condition (rather than the virtually abelian condition) is essential to the last part of Theorem \ref{thm: main} (ii). We prove in Corollary \ref{flat} that there are multi-valued graphs over a certain flat manifold $Q.$ This $Q$ has a non-trivial finite cover by $T^n$ and the fundamental group $\pi_1Q$ is accordingly not abelian. 

We show that Lagrangians being special is indispensable to Theorem \ref{thm: main} (ii). We prove in Example \ref{ex: non-special} that there exists an example of $Q$ with $\pi_1Q\cong\Z$ and branched $C^0$ nearby non-special Lagrangians with $HF^*$ unobstructed. We prove in Corollary \ref{cor: non-special} that these are not special even in the weak categorical sense.

We prove in Corollary \ref{O} and in Example \ref{ob ex} that there exist branched $C^0$ nearby special Lagrangians with $HF^*$ obstructed. We verify directly that these properties hold except the $HF^*$ obstructed condition. So if they had $HF^*$ unobstructed, Theorem \ref{thm: main} would be false. Hence it follows that they have $HF^*$ obstructed. There is a further consequence, Corollary \ref{O2}, which is entirely about Morse $1$-forms on $Q,$ having nothing directly to do with special Lagrangians.

We begin in \S\ref{sec: Floer} with the more formal treatment of Fukaya categories. In \S\ref{sec: proof} we prove Theorem \ref{thm: main}. In \S\ref{sec: analytical} we recall Tsai--Wang's result and give the examples related to it. In \S\ref{sec: branch} we give the branched examples and prove the relevant results.

{\bf Acknowledgements.}
This paper is based upon the discussions we had in Fall 2016 at the Institute for Advanced Study in Princeton. We are both grateful to David Treumann for pointing out that spectral curves are the typical counter example to our results, to Dominic Joyce for helping us to find the examples with $HF^*$ obstructed, and to the referees for pointing out the mistakes in the earlier versions of the paper and helping us to improve the presentation. The second author would also like to thank Mark Haskins and Martin Li for interesting conversations which inspired him to begin this work.

The first author was supported by the Erik Ellentuck Fellowship, the IAS Fund of Math, NSF grants DMS-1308179,  DMS-1609148 and DMS-1564172, DMS-2103805 and by the Simons Foundation through its ``Homological Mirror Symmetry'' Collaboration grant.
The second author was supported by the WPI project at the Kavli Institute for Physics and Mathematics of Universe, by the JSPS grants 16K17587, 18J00075 and 21K13788, and by the NSFC grant 11950410501.

\section{Fukaya Categories}\l{sec: Floer}
In this section we explain the definition and properties of the Fukaya categories concerned with the proof of Theorem \ref{thm: main}. We use pseudo-holomorphic curve moduli spaces to define them. As we include non-exact Lagrangians we cannot follow the direct perturbation method of Seidel \cite{Seid3}. We follow instead the virtual perturbation method of Akaho--Joyce \cite{AJ} and Fukaya, Oh, Ohta and Ono \cite{FOOO}. Our treatment is however slightly different from theirs. On the one hand, ours is simpler in that we shall need to define Floer cohomology groups only as $\La$-vector spaces rather than as $\La^0$ modules as in \cite{AJ,FOOO}. The latter has more information but we shall not need it. On the other hand, for our theorem (Theorem \ref{thm: main}) to hold, we need to include filtered local systems (in the sense of Definition \ref{dfn: loc}) in the Fukaya categories. Although this is not explicitly done in \cite{AJ,FOOO} it is a straightforward modification.

Also, as we are concerned with wrapped Fukaya categories we need to include countably many Lagrangians (in \cite{AJ,FOOO} the authors deal with single or finitely many Lagrangians). We do this by the same induction process as in \cite{AJ,FOOO}. Recall that we can deal only with finitely many kinds of Kuranishi spaces in doing the virtual counts \cite[\S7.2.3]{FOOO} and the results are accordingly a finite approximation of what we want. These are called gapped $A_N$ categories in our simplified notation, Definition \ref{dfn: gap} (or gapped $A_{NK}$ algebras in their notation \cite{AJ,FOOO}). By the obstruction theory arguments of \cite{AJ,FOOO} we can pass to the limit called gapped $A_\iy$ categories. In this method the issue \cite[\S7.2.3]{FOOO} above arises every time we have infinitely many things. We do have a new parameter which is the number of Lagrangians. To deal with this we modify the obstruction theory arguments of \cite{AJ,FOOO}, which we do in Theorem \ref{thm: ob1}.

We thus obtain the gapped $A_\iy$ categories we shall need. (These are curved $A_\iy$ categories, including the $\fm^0$ terms. We make them into strict $A_\iy$ categories by adding bounding cochains, following the references \cite{FOOO,AJ}.) We define then the wrapped Fukaya categories by the localization method, which has been used already in the literature \cite{GPS} (the idea is due to Paul Seidel and the first author). The advantage of this is that we can make the definition only from the algebraic properties of Hamiltonian continuations, without going back to the study of pseudo-holomorphic curve moduli spaces.

We prove the split-generation theorem, Theorem \ref{thm: gen}, by a method of Paul Seidel and the first author (taken from their unpublished work). This is close to the proof of Fukaya, Seidel and Smith \cite{FSS1} for exact Lagrangians. They use Lefschetz fibrations, Seidel long exact sequences, and equivariant Fukaya categories; and we replace the last ingredient by the use of Viterbo restriction functors. When working with non-exact Lagrangians, the first two ingredients have already appeared in the literature, so we focus on the discussion of Viterbo restriction functors in the non-exact setting. After providing the argument for this generalisation, the rest of the proof is straightforward.

There is another method \cite{Ab1} of proving the split-generation theorem, using symplectic cohomology (for time-one periodic Hamiltonian orbits). One should then prove that it has the same relation as in the exact case to the algebraic invariants obtained from the Fukaya category.
One could do this in closed symplectic manifolds (as will appear in \cite{AFOOO}) and for monotone Lagrangians (as is done already in \cite{RitterSmith}) but not yet in the circumstances above.

We begin in \S\ref{sec: Nov} by defining Novikov fields and explaining their properties we shall need. In \S\ref{sec: gap} we define the gapped $A_\iy$ categories and provide the obstruction theory arguments. Apart from the issue mentioned above, the major difference from the references \cite{FOOO,AJ} is that we extend the definition of gapped $A_\iy$ algebras; see Remark \ref{rem: difference from FOOO}. 

The gapped structure will be used only for the process of defining the curved $A_\iy$ categories, which we shall in practice be able to treat as {\it filtered} $A_\iy$ categories (in the sense of Definition \ref{dfn: gap}) forgetting the gap conditions.

In \S\S\ref{sec: Fuk} and \ref{sec: Fuk2} we define the curved $A_\iy$ category $\cC(X)$ of a symplectic manifold $X.$
In \S\ref{sec: bounding} we define the strict $A_\iy$ categories $\cF_\nc(X),\cF(X)$ including the bounding cochains.
In \S\ref{sec: TY} we recall the Thomas--Yau theorems we shall need.
In \S\ref{sec: Ham} we explain the effect of symplectic and Hamiltonian diffeomorphisms upon the Fukaya categories.
In \S\ref{sec: loc} we introduce the wrapped Fukaya category $\cW(X)$ by the localisation method. 
In \S\ref{sec: ex} we define the exact Fukaya category $\cF_\ex(X)$ and show that its cohomology category $H\cF_\ex(X)$ is equivalent to that in the literature. 

We turn in \S\ref{sec: cotangent} to the proof of results in cotangent bundles. We prove two lemmas we shall need to deal with $C^1$ perturbations of $Q\subset X$ in the proof of Theorem \ref{thm: main} (ii)--(iv). In \S\ref{sec:gener-fukaya-categ} we prove the split-generation theorem including non-exact Lagrangians. In \S\ref{sec: Yoneda} we deal with the Yoneda functors obtained from the split-generation theorem.

\subsection{Novikov Fields}\l{sec: Nov}
We begin by recalling the definition of non-Archimedean valued fields. We call them valued fields for short. 
\begin{dfn}
By a {\it valued field} we mean the pair $(K,\v)$ of a field $K$ and a function $\v: K\to(-\iy,\iy],$ called a valuation, such that $\v^{-1}(+\iy)=\{0\}$ and for $a,b\in K$ we have $\v(ab)=\v a+\v b$ and $\v(a+b)\ge \min\{\v a,\v b\}.$
By a valued field {\it extension} of $(K,\v)$ we mean a valued field $(K',\v)$ such that $K$ is a sub-field of $K'$ and the two valuations agree on $K.$ We use the same symbol $\v$ for different valuations.

Every valued field $(K,\v)$ has a sub-ring $K^0\subset K$ defined by $K^0:=\{a\in K:\v a\ge0\},$ which is a local ring with maximal ideal $K^+:=\{a\in  K:\v a>0\};$ in other words, $a\in  K^0$ is a unit if and only if $\v a=0.$

Let $(  K,\v)$ be a valued field and define a function $|\,\,|:  K\to[0,\iy)$ by $|a|:=e^{-\v a}$ for $a\in   K.$ Then for $a,b\in   K$ the {\it strong} triangle inequality $|a+b|\le\max\{|a|,|b|\}$ holds and there is a distance function ${\rm d}:  K\times   K\to[0,\iy)$ defined by $(a,b)\mapsto|a-b|,$ making $  K$ into a metric space $(  K,{\rm d}).$
We call $(  K,\v)$ a {\it complete} valued field if $(  K,{\rm d})$ is complete.
\end{dfn}

We generalize the definition of Novikov fields in the references \cite{AJ,FOOO}. 
\begin{dfn}\l{dfn: Novikov}
Define the {\it minimum Novikov field} $(\La^{\min},\v)$ to be the Novikov field in the literature \cite{AJ,FOOO} without the Maslov index factor $e$ in their notation. We begin by recalling this. Let $\k$ be a field and $\k[\R]$ its group ring. Denote by $T\in\k[\R]$ the formal variable corresponding to $1\in \R$ so that every element $a\in\k[\R]$ may be written as $\sum_{\ga\in\R}a_\ga T^\ga,\ a_\ga\in\k$ with $\supp a:=\{\ga\in\R:a_\ga\ne0\}$ a {\it finite} subset of $\R.$ 
Define a function $\v:\k[\R]\to(-\iy,\iy]$ by $a\mapsto\min(\supp a).$ Then the function $|\,\,|:\k[\R]\to[0,\iy)$ defined by $|a|:=e^{-\v a}$ for $a\in \La$ satisfies the strong triangle inequalities, making $\k[\R]$ into a metric space; and $\La^{\min}$ is its completion. The convolution products extend to the completion, making $\La^{\min}$ into a ring. This is in fact a field because of the inversion formula $(1-a)^{-1}=1+a+a^2+\dots$ for $a\in\k[\R]$ with $\v a>0.$ The function $\v$ extends to the completion, making it into a complete valued field $(\La^{\min},\v).$
We denote an element of $\La^{\min}$ by an infinite sum $\sum_{\ga\in\R}a_\ga T^\ga,\ a_\ga\in\k,$ with $\supp a:=\{\ga\in\R:a_\ga\ne0\}$ a {\it discrete} subset of $\R.$

By {\it a Novikov field} we mean a complete valued field extension of $(\La^{\min},\v).$
We call $\k$ the {\it ground} field of the Novikov field.
\end{dfn}

We prove now that algebraically closed Novikov fields exist for arbitrary characteristic. We give two proofs: one is Krasner's lemma, Lemma \ref{lem: Krasn}, a general fact about valued fields, and the other is an explicit construction in Proposition \ref{prop: La} (ii). 

Let $(K,\v)$ be a valued field. Define a subring $K^0\sb K$ defined by
$K^0:=\{a\in K:\v a \ge0\}.$ This has a unique maximal ideal $K^+:=\{a\in K:\v a>0\}$ and the residue field
$\rf K:=K^0/ K^+.$
The {\it value group} $\vg K\sb\R$ is a subgroup of $\R$ defined by
$\vg K:= \v(K\-\{0\})\sb\R.$
We recall the following basic facts:
\begin{prop}\l{prop: ram}
Let $K$ be a complete valued field, and $L/K$ an algebraic extension;
then the valuation of $K$ extends uniquely to $L$ so that:
\iz
\item[\bf(a)] if $L/K$ has finite degree, the valuation on $L$ is complete; and
\item[\bf(b)] if $L/K$ has infinite degree, the valuation of $L$ is strictly not complete. 
\iz
Also, if $L/K$ is finite then
\e\l{fund ineq} [L:K]\ge [\rf L:\rf K][\vg L:\vg K].\e
More precisely, there exist two intermediate fields of $L/K,$
the maximal unramified field $L^\unr$ and the maximal tamely-ramified field $L^\tame,$ such that: 
\iz
\item[\bf(i)]  $K\sb L^\tame\sb L^\unr\sb L;$
\item[\bf(ii)] $\rf K\sb \rf L^\unr=\rf L^\tame \sb \rf L;$ 
\item[\bf(iii)] $\vg K=\vg  L^\unr\sb \vg L^\tame\sb \vg L,$  $ [\vg L^\tame:\vg L^\unr]=[L^\tame:L^\unr];$
\item[\bf(iv)] if $\rf K$ has characteristic $p=0$ then $L=L^\tame,$ and if $p>0$ then $[L:L^\tame]$ is a power of $p.$
\iz
\end{prop}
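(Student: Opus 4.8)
The plan is to recover Proposition \ref{prop: ram} from classical valuation theory, handling the assertions in four groups: the extension of the valuation and its uniqueness together with (a); the incompleteness (b); the fundamental inequality; and the filtration (i)--(iv).

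For a finite extension $L/K$ I would take the candidate $w(x):=\tfrac1{[L:K]}\,v(\mathrm{Nm}_{L/K}x)$. Multiplicativity is clear, and by multiplicativity the ultrametric inequality reduces to showing $w(1+x)\ge 0$ whenever $w(x)\ge 0$, i.e.\ that $\{x\in L:w(x)\ge 0\}$ is the integral closure of $K^0$ in $L$, which is standard for complete $K$ (Hensel). Uniqueness holds because any two extensions of $v$ are equivalent as norms on the finite-dimensional $K$-vector space $L$ (all norms on a finite-dimensional vector space over a complete field are equivalent) and equivalent nonarchimedean valuations agreeing on $K$ coincide. For a general algebraic extension one extends over each $K(x)$, $x\in L$, which is forced and hence consistent. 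Part (a) is then immediate: the norm equivalence gives a topological isomorphism $L\cong K^{[L:K]}$, which is complete.

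For (b) I would first dispose of the countably generated case directly. If $L=\bigcup_n K_n$ with $K=K_0\subsetneq K_1\subsetneq\cdots$ finite over $K$, then each $K_n$ is complete by (a), hence closed in $L$, and has empty interior in $L$ (any ball about a point of $K_n$ meets $K_m\setminus K_n$ for $m>n$, since $K\subsetneq K_m$), so $L$ is a countable union of closed nowhere-dense sets; if $L$ were complete it would be a Baire space, a contradiction. For arbitrary infinite algebraic $L/K$, choose a countably generated infinite subextension $M\subseteq L$, which by the above is not complete. If $L$ were complete, the closure $\widehat M$ of $M$ in $L$ would properly contain $M$; picking $\gamma\in\widehat M\setminus M$, separable over $M$, and $\delta\in M$ within the Krasner radius of $\gamma$, Krasner's lemma (Lemma \ref{lem: Krasn}) would give $M(\gamma)\subseteq M(\delta)=M$, contradicting $\gamma\notin M$. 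In characteristic $p$ one must first reduce to a separable $\gamma$ by a Frobenius/$p$-power-root argument; I expect this inseparable bookkeeping in (b) to be the main technical obstacle, the rest of the proposition being textbook.

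For the fundamental inequality (finite $L/K$) I would lift $\rf K$-linearly independent $\bar u_1,\dots,\bar u_r\in\rf L$ to $u_i\in L^0$ and choose $\pi_1,\dots,\pi_s\in L^\times$ whose valuations represent distinct cosets of $\vg K$ in $\vg L$, and prove the $rs$ products $u_i\pi_j$ are $K$-linearly independent by a dominant-term argument: in any relation $\sum_{i,j}c_{ij}u_i\pi_j=0$ over $K$, the $j$-th block has valuation $\min_i v(c_{ij})+v(\pi_j)$, these lie in distinct cosets of $\vg K$ and so are pairwise distinct, whence the block of least valuation cannot be cancelled and all $c_{ij}$ vanish; maximizing $r$ and $s$ gives the bound. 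Finally, for the filtration I would let $\ell_0\subseteq\rf L$ be the separable closure of $\rf K$ in $\rf L$, use Hensel's lemma to produce the unique unramified subextension $L^\unr/K$ of $L$ with residue field $\ell_0$ (so $\vg L^\unr=\vg K$ and $[L^\unr:K]=[\ell_0:\rf K]$), and note that any unramified subextension has separable residue field over $\rf K$, hence sits inside $L^\unr$; this gives its maximality, the residue parts of (ii), and the $\vg$-parts of (iii) for $L^\unr$. Then I would set $L^\tame$ equal to the compositum inside $L$ of $L^\unr$ with the radical extensions $L^\unr(\pi^{1/m})$, $\pi\in(L^\unr)^\times$ and $m$ prime to $p:=\mathrm{char}\,\rf K$ (no condition if $p=0$), check that $L^\tame/L^\unr$ is totally tamely ramified with $[L^\tame:L^\unr]=[\vg L^\tame:\vg L^\unr]$ and $\rf L^\tame=\ell_0$, and invoke Ostrowski's lemma on the defect of henselian extensions to conclude that $[L:L^\tame]$ is a power of $p$; in particular $L=L^\tame$ when $p=0$. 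Assembling these statements yields the chain of inclusions $K\subseteq L^\unr\subseteq L^\tame\subseteq L$ and all of the remaining properties in (ii)--(iv).
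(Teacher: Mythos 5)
Your reconstruction of everything except part (b) follows exactly the standard development that the paper simply cites (Chapter II of Neukirch): the norm formula plus Hensel's lemma for existence of the extension, equivalence of norms on finite-dimensional spaces over a complete field for uniqueness and for (a), the residue/value-group independence argument for \eqref{fund ineq}, Hensel lifting of the separable closure of $\rf K$ in $\rf L$ for $L^{\unr}$, prime-to-$p$ radical extensions for $L^{\tame}$, and Ostrowski's defect theorem for (iv). These parts are fine. (Note that your chain $K\subseteq L^{\unr}\subseteq L^{\tame}\subseteq L$ is the correct one, consistent with items (ii) and (iii); the inclusions printed in item (i) of the statement are in the opposite order, which must be a typo.)

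The genuine divergence, and the genuine gap, is in (b). Your Baire-category argument disposes of countably generated infinite extensions, and the reduction of the general case via Krasner is a reasonable strategy, different from the paper's (which extrapolates Neukirch's max-norm equivalence from finite to infinite extensions). But what you dismiss as ``inseparable bookkeeping'' is not bookkeeping: Krasner's lemma only shows that the henselian field $M$ is \emph{separably} algebraically closed in its closure $\overline{M}\subseteq L$, and it cannot exclude that $\overline{M}/M$ is a nontrivial purely inseparable (immediate, defect) extension. Indeed, part (b) is false as stated for inseparable extensions: take $K=k((t))$ with $k=\F_p(x_1,x_2,\dots)$, so that $[k^{1/p}:k]=\infty$; then $L:=K^{1/p}=k^{1/p}((t^{1/p}))$ is an infinite, purely inseparable, algebraic extension of $K$ (every element of $L$ has its $p$-th power in $K$), and $L$ is complete for the unique extended valuation, being itself a Laurent series field. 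So no argument can close the case you left open. Both the statement of (b) and your proof of it are correct under the additional hypothesis that $L/K$ is separable, since then every $\gamma\in\overline{M}\setminus M$ is automatically separable over $M$ and Krasner applies; this is all the paper actually needs, because (b) is invoked only for the algebraic closure of $\La^{\min}$, which is perfect by Proposition \ref{prop: La} (i), so that extension is separable. (The paper's own justification of (b) suffers from the same defect: the equivalence with the max norm holds on each finite subextension with constants depending on that subextension, and is not uniform over an infinite extension; the example above is a witness.)
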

\begin{proof}
We consult Neukirch \cite{Neuk};
the referred statements in what follows are all from Chapter II of this book.

The unique extension is by Theorem 4.8.
By Proposition 4.9 the extended valuation is equivalent to the $\ell^\iy$ norm on a {\it direct sum} of $K,$
whose elements have only finitely many non-zero components even if they are infinite dimensional. 
So they are complete if and only if finite-dimensional. 
 
The inequality \eq{fund ineq} is from Proposition 6.8.
The field $L^\unr$ is defined in Definition 7.4, and $L^\tame$ in Definition 7.10 but under the hypothesis that $\rf K$ has positive characteristic;
if $\rf K$ has characteristic $0$ we define $L^\tame:=L.$
The parts (i), (ii) and (iv) are then obvious.
The part (iii) follows from Proposition 7.7; this is stated again in the positive-characteristic case,
but its proof applies also to the characteristic-zero case.
\end{proof}

Here the extension $L/L^\tame$ is called the {\it wild} part of $L/K,$
which is in general difficult to study.
But it has degree a power of $p;$ and for {\it Galois} extensions of such a degree, there is a classification result:
\begin{prop}[Artin--Schreier]\l{prop: Artin--Schreier}
Let $K$ be any field of characteristic $p.$
Then for every $a\in K$ either:
\iz
\item[\bf(i)]
the polynomial $f_a=f_a(x):=x^p-x-a\in K[x]$ has a root $b\in K,$
in which case its roots $b,b+1,\cdots,b+p-1$ are all in $K;$ or
\item[\bf(ii)]
$f_a\in K[x]$ is irreducible, in which case $K[x]/f_a$
is a degree-$p$ Galois extension of $K.$
\iz
Conversely:
\iz
\item[\bf(iii)] every degree-$p$ Galois extension of $K$ is of the form $K[x]/f_a$ for some $a\in K;$ or more generally
\item[\bf(iv)] every finite Galois extension of $K$ of degree equal to a power of $p$ is a repeated extension of this kind.
\iz
\end{prop}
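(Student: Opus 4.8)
The plan is to reduce the whole statement to the classical degree-$p$ case and then to a standard filtration argument for prime-power Galois extensions. First I would treat parts (i)--(iii), which are the genuine content. Fix $a \in K$ and consider $f_a = x^p - x - a$. The key observation is that the additive operator $\wp(y) := y^p - y$ (the Artin--Schreier map) is $\F_p$-linear on any field of characteristic $p$ and has kernel exactly $\F_p \subset K$. Hence if $b$ is any root of $f_a$ in a fixed algebraic closure, then $b + c$ for $c \in \F_p$ are precisely the $p$ roots of $f_a$, since $\wp(b+c) = \wp(b) + \wp(c) = a + 0 = a$. This immediately gives (i): if one root lies in $K$, they all do, as $\F_p \subset K$. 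For (ii), suppose no root lies in $K$; I would argue $f_a$ is irreducible by showing any monic factor $g \in K[x]$ of degree $d$, $0 < d < p$, is impossible: the coefficient of $x^{d-1}$ in $g$ is (up to sign) the sum of $d$ of the roots, i.e. $db + (\text{element of } \F_p)$; since $0 < d < p$ we have $d$ invertible in $\F_p$, so this forces $b \in K$, a contradiction. Thus $f_a$ is irreducible, $L := K[x]/f_a$ has degree $p$, and it is Galois because it is the splitting field (all roots $b, b+1, \dots, b+p-1$ lie in $L$ once $b$ does) and separable ($f_a' = -1 \ne 0$); the Galois group is cyclic of order $p$, generated by $b \mapsto b+1$.

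Next, part (iii): let $L/K$ be a degree-$p$ Galois extension with $\mathrm{Gal}(L/K) = \langle \sigma \rangle$. I would produce the Artin--Schreier generator using additive Hilbert 90 (the additive form of the normal basis / trace argument): since $\mathrm{Tr}_{L/K}$ is a nonzero $K$-linear functional, pick $\theta \in L$ with $\mathrm{Tr}_{L/K}(\theta) \ne 0$; after rescaling we may assume $\mathrm{Tr}_{L/K}(\theta) = -1$, and set
\e b := \sum_{i=1}^{p-1} i\, \sigma^i(\theta). \e
A direct computation gives $\sigma(b) - b = \mathrm{Tr}_{L/K}(\theta) \cdot (\text{correction}) = b + 1$ after collecting terms, hence $\sigma(b) = b + 1$, so $b \notin K$ and $L = K(b)$; moreover $a := b^p - b$ satisfies $\sigma(a) = (b+1)^p - (b+1) = b^p - b = a$, so $a \in K$ and $b$ is a root of $f_a$, giving $L \cong K[x]/f_a$. (I should double-check the exact combinatorial identity for $\sigma(b)-b$; this is the one routine calculation I would actually carry out carefully, using $\sum_{i=0}^{p-1}\sigma^i(\theta) = \mathrm{Tr}_{L/K}(\theta)$.)

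Finally, part (iv): let $L/K$ be a finite Galois extension with $[L:K] = p^m$. Then $G := \mathrm{Gal}(L/K)$ is a finite $p$-group, hence nilpotent, so it admits a chief series $1 = G_0 \triangleleft G_1 \triangleleft \cdots \triangleleft G_m = G$ with each successive quotient $G_{i+1}/G_i$ of order $p$ and central in $G/G_i$. By the Galois correspondence the fixed fields $K = F_m \subset F_{m-1} \subset \cdots \subset F_0 = L$ give a tower in which each $F_{i-1}/F_i$ is Galois of degree $p$, hence, by part (iii) applied over $F_i$ (which again has characteristic $p$), is an Artin--Schreier extension $F_i[x]/f_{a_i}$ for some $a_i \in F_i$. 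This exhibits $L/K$ as a repeated extension of the stated kind, completing the proof.

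I expect the main obstacle to be part (iii): getting the explicit Artin--Schreier element and verifying $\sigma(b) = b+1$ requires the additive Hilbert 90 / trace-nondegeneracy input and a small but error-prone index manipulation; everything else (the $\F_p$-linearity of $\wp$ in (i)--(ii), and the $p$-group filtration in (iv)) is formal. Since the paper only needs the \emph{existence} of such a tower and not canonical generators, one could alternatively cite Neukirch \cite{Neuk} for (iii)--(iv), but the self-contained argument above is short enough to include.
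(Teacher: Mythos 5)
Your proposal is correct, and it is essentially the paper's proof written out in full: the paper simply cites Lang for (i)--(iii) (whose proof is exactly your $\wp$-linearity, coefficient-of-$x^{d-1}$, and additive-Hilbert-90 arguments), and for (iv) invokes the same normal series of a finite $p$-group with successive quotients of order $p$ that you use. (Your computation does check out: $\sigma(b)-b=-\mathrm{Tr}_{L/K}(\theta)=1$ with $b=\sum_{i=1}^{p-1}i\,\sigma^i(\theta)$.)
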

\begin{proof}
For (i)--(iii) consult for instance Lang \cite[Chapter VI, Theorem 6.4]{Lang};
and then for (iv) use the following fact \cite[Chapter I, Corollary 6.6]{Lang}:
for every finite group $G$ of order a power of $p$ there exists a normal series
\e G=G_0\supseteq G_1\supseteq \cdots \supseteq G_n=\{1\}\e
with $[G_i:G_{i-1}]=p$ for every $i.$
\end{proof}

Finally we recall:
\begin{lem}[Krasner]\l{lem: Krasn}
Every valued field $K$ has a complete algebraically-closed extension.
\end{lem}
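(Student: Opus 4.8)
The statement to prove is Krasner's Lemma (our Lemma~\ref{lem: Krasn}): every valued field $K$ admits a complete algebraically-closed extension. The strategy is the classical two-step construction: first pass to a completion, then to an algebraic closure, then complete again, and iterate transfinitely if necessary, using Proposition~\ref{prop: ram} to control what happens at each stage.

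\textbf{Step 1: Completion.} First I would form the completion $\hat K$ of $K$ with respect to the norm $|\bl|$. Concretely, $\hat K$ is the fraction field of the ring of Cauchy sequences in $K^0$ modulo null sequences; the valuation extends uniquely and continuously, and $\hat K$ is complete by construction. Note $\vg \hat K = \vg K$ and $\rf \hat K = \rf K$, and $K$ is dense in $\hat K$.

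\textbf{Step 2: Algebraic closure.} Next I would take an algebraic closure $\overline{\hat K}$ of $\hat K$. By the uniqueness of the extension of a valuation along algebraic extensions --- which holds over the \emph{complete} field $\hat K$ by Proposition~\ref{prop: ram} (the valuation extends uniquely to any algebraic extension) --- the field $\overline{\hat K}$ carries a canonical valuation extending that of $\hat K$. The catch, flagged already in part (b) of Proposition~\ref{prop: ram}, is that $\overline{\hat K}/\hat K$ has infinite degree (unless $\hat K$ was already algebraically closed), so the valuation on $\overline{\hat K}$ is \emph{not} complete. Hence one more step is needed.

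\textbf{Step 3: Iterate and take a limit.} Set $K_0 := K$ and, given $K_\alpha$, let $K_{\alpha+1}$ be the completion of an algebraic closure of $K_\alpha$ (obtained by applying Steps 1--2); at limit ordinals $\lambda$ put $K_\lambda := \bigcup_{\alpha<\lambda} K_\alpha$, equipped with the union valuation, and then complete. I would then argue that this process stabilises: the field $L := K_{\omega_1}$ (or a suitable uncountable limit) is both complete and algebraically closed. Completeness is immediate at successor stages and survives the final completion; for algebraic closedness, any polynomial over $L$ has coefficients in some $K_\alpha$ with $\alpha$ countable (if $L$ arose as a countable union; in general use a cardinality bound on the coefficients), hence splits in $K_{\alpha+1}\subseteq L$. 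Finally $L$ extends $K$ since $K = K_0 \subseteq L$.

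\textbf{Main obstacle.} The delicate point is the interplay between ``complete'' and ``algebraically closed'': each operation can destroy the other, so the argument genuinely needs the transfinite iteration rather than a single pass, and one must verify that a limit field of the right cardinality is simultaneously fixed by both operations. This is exactly where a cardinality/cofinality estimate is required: one picks an ordinal of cofinality exceeding the cardinality of any polynomial's coefficient set, so that algebraic closure is achieved ``in the limit'' while completeness is preserved by a final completion. (A slicker alternative, if one prefers to avoid the transfinite bookkeeping, is to quote the standard fact that the completion of an algebraically closed valued field is again algebraically closed --- which would let Steps~1--2, applied once to $\hat K$ then completed, already finish the proof --- but since the excerpt does not state that fact, I would include the iteration argument for self-containedness, or cite Neukirch~\cite{Neuk} for it.)
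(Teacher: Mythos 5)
Your proof is correct in substance but takes a genuinely different, and considerably longer, route than the paper. The paper's proof is precisely the ``slicker alternative'' you mention in your last sentence: it extends the valuation to an algebraic closure $L$ of $K$ (possible since $L$ is a union of finite subextensions) and then invokes the classical Krasner's lemma --- the fact that the completion of an algebraically closed valued field remains algebraically closed, cited to Neukirch \cite[Chapter II, \S6, Exercise 2]{Neuk} --- so a single completion of $L$ finishes the job; no transfinite iteration appears. Your iteration replaces that citation with a self-contained cofinality argument, which is a legitimate trade-off (self-containedness at the cost of length). One point to tighten: the cardinality that forces you to stop at an ordinal of uncountable cofinality is not that of polynomial coefficient sets --- these are finite, so algebraic closedness passes to the union over \emph{any} limit ordinal --- but that of Cauchy sequences, which are countable. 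Since $\mathrm{cf}(\omega_1)=\omega_1>\omega$, every Cauchy sequence in $\bigcup_{\alpha<\omega_1}K_\alpha$ already lies in some complete $K_{\beta+1}$, so the union at stage $\omega_1$ is itself complete and no final completion is needed; performing one would in principle adjoin elements lying in no $K_\alpha$ and thereby undercut your argument that every polynomial over the limit field has coefficients captured at an earlier stage.
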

\begin{proof}
Let $L$ be the algebraic closure of $K.$
The valuation extends uniquely to $L$ by the first assertion in Proposition \ref{prop: ram},
because $L$ is the union of finite subextensions of $K.$
The completion of this valued field $L$ is algebraically closed, by Krasner's lemma \cite[Chapter II, \S6, Exercise 2]{Neuk}.
\end{proof}

We turn now to the second method. 
Let $\k$ be a field. In Definition \ref{dfn: Novikov} we have defined the minimum Novikov field $\La^{\min}$ with ground field $\k.$
We define the {\it maximum} Novikov field by
\e
\l{La2}
\La^{\max}:=\left\{a=\sum_{\ga\in\R}a_\ga T^\ga: a_\ga\in \k\text{ and }\supp a\sb\R\text{ well-ordered}\right\}
\e
where $\supp a:=\{\ga\in\R:a_\ga\ne0\}.$
Recall that $\supp a$ being well ordered means that every non-empty subset has a minimum. 
In particular, $\supp a$ is {\it half discrete} in the sense that
for every $\ga\in\supp a$ 
there exists $\ep>0$ such that
\e\supp a\cap(\ga,\ga+\ep)=\es;\e
because we can take $\ga+\ep$ to be the minimum of $\supp a\cap(\ga,\iy).$
The sums and products in $\La^{\max}$ are defined in the natural way: for $a,b\in\La^{\max}$ we can define
\e
\l{a+b} a+b:=\sum_{\ga\in \supp a\cup\supp b}(a_\ga+b_\ga)T^\ga
\e
because $\supp a\cup\supp b$ is well ordered; and
\e
\l{ab} ab:=\sum_\ga\left(\sum_{\al+\be=\ga}a_\al b_\be\right) T^\ga
\e
because $\sum_{\al+\be=\ga}$ is finite, which follows since $\supp a,\supp b$ are half discrete. 

The valuation $\v:\La^{\max}\to(-\iy,+\iy]$ is defined by setting $\v0:=+\iy$ and by taking in \eq{La2} the least $\ga\in\supp a.$
Then $(\La^{\max},\v)$ is a complete valued field.
We have
\e\rf \La^{\min}\cong\rf\La^{\max}\cong\k\text{ and }\vg \La^{\min}\cong\vg\La^{\max}=\R.\e
We prove now the following proposition. Part (ii) with $\k$ algebraically closed is the result we have promised to prove. Recall that a field $\La$ is a {\it perfect field} if either this has characteristic $p=0$ or the ring homomorphism $\La\to \La$ defined by $x\mapsto x^p$ is surjective.
\begin{prop}\l{prop: La}
\iz
\item[\bf(i)]
$\La^{\min},\La^{\max}$ are both perfect fields.
\item[\bf(ii)]
$\La^{\max}$ is maximal in Krull's sense {\rm\cite{Krull};} that is, $\La^{\max}$ has no non-trivial valued field extension $K$ with $\rf K\cong\k$ and $\vg K=\R.$
In particular, if $\k$ is algebraically closed, so is $\La^{\max}.$ 
\item[\bf(iii)]
If $\k$ is algebraically closed of characteristic $0,$ so is $\La^{\min}.$
\\
If $\k$ has positive characteristic $p$ then $\La^{\min}$ is not algebraically closed,
and its algebraic closure is strictly not complete.
\iz
\end{prop}
\begin{rem}
These results do not assert that $\La^{\max}$ is the algebraic closure of $\La^{\min}.$ For rational-power series, the algebraic closure is described in Kedlaya \cite{Kedl}.
\end{rem}

\begin{proof}[ Proof of Proposition $\ref{prop: La}$]
For (i) 
let $\La$ be either $\La^{\min}$ or $\La^{\max}.$ We suppose $p>0$ and prove that for every $a\in\La$ there exists $b\in\La$ with $b^p=a.$
For $a=0$ we can take $a=0.$
For $a\ne0$ we can write $a=\sum_{\ga\in\supp a} a_\ga T^\ga,$ $a_\ga\in\k.$ For each $\ga\in\supp a$ choose a root $b_\ga\in \k$ of the polynomial $x^p-a_\ga\in \k[x],$ which exists because $\k$ is algebraically closed.
Put $b:= \sum_{p\ga\in\supp a}b_{\ga/p} T^\ga.$ As in \eq{ab} the binomial expansion is valid so that 
\e b^p=  \sum_{\substack{p\de_1,\dots,p\de_p\in \supp a\\ \de_1+\dots+\de_p=\ga}}b_{\de_1}\cdots b_{\de_p} T^{\ga}=\sum_{\substack{p\de_1,\dots,p\de_p\in\supp a\\ \de_1=\dots=\de_p=\ga/p}}b_{\de_1}\cdots b_{\de_p} T^{\ga}=\sum_{\ga\in\supp a}b_{\ga/p}^p T^{\ga}=a\e
where the second equality follows since $\k$ has characteristic $p>0.$ Thus $b$ is an element we want.

The former part of (ii) is an old result \cite{Kap,Krull}.
For the latter, if $\k$ is algebraically closed, then $\rf\La\cong \k$ is algebraically maximal and $\vg\La=\R$ is already maximal in $\R;$
so $\La$ has no non-trivial finite extension, or equivalently $\La$ is algebraically closed. 

The former part of (iii) is proved by Fukaya, Oh, Ohta and Ono \cite[Lemma A.1]{FOOO2} but we give another proof, using Proposition \ref{prop: ram} with $K=\La^{\min}.$ Suppose therefore that $L/K$ is a finite extension, and note that:
\iz
\item
since $\rf K$ is algebraically closed it follows that the inclusions in (ii) are all equalities, so $K=L^\unr;$
\item
since $\vg K=\R$ it follows that the inclusions in (iii) are all equalities, so $L^\unr=L^\tame;$ and
\item
$L=L^\tame$ because $\rf K$ has characteristic zero.
\iz
Consequently, $K=L;$ that is, $K=\La$ has no non-trivial finite extension, or equivalently, $K$ is algebraically closed.  

For the latter part of (iii) let $\ga<0$ and consider the Artin--Schreier polynomial
\e\l{ArtSch} x^p-x-T^\ga\in\La^{\min}[x].\e
If this had a root of the form $x=\sum_{n=1}^\iy x_nT^{\ga_n}$ then by induction on $n$ it must be of the form 
\e\l{root}\sum_{n=1}^\iy T^{-\ga/p^n}\in\La^{\max}\-\La^{\min};\e
so $\La^{\min}$ is not algebraically closed.

We show next that the algebraic closure of $\La^{\min}$ has infinite degree over $\La^{\min}.$
By an {\it Artin--Schreier} element over $K:=\La^{\min}$ we shall mean a root of an Artin--Schreier polynomial over $K.$
Let $L/K$ be the field extension which contains all the Artin--Schreier elements over $K.$
We prove the following three facts in this order: 
\iz
\item[\bf(I)]
if $a_0,a_1,a_2\in K$ with $a_0=a_1+a_2$ and if $\al_1,\al_2,\al_3\in L$ are respectively roots of the  
polynomials $x^p-x-a_i\in K[x],$ $i=0,1,2,$ then 
$K(\al_0)\sb K(\al_1,\al_2);$
\item[\bf(II)]
if $\al$ is an Artin--Schreier element over $K$ then
$ K(\al)\sb K(\be;\de_1,\cdots,\de_n)$
where $\be$ is a root of the polynomial $x^p-x-b\in K[x]$ for some $b\in K$ with $\v b\ge0,$
and every $\de_i$ a root of the polynomial $x^p-x-c_iT^{\ga_i}\in K[x]$ for some $c_i\in\k$ and $\ga_i<0;$ and
\item[\bf(III)]
if the algebraic closure of $\La^{\min}$ had {\it finite} degree over $\La^{\min},$
we should have
$L=K(\be_1,\cdots,\be_m;\de_1,\cdots,\de_n)$
where $\be_1,\cdots,\be_m$ are such as $\be$ in (II) and $\de_1,\cdots,\de_n$ are under the same condition as in (II) but that $n$ may of course be different. 
\iz
For (I) note that $\al_1+\al_2$ is a root of the polynomial $x^p-x-a_0\in K[x],$ so $\al_0=\al_1+\al_2+c$ for some $c\in\{0,1,\cdots,p-1\},$
which implies $K(\al_0)\sb K(\al_1,\al_2)$ as we want.
For (II) write $\al$ as a root of the polynomial $x^p-x-a\in K[x],$ $a\in K=\La^{\min},$ and recall that $a$ has by definition only finitely many terms of negative powers with respect to $T;$ that is, $a=b+\sum c_iT^{\ga_i},$ $\v b\ge0$ and $\ga_i<0.$
Applying to this the part (I) repeatedly, we see that $K(\al)\sb K(\be;\de_1,\cdots,\de_n)$ as we want.
For (III) it is clear from definition that
$L\supseteq K(\be_1,\cdots,\be_m;\de_1,\cdots,\de_n).$
On the other hand, by our hypothesis $L/K$ is a finite extension and so is obtained from $K$ by adding finite Artin--Schreier elements.
Hence using (II) repeatedly we get the other inclusion we want.

Now in (III) write $\de_i$ as a root of the polynomial $x^p-x-c_iT^{\ga_i}\in K[x]$ for some $c_i\in\k$ and $\ga_i<0.$
Then $\de_i=\sum_{n=1}^\iy c_i^{1/p^n}T^{\ga_i/p^n}$ for some $c_i^{1/p^n}\in\ov\k,$ the algebraic closure of $\k,$ so
\e\l{Art4} L\sb\left\{\sum a_\ga T^\ga:a_\ga\in\ov\k\text{ and if $\ga<0$ then $\ga\in \Q(p;\ga_1,\cdots,\ga_n)$}\right\}.\e
But as $\R/\Q$ is an infinite extension there is some $\ep<0$ that is not in $\Q(p;\ga_1,\cdots,\ga_n).$
Then by \eq{Art4} the Artin--Schreier element $\sum_{n=1}^\iy T^{\ep/p^n},$ a root of the polynomial $x^p-x-T^{\ep}\in K[x],$
is not in $L,$ which contradicts the definition of $L.$

So the algebraic closure of $\La^{\min}$ has infinite degree over $\La^{\min}$ and by Proposition \ref{prop: ram} (b) it is strictly not complete.  
\end{proof}
\begin{rem}
The r\^ole of Artin--Schreier polynomials is essential to the proof above.
In fact, if $\k$ is algebraically closed then every finite {\it normal} extension $L/\La^{\min}$ is a repeated Artin--Schreier extension, as we show now.
In the notation of Proposition \ref{prop: ram} with $K=\La^{\min},$ we have again $K=L^\tame.$
On the other hand, by (i) above, $L/L^\tame$ is automatically Galois.
To this we can apply Proposition \ref{prop: Artin--Schreier} so $L/\La^{\min}$ is a repeated Artin--Schreier extension as we want.
\end{rem}

\subsection{Gapped $A_\iy$ Categories}\l{sec: gap}
We begin by defining the notion of valued vector spaces. As in Definition \ref{dfn: HF} we will use these to define filtered local systems of Lagrangians. 
\begin{dfn}\l{dfn: val vect}
Let $(\La,\v)$ be a Novikov field. By a {\it valued $\La$-vector space} we mean the pair $(E,\v)$ of a $\La$-vector space $E$ and a function $\v:E\to(-\iy,+\iy],$ called also the {\it valuation,} such that
$\v^{-1}(+\iy)=\{0\};$ for $x,y\in E$ we have $\v(x+y) \ge \min\{\v x,\v y\};$ and 
for $a\in\La,\ x\in E$ we have $\v(a x)=\v a+\v x$ where we use both $\v:\La\to(-\iy,\iy]$ and $\v:E\to(-\iy,\iy].$

Define a function $|\,\,|:E\to[0,\iy)$ by $|x|:=e^{-\v x}$ for $x\in E.$ Then the strong triangle inequality $|x+y|\le\max\{|x|,|y|\}$ holds for $x,y \in E;$ and in particular, there is a distance function ${\rm d}:E\times E\to[0,\iy)$ defined by $(x,y)\mapsto|x-y|,$ making $E$ into a metric space $(E,{\rm d}).$
We call $(E,\v)$ a {\it complete} valued vector space if $(E,{\rm d})$ is a complete metric space.
The valued vector space structure of $(E,\v)$ extends to the metric space completion of $(E,{\rm d}),$ making it into a complete valued vector space. We call this the {\it completion} of $(E,\v).$

Let $(E,\v),(F,\v)$ be valued $\La$-vector spaces and $\al:E\to F$ a $\La$-linear map which is continuous with respect to the metric space structures above.
Then there exists $\v\al\in(-\iy,\iy]$ by $\v\al:=\inf_{x\in E} (\v (\al x)-\v x);$ that is, $|\al|:=e^{-\v\al}$ is the operator norm of the bounded operator $\al:(E,|\,\,|)\to(F,|\,\,|).$
We say that $\al$ is {\it filtered} if $\v\al\ge0.$
We call it a {\it filtered isomorphism} if $\al$ is a linear isomorphism with $\al,\al^{-1}$ filtered. The latter is equivalent to $\v\al=\v(\al^{-1})=0$ or to $|\al|=|\al^{-1}|=1$ because $|\al\be|\le|\al||\be|$ for linear maps $\al,\be$ between valued vector spaces. 

Let $(E_1,\v),\dots,(E_d,\v),(F,\v)$ be valued $\La$-vector spaces and $\al:E_1\times\dots\times E_k\to F$ a $\La$ multi-linear map.
Define then $\v\al\in[-\iy,\iy]$ by
\e \v\al:=\ts\inf_{(x_1,\dots,x_k)\in E_1\times\dots\times E_k} (\v[\al (x_1,\dots,x_k)]-\v x_1-\dots-\v x_k).\e
We say that $\al$ is {\it filtered} if $\v\al\ge0.$
We denote by $\hom(E_1,\dots,E_k;F)$ the set of filtered $\La$ multi-linear maps from $E_1\times\dots\times E_k$ to $F.$
The pair $(\hom(E_1,\dots,E_k;F),\v)$ has the obvious structure of a valued $\La$-vector space.
If $(F,\v)$ is complete then so is $(\hom(E_1,\dots,E_k;F),\v).$

Let $(E_i,\v)_{i\in I}$ be a family of complete valued $\La$-vector spaces. Then there exists a valuation $\bigoplus_{i\in I} E_i\to(-\iy,\iy]$ defined by $(x_i)_{i\in I}\mapsto \min_{i\in I}\v x_i.$
The {\it complete direct sum} $(\wh\bigoplus_{i\in I}E_i,\v)$ is the completion of the direct sum $(\bop_{i\in I}E_i,\v).$
\end{dfn}

We give a basic example of finite-dimensional valued $\La$-vector space.
\begin{exam}\l{ex: ell inf}
Let $n>0$ be an integer and $(E,\v)$ an $n$-dimensional valued $\La$-vector space. We give $E$ another valuation. Let $\bm e:=\{e_1,\dots,e_n\}\subset E$ be a basis with $\v e_1=\dots=\v e_n=0,$ which exists because $T^\ga\in\La$ for every $\ga\in\R.$
Then $E$ has a valuation $\v_{\bm e}:E\to(-\iy,\iy]$ defined by $x=x_1e_1+\dots+x_n e_n\mapsto\min\{\v x_1,\dots,\v x_n\}$ for $x_1,\dots,x_n\in\La.$ Note that by the strong triangle inequality we have $\v x\ge \v_{\bm e} x$ for every $x\in E.$ We call $\v$ the {\it $\ell^\iy$ valuation with respect to $\bm e$} if the equality $\v x=\v_{\bm e}x$ holds for every $x\in E.$ 

More generally, when the basis $\bm e$ is not specified, we call $\v$ plainly an {\it $\ell^\iy$ valuation} if it is so with respect some basis $\bm e$ of $E$ with $\v e_1=\dots=\v e_n=0.$ Recall from Definition \ref{dfn: Novikov} that we have the subring $\La^0\subset\La.$ Regard $E$ as an $\La^0$ module and define a $\La^0$ submodule $E^0\subset E$ by $E^0:=\{e\in E:\v e\ge 0\}.$ Then $\v$ is an $\ell^\iy$ valuation if and only if $E^0$ is a free $\La^0$ module. More precisely, given a finite set $\bm e\subset E^0$ with $\v e_1=\dots=\v e_n=0,$ we have $\v=\v_{\bm e}$ if and only if $E^0$ is the free $\La^0$ module with basis $\bm e.$ 

We give two other equivalent conditions. Note that $\La^+E^0:=\{ae: a\in\La^+,\ e\in E^0\}$ is a $\La^0$ submodule of $E^0$ and the quotient $\bar E:=E^0/\La^+E^0$ has the vector space structure over $\La^0/\La^+=:\k.$
We prove that the following three conditions are equivalent: {\bf(i)} $E^0$ is a rank-$n$ free $\La^0$ module; {\bf(ii)} $E^0$ is a finitely generated $\La^0$ module; and {\bf(iii)} the $\k$-vector space $E^0/\La^+E^0$ has dimension $\ge n.$
Clearly (i) implies (ii). We prove that (ii) implies (iii). Take $e_1,\dots,e_p\in E^0$ such that their images $\bar e_1,\dots,\bar e_p\in E^0/\La^+E^0$ are a $\k$-basis. Then as $E^0$ is finitely generated we can use Nakayama's lemma \cite[Chapter X, Lemma 4.3]{Lang} (for non-Noetherian local rings) which implies $E^0=\La^0e_1+\dots+\La^0e_p.$ So $E=\La e_1+\dots+\La e_p$ and $p\ge n,$ which is the condition (iii).
We prove now that (iii) implies (i). Take $e_1,\dots,e_n\in E^0$ such that their images $\bar e_1,\dots,\bar e_n\in E^0/\La^+E^0$ are linearly independent over $\k.$
Then $F:=\La^0e_1+\dots+\La^0e_n\sb E^0$ and we show that the equality holds.
Otherwise there is some $x\in E^0\-F.$
By re-ordering $e_1,\dots,e_n$ we can write $x=a_1T^{\ga_1}e_1+\dots+a_mT^{\ga_m}e_m$ where $m\le n,$ $a_1,\dots,a_m\in E\-\{0\}$ with $\v a_1=\dots=\v a_m=0,$ $\ga_1=\dots=\ga_l<\ga_{l+1}\le\dots\le\ga_m.$
Then
\[T^{-\ga_1}x=a_1e_1+\dots+a_le_l+a_{l+1}T^{\ga_{l+1}-\ga_1}+\dots+a_mT^{\ga_m-\ga_1}e_m\]
and $T^{-\ga_1}x,a_{l+1}T^{\ga_{l+1}-\ga_1}+\dots+a_mT^{\ga_m-\ga_1}e_m$ have positive valuation. So $\bar a_1\bar e_1+\dots
+\bar a_l\bar e_l=0$ in $\bar E.$ But none of $\bar a_1,\dots,\bar a_l$ is zero, which contradicts that $\bar e_1,\dots,\bar e_n$ are linearly independent. Thus (i)--(iii) are equivalent.
\end{exam}

\begin{rem}\l{rem: ell inf}
Consider the category whose objects are $n$-dimensional valued $\La$-vector spaces, whose morphisms are filtered homomorphisms.  
Then there is an equivalence from the category of rank-$n$ free $\La^0$ modules to that of $n$-dimensional valued vector spaces with $\ell^\iy$ valuations. The function between the object sets is defined by $F\mapsto F\otimes_{\La^0}\La.$ Each morphism $\ph\in\hom(F,F')$ is mapped to $\ph\otimes\id.$

When $n=1$ every valuation is the $\ell^\iy$ valuation. So the category of rank-one free $\La^0$ modules is equivalent to that of one-dimensional valued vector spaces. 
\end{rem}
We define then the filtered local systems. 
\begin{dfn}\l{dfn: loc}
By a {\it filtered} local system over a topological space we mean one of non-zero finite-dimensional valued vector spaces whose parallel transports are filtered homomorphisms of valued vector spaces.
\end{dfn}

We make now the definitions about discrete sub-monoids of $[0,\iy).$ These will be concerned with the areas of pseudo-holomorphic curves with Lagrangian boundary conditions.
\begin{dfn}
Denote by $\N$ the set of non-negative integers.
Let $\Ga\subset[0,\iy)$ be a discrete {\it sub-monoid,} that is, a discrete subset containing $0$ and closed under addition. 
By a {\it decomposition} of $(k,\ga)\in(\N\t\Ga)\-\{(0,0)\}$ we mean $l\in\N\-\{0\},$ $a=(a_0,\dots,a_l)\in\N^{\times (l+1)}$ with $0=a_0\le\dots\le a_l=k,$ and $\al=(\al_0,\dots,\al_l)\in\Ga^{\times (l+1)}$ with $\al_0+\dots+\al_l=\ga,$ such that $(l,\al_0),(a_1-a_0,\al_1),\dots,(a_l-a_{l-1},\al_l)\ne(0,0).$
By a {\it reduced} decomposition of $(k,\ga)$ we mean $p,q\in\N$ and $\al_0,\al_1\in\Ga$ with $0\le p\le q\le k,$ $\al_0+\al_1=\ga$ and $(k+p-q+1,\al_0),(q-p,\al_1)\in(\N\times\Ga)\-\{(0,0)\};$
see Equation \eq{A_iy} below for a typical formula where this notion is used.
Every decomposition $(l,a,\al)$ with $a_0=0,\dots,a_p=p$ and $a_{p+1}=q,\dots, a_l=q+l-p-1$ for some $p,q$ defines indeed the reduced decomposition $(p,q;\al_0,\al_1).$

Choose a total order $\le$ on $(\N\t\Ga)\-\{(0,0)\}$ such that there is an ordered set isomorphism $((\N\t\Ga)\-\{(0,0\},\le)\cong(\N,\le),$ which we denote by $(k,\ga)\mapsto [k,\ga],$ and if $(l,a,\al)$ are as above a decomposition of $(k,\ga)\in(\N\times\Ga)\-\{(0,0)\}$ then 
\e\l{kga} \max\{[l,\al_0],[a_1-a_0,\al_1],\dots,[a_l-a_{l-1},\al_l]\}\le [k,\ga]\e
with equality if and only if at least $l$ of $(l,\al_0),(a_1-a_0,\al_1),\dots,(a_l-a_{l-1},\al_l)$ are equal to $(1,0).$
Setting $(l,\al_0)=(k,\ga)$ and $(a_1-a_0,\al_1),\dots,(a_l-a_{l-1},\al_l)=(1,0)$ we get $(1,0)\le(k,\ga);$ that is, $(1,0)$ is the least element of $(\N\t\Ga)\-\{(0,0\}.$ In other words, $[k,\ga]=0$ if and only if $(k,\ga)=(1,0).$

We choose the order $\le$ for {\it every} discrete sub-monoid $\Ga\subset[0,\iy)$ so that if $\Ga,\De\subset[0,\iy)$ are two discrete sub-monoids with $\Ga\sb\De$ then the induced map $(\N\t\Ga)\-\{(0,0\}\to(\N\t\De)\-\{(0,0\}$ will preserve the orders. 
Here is an explicit way of doing this. Set $[0]:=0$ and for $\ga\in\Ga\-\{0\}$ set
\e\l{split Gamma}
[\ga]:=\max\{n\in\N:\ga=\ga_1+\cdots+\ga_n;\ \ga_1,\cdots,\ga_n\in\Ga\-\{0\}\}.\e
The maximum makes sense because $\Ga\sb[0,\iy)$ is discrete and has the least positive element.
It is clear that $[\ga+\de]\ge[\ga]+[\de]$ for $\ga,\de\in\Ga.$ For $(k,\ga),(l,\de)\in(\N\times\Ga)\-\{(0,0)\}$ define $(k,\ga)\le (l,\de)$ by either $k+[\ga]+\ga<l+[\de]+\de$ or $k+[\ga]+\ga=l+[\de]+\de$ and $\ga\le\de.$ Then we can check all the conditions above.
\end{dfn}

Let $(\La,\v)$ be a Novikov field, which we shall fix in this section.
We define the notion of gapped $A_N$ categories, $N\in\N\sqcup\{\iy\},$ over $(\La,\v).$
The morphisms spaces are $\Z$-graded.
The $A_\iy$ structures are {\it curved,} including the $\fm^0$ terms.  
\begin{dfn}\l{dfn: gap}
We begin with a {\it filtered} curved $A_\iy$ category $(\cA,\fm)$ which consists of an object set $\obj\cA,$ a family $(\cA_{XY},\v)_{X,Y\in\obj\cA}$ of $\Z$-graded complete valued $\La$-vector spaces, and a family
\e 
(\fm^k=\fm^k_{X_0\dots X_k}\in \hom^1(\cA_{X_0X_1}[1],\dots,\cA_{X_{k-1}X_k}[1];\cA_{X_0X_k}[1])),
\e
$k\in\N$ and $X_0,\dots,X_k\in\obj\cA,$ such that 
\e\l{A_iy}
0=\sum_{p,q}(-1)^\#\fm^{k+p-q+1}(x_1,\dots,x_p,\fm^{q-p}(x_{p+1},\dots,x_q),x_{q+1},\dots,x_k)
\e
where 
$x_1\in \cA_{X_0X_1}[1],\dots, x_k\in\cA_{X_{k-1}X_k}[1];$ $\#:=\deg x_1+\dots+\deg x_p;$ and $p,q$ are any integers with $0\le p\le q\le k.$
For $k=0$ we have $\fm^0_X\in \cA^1_{XX}$ for every $X\in\obj\cA,$ and \eq{A_iy} means $\fm^1_{XX}\fm^0_X=0\in\cA^2_{XX}.$

Suppose now given $N\in\N\sqcup\{\iy\}.$
By a {\it gapped} $A_N$ category $(\cA,\fm)$ we mean an object set $\obj\cA,$ a family $(\cA_{XY},\v)_{X,Y\in\obj\cA}$ of $\Z$-graded complete valued $\La$-vector spaces, a discrete sub-monoid $\Ga\subset[0,\iy)$ and a family
$(\fm^k_\ga=\fm^{k\ga}_{X_0\dots X_k}\in\hom^1(\cA_{X_0X_1}[1],\dots,\cA_{X_{k-1}X_k}[1];\cA_{X_0X_k}[1])),$
$(k,\ga)\in(\N\times\Ga)\-\{(0,0)\}$ with $[k,\ga]\le N$ and  $X_0,\dots,X_k\in\obj\cA,$ such that
\e\l{gapped A_N}
0=\sum_{p,q;\al,\be}(-1)^\#\fm_\al^{k+p-q+1}(x_1,\dots,x_a,\fm_\be^{q-p}(x_{p+1},\dots,x_q),x_{q+1},\dots,x_k)
\e
where $x_1,\dots,x_k,\#,p,q$ are as in \eq{A_iy} and the sum is taken over reduced decompositions $(p,q;\al,\be)$s of $(k,\ga).$ Here the hom space
\e\hom^1(\cA_{X_0X_1}[1],\dots,\cA_{X_{k-1}X_k}[1];\cA_{X_0X_k}[1]))\e
consists of {\it filtered} vector space homomorphisms (as in Definition \ref{dfn: val vect}); the superscript $1$ of $\hom^1$ refers to the degree, which has nothing to do with the filtrations. In \eq{gapped A_N} the operators $\fm_\al^{k+p-q+1},\fm_\be^{q-p}$ are well-defined because $\max\{[q-p,\al],[k+p-q+1,\be]\}\le N$ by \eq{kga}.

When $N=\iy$ the gapped $A_\iy$ category $(\cA,\fm)$ defines a filtered $A_\iy$ category of the same object set, the same morphism spaces, and the $\fm^k$ operators defined by $\fm^k_{X_0\dots X_k}:=\sum T^\ga \fm^{k\ga}_{X_0\dots X_k}$ where $\ga\in\Ga\-\{0\}$ for $k=0$ and $\ga\in\Ga$ for $k>0.$
The sum is well-defined because $\hom^1(\cA_{X_0X_1}[1],\dots,\cA_{X_{k-1}X_k}[1];\cA_{X_0X_k}[1])$ is complete.
\end{dfn}
\begin{rem}\l{rem: difference from FOOO}
The definition of $\hom^1(\cA_{X_0X_1}[1],\dots,\cA_{X_{k-1}X_k}[1];\cA_{X_0X_k}[1]))$ implies that every $\fm^{k\ga}_{X_0\dots X_k}$ is a {\it filtered} homomorphism between valued vector spaces.
This will be crucial to defining the Fukaya categories with filtered local systems.
In the references \cite{AJ,FOOO} they define $\fm^{k\ga}_{X_0\dots X_k}$ to be linear over the residue field $\La^0/\La^+,$ determined {\it uniquely} from the sum $\fm^k_{X_0\dots X_k};$ this is not true in our circumstances.

They also work with gapped $A_N$ {\it algebras} which are gapped $A_N$ categories with a single object; or they are expressible as the pair $(A,\fm)$ of a $\Z$-graded complete valued $\La$-vector space $A$ and a family $\fm=(\fm^k_\ga\in\hom^1(A,\dots,A;A)),$ $(k,\ga)\in(\N\times\Ga)\-\{(0,0)\}$ with $[k,\ga]\le N,$ satisfying the same equations. 
If $(\cA,\fm)$ is a gapped $A_N$ category then $\wh\bop_{X,Y\in\obj\cA}\cA_{XY},$ the complete direct sum, has a gapped $A_N$ algebra structure $\fm$ defined in the obvious way. Conversely, let $(A,\fm)$ be a gapped $A_N$ algebra with $A=\wh\bop_{X,Y\in\cX}\cA_{XY}$ such that the induced map $\fm^k_\ga:A_{X_1Y_1}\times\dots\times A_{X_kY_k}\to A$ is zero unless $Y_1=X_2,\dots,Y_{k-1}=X_k;$ and if this is the case then $\fm^k_\ga$ has image in $A_{X_1Y_k}.$ Then $\cX,(\cA_{XY})_{X,Y\in\cX}$ and $\fm$ define a gapped $A_N$ category.
\end{rem}

We define the truncations of gapped $A_N$ functors, $N\in\N\sqcup\{\iy\},$ and the full subcategories of gapped $A_N$ functors.
\begin{dfn}
Let $M,N\in\N\sqcup\{\iy\}$ with $M\le N$ and $(\cA,\fm)$ a gapped $A_N$ category.
The $A_M$ {\it truncation} of $(\cA,\fm)$ means the same object set $\obj\cA,$ the same morphism spaces $(\cA_{XY})_{X,Y\in\obj\cA}$ and the subfamily $(\fm^{k\ga}_{X_0\dots X_k})_{[k,\ga]\le M};$ these define a gapped $A_M$ category. 
By a {\it full subcategory} $(\cB,\fm)$ of $(\cA,\fm)$ we mean a subset $\obj\cB\sb\obj\cA,$ the subfamily $(\cA_{XY})_{X,Y\in\obj\cB}$ and the subfamily $(\fm^{k\ga}_{X_0\dots X_k})_{X_0,\dots,X_k\in\obj\cB};$ these define a gapped $A_N$ category.
\end{dfn}

We define the gapped $A_N$ functors, $N\in\N\sqcup\{\iy\}.$
\begin{dfn}
Let $\cA,\cB$ be two filtered $A_\iy$ categories.
By a {\it filtered $A_\iy$ functor} from $\cA$ to $\cB$ we mean a function $f:\cA\to\cB$ between the object sets and a family
\e (f^k=f^k_{X_0\dots X_k}\in\hom^0(\cA_{X_0X_1}[1],\dots,\cA_{X_{k-1}X_k}[1];\cA_{fX_0fX_k}[1])),\e
$k\in\N$ and $X_0,\dots,X_k\in\obj\cA,$ such that for every $X\in\obj\cA$ the element $f^0_X\in\cA_{fXfX}^0$ has positive valuation, that is,  $\v f^0_X>0;$ and for $X_0,\dots,X_k\in\obj\cA$ with $k\in\N$ we have
\e\l{nf}\begin{split}
\sum_{p,q}(-1)^\#f^{k+p-q+1}(x_1,\dots,x_p,\fm^{q-p}(x_{p+1},\dots,x_q),x_{q+1},\dots,x_k)\\
=\sum_{a_1,\dots,a_l}\fn^l(f^{a_1}(x_1,\dots,x_{a_1}),\dots,f^{a_l-a_{l-1}}(x_{a_{l-1}+1},\dots,x_{a_l})).
\end{split}\e
where $x_1,\dots,x_k,\#,p,q$ are as in \eq{A_iy} and $a_1,\dots,a_l$ are any integers with $0\le a_1\le\dots\le a_l=k.$
The right-hand side of \eq{nf} converges because the $f^0$ terms have positive valuation.

Suppose now that $(\cA,\fm),(\cB,\fn)$ are gapped $A_N$ categories, $N\in\N\sqcup\{\iy\}.$
Note that any two discrete sub-monoids of $[0,\iy)$ are contained in some single discrete sub-monoid of $[0,\iy).$
By a {\it gapped $A_N$ functor} from $\cA$ to $\cB$ we mean a function $f:\obj\cA\to\obj\cB$ between the object sets; a discrete sub-monoid $\Ga\subset[0,\iy)$ with respect to which $\cA,\cB$ are gapped; and a family
\e(f^k_\ga=f^{k\ga}_{X_0\dots X_k}) \in\hom^0(\cA_{X_0X_1}[1],\dots,\cA_{X_{k-1}X_k}[1];\cB_{fX_0fX_k}[1]))\e
$(k,\ga)\in(\N\t\Ga)\-\{(0,0)\}$ with $[k,\ga]\le N$ and $X_0,\dots,X_k\in\obj\cA,$ such that
\e\l{A_N funct}
\begin{split}
\sum_{p,q;\al,\be}(-1)^\#f_\al^{k+p-q+1}(x_1,\dots,x_p,\fm_\be^{q-p}(x_{p+1},\dots,x_q),x_{q+1},\dots,x_k)\\
=\sum_{l;a_0,\dots,a_p;\al_0,\dots,\al_p}\fn_{\al_0}^l(f_{\al_1}^{a_1}(x_1,\dots,x_{a_1}),\dots,f_{\al_l}^{a_l-a_{l-1}}(x_{a_{l-1}+1},\dots,x_{a_l})
\end{split}\e
where $x_1,\dots, x_k,\#,p,q,\al,\be$ are as in \eq{gapped A_N} and $(l;a_0,\dots,a_p;\al_0,\dots,\al_p)$ any decomposition of $(k,\ga).$
The operators $\fn_{\al_0}^l, f_{\al_1}^{a_1},f_{\al_l}^{a_l-a_{l-1}}$ are well-defined because $[l,\al_0],[a_1,\al_1],\dots,[a_l-a_{l-1},\al_l]\le N$ by \eq{kga}.

When $N=\iy$ the gapped $A_\iy$ functor $f:(\cA,\fm)\to(\cB,\fn)$ defines a filtered $A_\iy$ functor $(f^k_{X_0\dots X_k})^{k\in\N}_{X_0,\dots,X_k\in\obj\cA}$ between the corresponding filtered $A_\iy$ categories, with $f^k_{X_0\dots X_k}:=\sum T^\ga f^{k\ga}_{X_0\dots X_k}$ where $\ga\in\Ga\-\{0\}$ for $k=0$ and $\ga\in\Ga$ for $k>0.$ The sum is well-defined because $\hom^0(\cA_{X_0X_1}[1],\dots,\cA_{X_{k-1}X_k}[1];\cB_{fX_0fX_k}[1])$ is complete. 
\end{dfn}

We define the identity functors of gapped $A_N$ categories, $N\in\N\sqcup\{\iy\}.$
\begin{dfn}
Let $\cA$ be a filtered $A_\iy$ category; we leave out $\fm$ for short. 
The {\it identity} functor $\cA\to\cA$ is defined by taking the function $\obj\cA\to\obj\cA$ to be the identity; $f^0_X=0\in \cA^0_{XX}$ for $X\in\obj\cA;$ $f^1_{XY}:\cA_{XY}\to\cA_{XY}$ to be the identity for $X,Y\in\obj \cA;$ and $f^k_{X_0\dots X_k}=0$ for $X_0,\dots,X_k\in\obj\cA$ with $k\ge 2.$

Suppose now that $\cA$ is a gapped $A_N$ category, $N\in\N\sqcup\{\iy\}.$ Then the identity functor $\cA\to\cA$ is defined by taking the function $\obj\cA\to\obj\cA$ to be the identity; $f^{10}_{XY}:\cA_{XY}\to\cA_{XY}$ to be the identity for $X,Y\in\obj \cA;$ and $f^{k\ga}_{X_0\dots X_k}=0$ for $X_0,\dots,X_k\in\obj\cA$ with $(k,\ga)\in(\N\times\Ga)\-\{(0,0),(1,0)\}.$
\end{dfn}

We define the composites of gapped $A_N$ functors, $N\in\N\sqcup\{\iy\}.$
\begin{dfn}
Let $\cA,\cB,\cC$ be filtered $A_\iy$ categories and $f:\cA\to\cB,$ $g:\cB\to\cC$ filtered $A_\iy$ functors. Then the {\it composite} $A_\iy$ functor $g\cm f:\cA\to\cC$ means the ordinary composite $\obj\cA\to \obj\cC$ and the family $(g\cm f)^k_{X_0\dots X_k},$  $X_0,\dots,X_k\in\obj\cA$ with $k\in\N,$ defined by
\[
(g\cm f)^k(x_1,\dots,x_k):=\sum_{a_1,\dots,a_l}(-1)^\#g^l(f^{a_1}(x_1,\dots,x_{a_1}),\dots,f^{a_l-a_{l-1}}(x_{a_l+1},\dots,x_{a_l}))
\]
where $x_1,\dots,x_k,\#,a_1,\dots,a_l$ are as in \eq{nf}.

Suppose now that $\cA,\cB,\cC$ are filtered $A_\iy$ categories, $N\in\N\sqcup\{\iy\};$ and $f:\cA\to\cB,$ $g:\cB\to\cC$ filtered $A_\iy$ functors.
Then the {\it composite} $A_N$ functor $g\cm f:\cA\to\cC$ means the ordinary composite $\obj\cA\to \obj\cC;$ a discrete sub-monoid $\Ga\subset[0,\iy)$ with respect to which $f,g$ are gapped; and the family $(g\cm f)^{k\ga}_{X_0\dots X_k},$ $(k,\ga)\in(\N\times\Ga)\-\{(0,0)\}$ with $[k,\ga]\le N$ and $X_0,\dots,X_k\in\obj\cA,$ defined by
\e\l{gf_k}
(g\cm f)^k_\ga(x_1,\dots,x_k):=\!\!\!\!\!\!\!\!\!\!\!\!\!\!\!\sum_{\,\,\,\,\,\,\,\,\,\,\,\,l;a_1,\dots,a_p;\al_1,\dots,\al_p\!\!\!\!\!\!\!\!\!\!\!\!\!\!\!}\!\!\!\!\!\!\!\!\!\!\!\!\!\!\!(-1)^\#g^l_{\al_0}(f_{\al_1}^{a_1}(x_1,\dots,x_{a_1}),\dots,f_{\al_p}^{a_p-a_{p-1}}(x_{a_p+1},\dots,x_{a_p}))
\e
where $x_1,\dots, x_k,\#,l,a_1,\dots,a_p,\al_1,\dots,\al_p$ are as in \eq{A_N funct}. Whether or not \eq{gf_k} holds is independent of the choice of $\Ga$ with respect to which $f,g$ are gapped; for we can first make the smallest choice of $\Ga$ and then show that $(g\cm f)^{k\ga}_{X_0\dots X_k}=0$ for $\ga$ not in this smallest $\Ga.$
\end{dfn}

We define the homotopies of gapped $A_N$ functors, $N\in\N\sqcup\{\iy\}.$
We make a rather strong hypothesis on the functions between object sets. 
\begin{dfn}\l{dfn: homotopies}
Let $(\cA,\fm),(\cB,\fn)$ be filtered $A_\iy$ categories and $f,g:(\cA,\fm)\to(\cB,\fn)$ filtered $A_\iy$ functors.
Suppose that $f,g$ induce the same function $\obj\cA\to\obj\cB$ between the object sets, which we denote by $f$ for short. By a {\it homotopy} $H:f\Rightarrow g$ we mean a family
\e (H^k=H^k_{X_0,\dots,X_k}\in \hom^{-1}(\cA_{X_0X_1}[1],\dots,\cA_{X_{k-1}X_k}[1];\cA_{fX_0fX_k}[1])),\e
$k\in\N$ and $X_0,\dots,X_k\in\obj\cA,$ such that {\bf(i)} for every $X\in\obj\cA$ the element $H^0_X\in\cA^{-1}_{fXfX}$ has positive valuation, that is, $\v H^0_X>0;$ and {\bf(ii)} for $X_0,\dots,X_k\in\obj\cA$ with $k\in\N$ we have
\begin{align*}
\sum_{p,q}(-1)^\#H^{k+p-q+1}(x_1,\dots,x_p,\fm^{q-p}(x_{p+1},\dots,x_q),x_{q+1},\dots,x_k)\\
=\sum_{a_1,\dots,a_i;b_0,\dots,b_j}\fn^{i+j+1}(f^{a_1}(x_1,\dots,x_{a_1}),\dots,f^{a_i-a_{i-1}}(x_{a_{i-1}+1},\dots,x_{a_i}),\\
H^{b_0-a_i}(x_{a_i+1},\dots,x_{b_0}),g^{b_1-b_0}(x_{b_0+1},\dots,x_{b_1}),\dots,g^{b_j-b_{j-1}}(x_{b_{j-1}+1},\dots,x_{b_j}))
\end{align*}
where $x_1,\dots,x_k,\#,p,q$ are as in \eq{nf}; and $a_1,\dots,a_i,b_0,\dots,b_j$ any integers with $0\le a_1\le\dots\le a_i\le b_0\le b_1\le\dots\le b_j=k.$ The right-hand side converges by (i) above.

Suppose now that $(\cA,\fm),(\cB,\fn)$ are gapped $A_N$ categories, $N\in\N\sqcup\{\iy\};$ and $f,g:(\cA,\fm)\to(\cB,\fn)$ gapped $A_N$ functors with the same function $f:\obj\cA\to\obj\cB$ between the object sets. 
By a {\it homotopy} $H:f\Rightarrow g$ we mean a discrete sub-monoid $\Ga\subset[0,\iy)$ with respect to which $f,g$ are both gapped and a family
\e (H^k_\ga=H^{k\ga}_{X_0,\dots,X_k}\in \hom^{-1}(\cA_{X_0X_1}[1],\dots,\cA_{X_{k-1}X_k}[1];\cA_{fX_0fX_k}[1])),\e
$(k,\ga)\in(\N\t\Ga)\-\{(0,0)\}$ with $[k,\ga]\le N$ and $X_0,\dots,X_k\in\obj\cA,$ such that
\begin{align*}
&\sum_{p,q,\al,\be}(-1)^\# H_\al^{k+p-q+1}(x_1,\dots,x_p,\fm_\be^{q-p}(x_{p+1},\dots,x_q),x_{q+1},\dots,x_k)\\
&=\sum_{i,j;a_0,\dots,a_i,b_0,\dots,b_j;\al_0,\dots\al_i,\be_0,\dots,\be_j}
\!\!\!\!\!\!\!\!\!\!\!\!\!\!\!\!\!\!\!\!\!\!\!\!\!\!\!\!\!\!
\fn_{\al_0}^{i+j+1}(f_{\al_1}^{a_1}(x_1,\dots,x_{a_1}),\dots,f_{\al_i}^{a_i-a_{i-1}}(x_{a_{i-1}+1},\dots,x_{a_i}),\\
&H_{\be_0}^{b_0-a_i}(x_{a_i+1},\dots,x_{b_0}),g_{\be_1}^{b_1-b_0}(x_{b_0+1},\dots,x_{b_1}),\dots,g_{\be_j}^{b_j-b_{j-1}}(x_{b_{j-1}+1},\dots,x_{b_j}))
\end{align*}
where $x_1,\dots,x_k,\#,p,q,\al,\be$ are as in \eq{A_N funct} and
$i+j+1,$ $(a_0,\dots,a_i,b_0,\dots,b_j),$ $(\al_0,\dots\al_i,\be_0,\dots,\be_j)$ any decomposition of $(k,\ga).$
The operators 
$\fn_{\al_0}^{i+j+1},$ $f_{\al_1}^{a_1},$ $\dots,$ $f_{\al_i}^{a_i-a_{i-1}},$ $H_{\be_0}^{b_0-a_i},$ $g_{\be_1}^{b_1-b_0},$ $\dots,$ $g_{\be_j}^{b_j-b_{j-1}}$ are well-defined because $[i+j+1,\al_0],$ $[a_1,\al_1],\dots,[a_i,\al_i],$ $[b_0-a_i,\be_0],$ $[b_1-b_0,\be_1],\dots,[b_j-b_{j-1},\be_j]$ are all $\le N$ by \eq{kga}.

When $N=\iy$ the gapped $A_\iy$ homotopy $H:f\Rightarrow g$ defines a filtered $A_\iy$ homotopy $(H^k_{X_0\dots X_k})^{k\in\N}_{X_0,\dots,X_k\in\obj\cA}$ between the corresponding filtered $A_\iy$ functors,
with $H^k_{X_0\dots X_k}:=\sum T^\ga H^{k\ga}_{X_0\dots X_k}$ where $\ga\in\Ga\-\{0\}$ for $k=0$ and $\ga\in\Ga$ for $k>0.$ The sum is well-defined because $\hom^{-1}(\cA_{X_0X_1}[1],\dots,\cA_{X_{k-1}X_k}[1];\cA_{fX_0fX_k}[1])$ is complete.
\end{dfn}
\begin{rem}\l{rem: models}
We can make an equivalent definition by using {\it models} of $\cB\times[0,1]$ as Fukaya, Oh, Ohta and Ono do \cite[\S4.2]{FOOO}. This is the result of obstruction theory arguments \cite[\S4.4]{FOOO} which extend readily to our context; the major difference is that mentioned in Remark \ref{rem: difference from FOOO}.
\end{rem}

We define the notion of being homotopic for gapped $A_N$ functors, $N\in\N\sqcup\{\iy\}.$
\begin{dfn}
Let $(\cA,\fm),(\cB,\fn)$ be gapped $A_N$ categories and $f,g:(\cA,\fm)\to(\cB,\fn)$ gapped $A_N$ functors which induce the same function between the object sets.
We say that $f,g$ are {\it homotopic} if there is a homotopy from $f$ to $g.$
\end{dfn}
The following is a straightforward extension of Fukaya, Oh, Ohta and Ono's result for gapped $A_N$ algebras.
\begin{lem}[cf.\! {\cite[Proposition 4.2.37]{FOOO}}]
Let $(\cA,\fm),(\cB,\fn)$ be gapped $A_N$ categories and fix a function $\obj\cA\to\obj\cB.$ 
Then being homotopic is an equivalence relation for those gapped $A_N$ functors from $(\cA,\fm)$ to $(\cB,\fn)$ inducing the given function $\obj\cA\to\obj\cB.$ \qed
\end{lem}

We define the homotopy equivalences of gapped $A_N$ categories, $N\in\N\sqcup\{\iy\}.$
\begin{dfn}
Let $(\cA,\fm),(\cB,\fn)$ be gapped $A_N$ categories of the same object set and $f:(\cA,\fm)\to(\cB,\fn)$ a gapped $A_N$ functor which induces the identity on the object set.
We call $f$ a {\it homotopy equivalence} if there exists a gapped $A_N$ functor $g:(\cB,\fn)\to(\cA,\fm)$ which induces the identity on the object set and such that $g\cm f,f\cm g$ are homotopic to the identity functors of $(\cA,\fm),(\cB,\fn)$ respectively.
We say that $(\cA,\fm),(\cB,\fn)$ are {\it homotopy equivalent} if there exists a homotopy equivalence $(\cA,\fm)\to(\cB,\fn).$
\end{dfn}
The following is also a straightforward extension of Fukaya, Oh, Ohta and Ono's result for gapped $A_N$ algebras.
\begin{lem}[cf.\! {\cite[Corollary 4.2.44 and Remark 7.2.71]{FOOO}}]
\l{thm: A_N Whitehead}
Being homotopy equivalent is an equivalence relation for gapped $A_N$ categories of the same object set.

Let $(\cA,\fm),(\cB,\fn)$ be gapped $A_N$ categories of the same object set and $f:(\cA,\fm)\to(\cB,\fn)$ a gapped $A_N$ functor which induces the identity on the object set.
Then $f$ is a homotopy equivalence if and only if for any $X,Y\in\obj \cA$ the co-chain map $f^{10}_{XY}:(\cA_{XY},\fm^{10}_{XY})\to (\cB_{fXfY},\fn^{10}_{XY})$ induces an isomorphism of the cohomology groups. \qed
\end{lem}

Here is the obstruction theory result for gapped $A_N$ categories which extends that of Fukaya, Oh, Ohta and Ono \cite[Theorem 7.2.72]{FOOO}.
\begin{thm}\l{thm: ob1}
Let $(\cB,\fn)$ be a gapped $A_{N+1}$ category, $N\in\N,$ and $(\cB',\fn)$ a full subcategory of the $A_N$ truncation of $(\cB,\fn).$
Let $(\cA',\fm)$ be a gapped $A_N$ category with $\obj\cA'=\obj\cB',$ and $g:(\cA',\fm)\to(\cB',\fn)$ a homotopy equivalence which induces the identity on the object set.
Then there exist a gapped $A_{N+1}$ category $(\cA,\fm)$ with $\obj\cA=\obj\cB,$ which contains $(\cA',\fm)$ as a full subcategory in its $A_N$ truncation; and a gapped $A_{N+1}$ functor $f:(\cA,\fm)\to(\cB,\fn)$ which induces the identity on the object set, whose $A_N$ truncation agrees with $g$ on $\cA'.$
\end{thm}
\begin{proof}
Recall from definition that the $A_0$ truncation of $(\cA',\fm)$ is a family of co-chain complexes.
We assign to $X,Y\in\obj\cA$ a co-chain complex $(\cA_{XY},\fm^{10}_{XY})$ and a co-chain map $f^{10}_{XY}:(\cA_{XY},\fm^{10}_{XY})\to(\cB_{XY},\fn^{10}_{XY}).$
For $X,Y\in\obj\cA'$ set $(\cA_{XY},\fm^{10}_{XY}):=(\cA_{XY}',\fm^{10}_{XY})$ and $f^{10}_{XY}:=g^{10}_{XY}.$
For $X,Y\in\obj\cA$ with $X\notin\obj\cA'$ or $Y\notin\obj\cA'$ set $(\cA_{XY},\fm^{10}_{XY}):=(\cB_{XY},\fn^{10}_{XY})$ and define $f^{10}_{XY}:\cA_{XY}\to\cB_{XY}$ to be the identity map.
In both cases the co-chain map $f^{10}_{XY}:(\cA_{XY},\fm^{10}_{XY})\to (\cB_{XY},\fn^{10}_{XY})$ induces an isomorphism of the cohomology groups.

We extend these co-chain complexes to a gapped $A_N$ category $(\cA,\fm)$ and these co-chain maps to a gapped $A_N$ functor $f^N:(\cA,\fm)\to(\cB,\fn)^N$ where $(\cB,\fn)^N$ is the $A_M$ truncation of $(\cB,\fn).$
We do this by an induction on $M\in\{0,\dots,N-1\}.$
Suppose therefore that we have defined $\fm^{l\de}_{X_0\dots X_l}$ for $(l,\de)\in(\N\times\Ga)\-\{(0,0)\}$ with $[l,\de]\le M$ and for $X_0\dots X_l\in\obj\cA.$
We extend these to a gapped $A_{M+1}$ category $(\cA,\fm)$ and a gapped $A_{M+1}$ functor $(\cA,\fm)\to(\cB,\fn)^{M+1}.$

Denote by $(k,\ga)\in(\N\times\Ga)\-\{(0,0)\}$ the element with $[k,\ga]=M+1.$
We assign to $X_0,\dots,X_k\in\obj\cA$ some filtered homomorphisms
$\fm^{k\ga}_{X_0\dots X_k},f^{k\ga}_{X_0\dots X_k}$ which satisfy the $A_{M+1}$ equations.
For $X_0,\dots,X_k\in\obj\cA'$ we use the given $\fm^{k\ga}_{X_0\dots X_k},f^{k\ga}_{X_0\dots X_k};$
and otherwise we introduce some new $\fm^{k\ga}_{X_0\dots X_k},f^{k\ga}_{X_0\dots X_k}.$
Define a degree-one $\La$-linear map $\d$ from $\hom^*(\cA_{X_0X_1}[1],\dots,\cA_{X_{k-1}X_k}[1];\cA_{X_0X_k}[1])$ to itself by 
\e\d\ph:=\fm^1_0\cm\ph-(-1)^{\deg\ph}\ph\cm(\text{the co-derivation of }\fm^1_0).\e
This is well-defined because $\v\ph\ge0$ implies $\v(\d\ph)\ge0.$ And $\d\cm\d=0.$
Define an {\it obstruction co-cycle} $({\rm o}\fm)^{k\ga}_{X_0\dots X_k}\in\hom^2(\cA_{X_0X_1}[1],\dots,\cA_{X_{k-1}X_k}[1];\cA_{X_0X_k}[1])$ by
\[
(x_1,\dots,x_k)\mapsto\sum_{p,q}(-1)^\#\fm^{k+p-q+1}_\al(x_1,\dots,x_p,\fm^{q-p}_\be(x_{p+1},\dots,x_q),x_{q+1},\dots,x_k)
\]
where $[k+p-q+1,\al]\le M$ or $[q-p,\be]\le M.$ Doing the same computation as Fukaya and others \cite[(4.5.2.1)]{FOOO} we find $\d({\rm o}\fm)^{k\ga}_{X_0\dots X_k}=0;$ that is, $({\rm o}\fm)^{k\ga}_{X_0\dots X_k}$ is certainly a co-cycle.
Their computation \cite[(4.5.2.4)]{FOOO} shows also that the $A_M$ homotopy equivalence $f:(\cA,\fm)\to(\cB,\fn)$ induces an isomorphism from the $\d$-cohomology group
$H^2(\hom(\cA_{X_0X_1}[1],\dots,\cA_{X_{k-1}X_k}[1];\cA_{X_0X_k}[1]))$ to the $\d$-cohomology group $H^2(\hom(\cB_{X_0X_1}[1],\dots,\cB_{X_{k-1}X_k}[1];\cB_{X_0X_k}[1])),$
mapping the cohomology class of $({\rm o}\fm)^{k\ga}_{X_0\dots X_k}$ to that of $({\rm o}\fn)^{k\ga}_{X_0\dots X_k}.$
But the latter vanishes for the obvious reason: $(\cB,\fn)$ is already an $A_{M+1}$ category.
Thus $({\rm o}\fm)^{k\ga}_{X_0\dots X_k}$ is a co-boundary, written as $\d\mu^{k\ga}_{X_0\dots X_k}$ for some
\e\mu^{k\ga}_{X_0\dots X_k}\in \hom^1(\cA_{X_0X_1}[1],\dots,\cA_{X_{k-1}X_k}[1];\cA_{X_0X_k}[1]).\e
Define then an obstruction co-cycle in $\hom^1(\cA_{X_0X_1}[1],\dots,\cA_{X_{k-1}X_k}[1];\cB_{X_0X_k}[1])$ by sending $(x_1,\dots,x_k)$ to
\[\begin{split}
\left[\!\!\!\!\!\!\!\!\!\!\!!\!\!\sum_{\,\,\,\,\,\,\,\,\,\,\,\,\,l;a_1,\dots,a_l;\al_1,\dots,\al_l\!\!\!\!\!\!\!\!\!\!\!\!\!\!}\!\!\!\!\!\!\!\!\!\!\!\!\fm^l_{\al_0}(f^{a_1}_{\al_1}(x_1,\dots,x_{a_1}),\dots,f^{a_l-a_{l-1}}_{\al_l}(x_{a_{l-1}},\dots,x_{a_l}))\right]+f^1_0\cm\mu^k_\ga(x_1,\dots,x_k)\\
+\sum_{p,q;\al,\be}(-1)^\# f^{k+p-q+1}_\al(x_1,\dots,x_p,\fm^{q-p}_\be(x_{p+1},\dots,x_q),x_{q+1},\dots,x_k)
\end{split}\]
where $[k+p-q+1,\al]\le M$ or $[q-p,\be]\le M;$ and $l,$ $a_1,\dots,a_l,$ $\al_1,\dots,\al_l$ are a decomposition of $(k,\ga)$ with $(l,\al_0)>(1,0)$ so that $[a_1,\al_1],\dots[a_l-a_{l-1},\al_l]\le M.$
As $f^1_0$ is a co-chain equivalence we can modify $\mu^{k\ga}_{X_0\dots X_k}$ if we need, so that the corresponding obstruction co-cycle will vanish.
So there is a gapped $A_{M+1}$ functor $f:(\cA,\fm)\to(\cB,\fn)^{M+1}$ as we want, completing the induction.

We have thus obtained a gapped $A_N$ category $(\cA,\fm)$ and a gapped $A_N$ functor $f:(\cA,\fm)\to(\cB,\fn)^N;$ that is, $\fm^{k\ga}_{X_0\dots X_k},$ $f^{k\ga}_{X_0\dots X_k}$ are given for {\it every} $(k,\ga)\in(\N\times\Ga)'\-\{(0,0)\}$ with $[k,\ga]\le N.$ Extend these to $(k,\ga)$ with $[k,\ga]=N+1,$ by repeating the obstruction theory arguments above; then the proof is complete.
\end{proof}
\begin{rem}
The first step above has no counterpart in their original proof \cite[Theorem 7.2.72]{FOOO}.
The last step is the closest to it; the difference is only that mentioned in Remark \ref{rem: difference from FOOO}.
The induction steps are similar but we must be more careful: we cannot change the given $\fm^{k\ga}_{X_0\dots X_k},$ $f^{k\ga}_{X_0\dots X_k}.$
\end{rem}

\subsection{Fukaya Categories}\l{sec: Fuk} 
We begin by recalling the definition of symplectic manifolds with contact boundary. As in Definition \ref{dfn: br} the Weinstein neighbourhoods are the typical example.
\begin{dfn}  \label{dfn:Liouv}
A compact symplectic manifold $(X,\om)$ with contact boundary is a symplectic manifold which is given a $1$-form $\la$ near $\d X\subset X$ such that $d\la=\om$ and the vector field $v_\la$ given by $v_\la\lrcorner \,\om=\la$ is outwards pointing on $\d X$. We call $\la$ a {\it Liouville $1$-form} of $(X,\om).$
 
In these circumstances there exists near $\d X\sb X$ a unique smooth function $r_\la$ with values in $(0,1)$ outside $\d X,$ with values $1$ on $\d X,$ and such that $dr_\la(v_\la) =r_\la.$ We call $r_\la$ the {\it radius} function near $\d X.$ We say that an almost complex structures $J$ is compatible with the contact boundary if it is compatible with $\om$ in the usual sense, if $\la=Jd r_\la$ near $\d X$ and if $J$ is invariant near $\d X$ under the flow of $v_\la.$

If there exists a {\it global} $1$-form $\la$ with $d\la=\om $ and satisfying the conditions above then we call $(X,\la)$ a {\it Liouville domain}. By a {\it Liouville subdomain} of a Liouville domain $(X,\la)$ we mean a compact co-dimension $0$ submanifold $Y\subset X\-\partial X$ with smooth boundary at every point of which the vector field $v_\la$ points outwards. Then $(Y,\la|_Y)$ is itself a Liouville domain. Its radius function is well defined in $Y$ near $\partial Y$ and in fact extends to a neighbourhood in $X$ of $\partial Y.$ 
\end{dfn}

We consider the following class of Lagrangians:
\begin{dfn}\l{dfn: Lag}
Let $(X,\om)$ be a compact symplectic manifold with contact boundary, and $\la$ a Liouville $1$-form of it. Let $\hat L$ be a compact manifold possibly with boundary and $\io:\hat{L}\to X$ a Lagrangian immersion. We say that $L$ is {\it generic} if its every self-intersection point is a transverse double point outside $\d X.$ We say that $L$ has {\it Legendrian} collar if $\io(\d \hat{L})\sb \d X,$ $\io^*\la=0$ near $\d \hat{L}\sb \hat{L}$ and $\io^*r_\la$ has regular value $1$ (this is automatically true if $\hat{L}$ is closed).

Suppose now that $L_0$  and $L_1$ are two Lagrangians in $X$ given by immersions $\hat L_0\to X$ and $\hat L_1\to X.$
We say that $L_0,L_1$ are a {\it generic pair} if the immersion $\hat L_0\sqcup \hat L_1$ is generic. 
\end{dfn}

We recall now the definition of $\Z$-graded symplectic manifolds which uses almost complex structures on the ambient symplectic manifold; for other equivalent definitions see for instance Seidel \cite{Seid1}.
\begin{dfn}\l{dfn: Z-grading}
Let $X$ be a symplectic manifold of dimension $2n.$ Then a {\it $\Z$-grading} of $X$ is the homotopy class of a pair $(J,\Om^{\otimes2})$ where $J$ is a compatible almost complex structure and $\Om^{\otimes2}$ a nowhere-vanishing $C^\iy$ section of the complex line bundle $(\La^{n0}X)^{\otimes 2}$ where $\La^{n0}X$ is the complex line bundle over $X$ of $(n,0)$ forms relative to $J.$ We say that $X$ is {\it $\Z$-graded} if it is given a $\Z$-grading. 
\end{dfn}
\begin{rem}
A $\Z$-grading of $(X,\om)$ exists if and only if $2c_1(X)=0\in H^2(X,\Z)$ 
where $c_1(X)$ denotes the first Chern class of $X.$
\end{rem}

We recall next the definition of $\Z$-graded Lagrangians, which is a generalization of special Lagrangians in the sense of Definition \ref{dfn: sLag}.
\begin{dfn}\l{dfn:Lag Z-grading}
Let $X$ be a $\Z$-graded symplectic manifold of dimension $2n,$ and let $(J,\Om^{\otimes2})$ represent the $\Z$-grading. It is known then that for each Lagrangian immersion $\io:\hat L\to X$ there exists a unique $C^\iy$ function $\ph:\hat L\to \R/2\pi\Z$ such that $\io^*\Om^{\ot2}=e^{i\ph}{\rm vol}^2$ where ${\rm vol}^2$ is some nowhere-vanishing section of the real line bundle $(\La^n\hat L)^{\otimes2}$ where $\La^n\hat L$ is the real line bundle over $L$ of $n$-forms. We call $\ph:\hat L\to \R/2\pi\Z$ the {\it phase function} of $\io:\hat L\to X.$

By a {\it $\Z$-grading} of $\io:\hat L\to X$ we mean a lift of $\ph:\hat L\to \R/2\pi\Z$ to the universal cover $\R\to\R/2\pi\Z\cong S^1.$ By a {\it $\Z$-graded Lagrangian} we mean the pair of $\io:\hat L\to X$ and a $\Z$-grading of it. We say that $\io:\hat L\to X$ is {\it special of phase $\ph:\hat L\to \R/2\pi\Z$} with respect to $\Om^{\ot2}$ if $\ph:\hat L\to \R/2\pi\Z$ is locally constant. In particular, a connected $\Z$-graded special Lagrangian is equivalent to the pair of $\io:\hat L\to X$ and a constant phase $\ph\in\R.$ Define an operator $[1]$ by sending the $\Z$-graded Lagrangian $(\io,\ph)$ to $(\io,\ph-2\pi)$ where $\ph$ is the lifted phase function $\hat L\to\R$ so that we can do define $\ph-2\pi:\hat L\to\R.$ This will induce the shift operator of the triangulated $A_\iy$ category $\Tw\cF(X)$ once $\cF(X)$ has been defined. 
\end{dfn}
\begin{rem}\l{rem: Lag grading}
The Lagrangian immersion admits a $\Z$-grading if and only if the group homomorphism $\ph_*:H_1(L,\Z)\to H_1(S^1,\Z)$ vanishes.
\end{rem}

We define next the Maslov indices. 
\begin{dfn}
Let $X$ be a $\Z$-graded symplectic manifold of dimension $2n,$ and let $(J,\Om^{\otimes2})$ represent the $\Z$-grading. Let $\io:\hat{L}\to L\subset X$ be a generic Lagrangian immersion. We consider the fibre product
\e L\t_XL:=\{(x,y)\in \hat{L}\t \hat{L}:\io(x)=\io(y)\}\e
which consists of the diagonal
$\{(x,x)\in \hat{L}\t \hat{L}\},$ identified with $\hat{L},$ and the self-intersection pairs in $\hat{L}.$
Suppose that $L$ is $\Z$-graded by a phase function $\ph:\hat L\to\R$ and we define then a {\it Maslov index} function
$\mu_L:L\t_XL\to \Z.$
Set $\mu_L\=0$ on the diagonal $\hat L.$
We associate to each pair $(x_1,x_2)\in L\t_XL$ an integer called the {\it Maslov index}. Put $x:=\io(x_1)=\io(x_2)\in X$ and take a $\C$-linear isomorphism $(T_xX,J|_x)\cong\C^n$ which maps $\om$ to
$\frac{i}{2}\sum_{j=1}^ndz_j\w d\bar{z}_j,$
$T_{x_2}L$ to $\R^n\sb\C^n$
and $T_{x_1}L$ to
$\{(e^{i\th_1}t_1,\cdots,e^{i\th_n}t_n)\in\C^n:t_1,\cdots,t_n\in\R\}$
for some $\th_1,\cdots,\th_n\in(0,\pi).$
These $\th_1,\cdots,\th_n$ are unique up to order so we can define
\e
\l{mu0}\mu_L(x_1,x_2):=\frac{1}{2\pi}[2\th_1+\cdots+2\th_n+\ph(x_2)-\ph(x_1)].
\e
The definition of phase functions implies readily that $\mu_L(x_1,x_2)$ is in fact an integer. 

Suppose now that $L_1,L_2\sb X$ are a generic pair of Lagrangians, $\Z$-graded by $\ph_1:L_1\to\R$ and $\ph_2:L_2\to\R$ respectively.
The fibre product $L_1\t_XL_2$ is then a finite set and we define the Maslov index function
$\mu_{L_1L_2}:L_1\t_XL_2\to \Z$ by sending $(x_1,x_2)$ to the right-hand side of \eq{mu0} above with $\ph_1(x_2),\ph_2(x_2)$ in place of $\ph(x_1),\ph(x_2)$ respectively.  
\end{dfn}
\begin{rem}
Since $\th_1,\cdots,\th_n\in(0,\pi)$ it follows by \eq{mu0} that
\e
\l{mu} \mu_L(x_1,x_2)-n<\frac{\ph(x_2)-\ph(x_1)}{2\pi}<\mu_L(x_1,x_2).
\e
In particular, if $\ph(x_2)-\ph(x_1)\in(-1,1)$ then the integer $\mu_L(x_1,x_2)$ must lie in the closed interval $[0,n];$ and if $\ph(x_1)=\ph(x_2)$ then $\mu_L(x_1,x_2)\in[1,n-1].$
\end{rem}
\begin{exam}
Take $X=\C^n$ with co-ordinates $z_1,\dots,z_n,$ $\Om=dz_1\wedge\dots\wedge dz_n$ and $L_2=\R^n\subset\C^n.$
Define a $\Z$-grading $\ph_2:L_2\to\R$ by $\ph_2\=0.$
Take $\th_1,\dots,\th_m\in (\frac\pi2,\pi)$ and $\th_{m+1},\dots,\th_n\in (0,\frac\pi2).$
For $s\in[0,1]$ define
\[L_s:=\{(e^{is(\th_1-\pi)}t_1,\dots,e^{is(\th_m-\pi)}t_m,e^{is\th_{m+1}}t_{m+1},\dots,e^{is\th_n}t_n):t_1,\dots,t_n\in\R\}\]
so $L_0=L_2.$
Define a $\Z$-grading $\ph_s:L_s\to\R$ by starting with $\ph_2\=0$ on $L_2$ and lifting the path $[0,s].$ Then   
$\ph_s:=2s[(\th_1-\pi)+\dots+(\th_m-\pi)+\th_{m+1}+\dots+\th_n]$ and in particular
$\ph_1=2(\th_1+\dots+\th_n)-2m\pi.$ The Maslov index of $(L_1,L_2)$ at the intersection point $0\in\C^n$ is now equal to 
$\frac{1}{2\pi}[2\th_1+\cdots+2\th_n+\ph_2-\ph_1]=m.$
This agrees with the Morse index of the quadratic function $(\tan \th_1)t_1^2+\dots+(\tan\th_n) t_n^2$ on $L_2.$ Our convention is thus the same as Seidel's \cite[\S2d, (v)]{Seid1}.
\end{exam}

We extend the definition of relative spin structures to countably many Lagrangians. These will equip the pseudo-holomorphic curve moduli spaces with natural orientations.
\begin{dfn}
Let $X$ be a symplectic manifold, $\hat L$ a compact manifold and $\io:\hat L\to X$ a generic Lagrangian immersion; we allow $\hat L,X$ to have boundary. By a {\it relative spin structure} of $\io:\hat L\to X$ we mean the data $(\tau,V,o,\si,s)$ where $\ta$ is a triangulation of $X,$ $V$ an oriented vector bundle over the $3$-skeleton $[X,\tau]^3$ of $(X,\tau),$ $o$ an orientation of $\hat L,$
$\si$ a triangulation of $\hat L$ such that $\io$ induces a simplicial map $(\hat L,\si)\to(X,\ta),$ and $s$ a spin structure of the bundle $\io^*V\op T\hat L$ restricted to the $2$-skeleton of $(\hat L,\si).$

Suppose now that $(L_N)_{N=0}^\iy$ is a sequence of compact manifolds which may have boundary, and $\io:\bigsqcup_{N=0}^\iy L_N\to X$ a generic Lagrangian immersion. By a {\it relative spin structure} of $\io: \bigsqcup_{N=0}^\iy L_N\to X$ we mean a sequence $(\ta_N,V_N,o_N,\si_N,s_N)_{N=0}^\iy$ such that each $(\ta_N,V_N,o_N,\si_N,s_N)$ is a relative spin structure of $\io|_{L_N}:L_N\to X,$ each $(X,\ta_N)$ is a subcomplex of $(X,\ta_{N+1}),$ and the restriction of each $V_{N+1}$ to $[X,\ta_N]^3$ agrees with $V_N.$
\end{dfn}
\begin{rem}
Suppose that $V_0=V_1=V_2=\dots=:V,$ that $V$ is defined over the whole $X$ and that each $s_N$ is a spin structure of $\io^*V\op TL_N$ over the whole $L_N.$
Then each Stiefel--Whitney class $w_2(TL_N)\in H^2(L_N,\Z/2\Z)$ is the pull-back of $w_2(V)\in H^2(X,\Z/2\Z).$ 
In this case we refer to $w_2(V)\in H^2(X,\Z/2\Z)$ as the {\it background class} of the relative spin structure.

We can in practice take $X$ to be embedded in the cotangent bundle $T^*Q$ over a closed oriented manifold $Q$ and take $V=\pi^*TQ,$ where $\pi:T^*Q\to Q$ is the projection. So the background class is $\pi^*w_2(TQ)$ and the zero-section $Q$ will be given a spin structure of $V|_Q\oplus TQ.$
\end{rem}

We introduce now the notion of branes:
\begin{dfn}\l{dfn: branes}
Let $(X,\om)$ be a compact symplectic manifold with contact boundary, and $\la$ a Liouville $1$-form of it.
By a {\it brane} on $X$ we mean a pair $(L,E)$ where
\iz
\item
$L\subset X$ is a compact $\Z$-graded generically-immersed Lagrangian with Legendrian collar; and
\item
$E$ is a filtered local system over the domain $\hat{L}$ of the immersion $\hat L\to L\subset X.$
\iz
We write only $E$ in place of $(L,E)$ when we want to save notation. 
\end{dfn}
We choose countably many branes and a Novikov field.
\begin{hp}\l{hp: branes} 
Let $(X,\om)$ be a compact symplectic manifold with contact boundary, and $\la$ a Liouville $1$-form of it.
Let $\cC(X)$ be a countable set of branes such that any two distinct Lagrangian immersions in $\cC(X)$ are a generic pair in the sense of Definition \ref{dfn: Lag}; we allow some of the Lagrangian immersions to be the same.
Fix a Novikov field $(\La,\v).$ When $\La$ has characteristic $\ne0$ we suppose that $(X,\om)$ has no non-constant pseudo-holomorphic sphere (cf.\ \cite{FOOO-Z}).
When $\La$ has characteristic $\ne2$ we suppose that $X$ and the underlying Lagrangians have a relative spin structure with respect to some order of the Lagrangians. 
\end{hp}

We introduce also the homologically perturbed Floer complexes:
\begin{dfn}\l{dfn: Floer complexes} 
Let $\cC(X)$ be as in Hypothesis \ref{hp: branes} and $(L_0,E_0),(L_1,E_1)\in \cC(X)$ two elements. 
Denote by $\pi_-:L_0\times_XL_1\to L_0$ and
$\pi_+:L_0\times_XL_1\to L_1$ the projections.  
The {\it homologically perturbed Floer complex} associated to the pair $(L_0,E_0),(L_1,E_1)$ is the $\Z$-graded finite-dimensional valued $\La$-vector space
\e\l{CF} CF^*(E_0,E_1):=H^*(L_0\t_XL_1,\hom(\pi_-^*E_0,\pi_+^*E_1))[-\mu_{L_0L_1}]\e
where $[-\mu_{L_0L_1}]$ denotes the degree shift by Maslov indices.
Since the fibres of $E_0,E_1$ are valued vector spaces it follows that so are $\hom(\pi_-^*E_0,\pi_+^*E_1)$ and $CF^*(E_0,E_1).$
\end{dfn}
\begin{rem}\l{rem: CF}
The definition \eq{CF} is actually the result of applying the filtered homological perturbation lemma \cite[Theorem A]{FOOO}
to the singular or de Rham model in the literature.
More precisely, in any construction of Lagrangian Floer cohomology groups we have to choose a chain model of the
Floer complex $CF^*(E_0,E_1)$ with $L_0=L_1.$
Akaho--Joyce \cite{AJ} and Fukaya, Oh, Ohta and Ono \cite{FOOO} both take the singular chain model.
Fukaya \cite{FukCyc,Fuk} takes the de Rham model assuming that the ground field $\k$ contains $\R.$
\end{rem}

We come now to the construction of the curved Fukaya category $\cC(X).$ This is not yet the Fukaya category $\cF(X)$ used in the Introduction.
\begin{thm}\l{thm: Fuk cat}
Let $\cC(X)$ be as in Hypothesis \ref{hp: branes} and put $n:=\hf\dim X.$
Then $\cC(X)$ has the structure of a gapped $A_\iy$ category whose morphism spaces are given by \eq{CF} and which has the following properties:
\iz
\item
the $A_\infty$ structure is curved in such a way that if $(L,E)\in\cC(X),$ if $L$ has no self-intersection point of index $2$ or $n-2,$
and if $L$ does not bound any non-constant pseudo-holomorphic discs, then $(L,E)$ has curvature $\fm^0=0;$ 
\item  for a pair $E_0,E_1\in\cC(X)$, the differential
$\fm^1:CF^*(E_0,E_1)\to CF^*(E_0,E_1)$ has leading term $\fm^1_0=0;$ and
  \item for a triple $E_0,E_1,E_2\in\cC(X)$ which are supported on the same Lagrangian, the leading term $\fm^2_0$ of the product map
\e\fm^2:CF^*(E_0,E_1)\t CF^*(E_1,E_2)\to CF^*(E_0,E_2)\e
vanishes with the following exceptions: on the diagonal component $\hat{L}$ of $L\t_XL$ this agrees with the usual cup product map
\[ H^*(\hat{L},\hom(E_0,E_1)) \otimes H^*(\hat{L},\hom(E_1,E_2)) \to H^*(\hat{L},\hom(E_0,E_2));\]
and on each self-intersection pair $(a,b)\in L\times_XL$ it is given by the map
\[ \hom(E_0|_a,E_1|_b)[-\mu_{ab}] \otimes \hom(E_1|_b,E_2|_a)[-\mu_{ba}] \to  H^n(\hat{L},\hom(E_0,E_2)).   \]
\iz
\end{thm}

\subsection{Proof of Theorem \ref{thm: Fuk cat}}\l{sec: Fuk2}
We begin by taking the singular chain model of \eq{CF} as in Remark \ref{rem: CF}:
\begin{dfn}\l{dfn: chain model}
Let $\hat L$ be a compact manifold with boundary, which we suppose oriented if $\La$ has characteristic $\ne2.$ Let $E$ be a filtered local system on $\hat L.$ The smooth singular chain complex $C_*(L,E)$ is the $\La^0$ module generated by pairs $(\si,\ep)$ where $\si$ is a smooth singular $i$-simplex in $\hat L,$ and $\ep$ a (flat) section of $\si^*E$ of non-negative valuation, modulo the relation
\e a(\si,\ep)=(\si,a\ep)\text{ for }a\in\La^0.\e
The differential $d$ on $C_*(\hat L,E)$ is defined by
\e d(\si,\ep)=\sum_{j=0}^i(-1)^j(\si\cm \de^j,\ep\cm \de^j)\e
where $\de^j$ is the $j^{\rm th}$ face inclusion map. 

We define $C_*(\hat L,\d \hat L;E):=C_*(\hat L,E)/C_*(\d \hat L,E)$ which we regard, by Poincar\'e duality (which makes sense because of the relative spin structure data), as a {\it cochain} complex. Recall from Hypothesis \ref{hp: branes} that we have the date $\cC(X).$ To $(L_i,E_i)\in\cC(X),$ $i=0,1,$ we assign
\e\l{E0E1} C_*(L_0\t_X L_1, \d(L_0\t_X L_1); \hom (\pi_-^*E_0,\pi_+^*E_1))\e
where $\pi_+,\pi_-$ are as in Definition \ref{dfn: Floer complexes}.
\end{dfn}

Let $J$ be a compatible almost complex structure on $X$ in the sense of Definition \ref{dfn:Liouv}. 
We introduce now the moduli spaces of $J$-holomorphic discs.
Let $L_0,\cdots,L_k\sb X$ be $k+1$ underlying Lagrangians in $\cC(X).$
\begin{dfn}\l{dfn: hol curves}
We denote by $\cM(L_1,\cdots,L_k;L_0)$ the compact moduli space of finite-area stable $J$-holomorphic discs in $X$ with $k+1$ boundary points, whose corresponding segments are ordered counterclockwise and lifted to $L_0,\cdots,L_k.$ More precisely, for each genus-zero prestable bordered Riemann surface $\Si$ we choose a continuous map $S^1\to \d\Si$ as Akaho--Joyce do \cite[Definition 4.1]{AJ}; this is essentially unique. We distinguish $L_1,\dots,L_k$ and $L_0$ because the evaluations maps, which we will define shortly in \eq{ev_i}, are different.
 
Denote by $\ze_0,\dots,\ze_{k+1}\in S^1$ the points corresponding to the marked points on $\d\Si;$ and by $\hat L_0,\dots,\hat L_k$ the domains of the immersed Lagrangians. For each holomorphic curve $u:\Si\to X$ we choose a continuous map $S^1\-\{\ze_0,\dots,\ze_{k+1}\}\to \hat L_0\sqcup\dots\sqcup \hat L_k$ as Akaho--Joyce \cite[Definition 4.2]{AJ}. We require that the $k+1$ connected components of $S^1\-\{\ze_0,\dots,\ze_{k+1}\}$ should map to $\hat L_0,\dots,\hat L_k$ respectively.  

For $\ga\ge0$ we denote by $\cM_\ga(L_1,\cdots,L_k;L_0)$ the subset made from $J$-holomorphic curves of area $\ga.$
\end{dfn}
 
We recall the maximum principle on Riemann surfaces with Neumann boundary conditions, under the assumption that the almost complex structure is compatible with the contact boundary:
\begin{lem}[{Abouzaid--Seidel \cite[Lemma 7.2]{AS}}]
There exists a compact set of $X\-\d X$ which contains the image of every non-constant $J$-holomorphic curve included in $\cM(L_1,\cdots,L_k;L_0).$
\qed
\end{lem}
Hence by the Gromov compactness theorem we get
\begin{cor}\l{cor: Gromov}
Every $\cM_\ga(L_1,\cdots,L_k;L_0)$ is compact. \qed
\end{cor}

Suppose now that $L_0,\cdots,L_k$ are given local systems $E_0,\cdots, E_k$ respectively.
Note that for every $i=1,\cdots,k$ there is an evaluation map
\e\l{ev_i} \ev_i:\cM(L_1,\cdots,L_k;L_0)\to L_{i-1}\t_X L_i,\e
defined by $\ev_i(u):=\lim_{\th\to+0}(u(e^{-i\th}z_i),u(e^{i\th}z_i))$ where $u$ is the lifted map $S^1\to \hat L,$ and $z_i$ the $i^{\rm th}$ marked point on $S^1.$ Define $\ev_0:\cM(L_1,\cdots,L_k;L_0)\to L_0\t_X L_k$ by $\ev_0(u):=\lim_{\th\to+0}(u(e^{i\th}z_i),u(e^{-i\th}z_i)).$
So $\cM(L_1,\cdots,L_k;L_0)$ has local systems of the form $\ev_i\pi_\pm^*E_j$ for $i,j=0,\dots,k.$
Using the parallel transport map $\ev_i^*E_i\cong \ev_{i+1}^*E_i$ we get a natural map of local systems
\e\l{loc maps} \bot_{i=1}^k\hom(\ev_i^*E_{i-1},\ev_i^*E_i)\to\hom (\ev_0^*E_0,\ev_0^*E_k).\e 

Let $\Ga\sb[0,\iy)$ be a discrete sub-monoid which contains the areas of every pseudo-holomorphic disc with boundary on the underlying Lagrangians of $\cC(X).$
The gapped $A_N$ categories in what follows will be gapped with respect to $\Ga.$
We fix an order on the countable set $\cC(X)$ and use the same order on the underlying Lagrangians of it.
Here are the next three steps:
\iz
\item[\bf (I)] For each $N\in\N$ construct a gapped $A_N$ category $\cC_N$ whose objects are the first $N$ Lagrangians and whose morphism space between $(L_0,E_0),(L_1,E_1)$ is \eq{E0E1}. We denote this morphism space by $\cC(E_0,E_1).$
As Fukaya, Oh, Ohta and Ono \cite[Remark 13.5]{FOOO-Z} point out we can work with the whole complex \eq{E0E1}, not with countably generated sub-complexes of it as in the earlier works \cite{AJ,FOOO}.
\item[\bf (II)] Construct a homotopy equivalence from $\cC_N$ to the $A_N$ truncation of $\cC_{N+1}.$
\item[\bf(III)] Lift the gapped $A_N$ structure of $\cC_N$ to the gapped $A_{N+1}$ structure.
\iz 
We do (I) as follows.
By Corollary \ref{cor: Gromov} every moduli space of the form $\cM_\ga(L_1,\cdots,L_k;L_0)$ is a finite union of Kuranishi spaces, which are oriented naturally by the relative spin structure.
Suppose that for each $i=1,\dots, k$ we are given $x_i\in \cC(E_{i-1},E_i).$
We choose a multivalued perturbation of the fibre product 
\e\l{pert} \cM_\ga(L_1,\cdots,L_k;L_0)\t_\ev(x_1\t\cdots \t x_k),\e
where $\ev:=(\ev_1,\cdots,\ev_k),$ whenever $[k,\ga]\le N.$
Here we can in fact make a single-valued perturbation when the symplectic manifold $(X,\om)$ is spherically positive in the sense of Fukaya, Oh, Ohta and Ono \cite{FOOO-Z}; this is possible mainly because of Remarks 12.6 and 12.7 of their paper. Combining this perturbation with \eq{loc maps} we get an element of $\cC(E_0,E_k)$ which we call the virtual chain for $(x_1,\dots,x_k).$ It has coefficients in the ground field $\k\subset\La$ by the result of Fukaya, Oh, Ohta and Ono \cite{FOOO-Z}.
Varying $x_1,\cdots,x_k$ we get a map
\e\l{m^k_ga} \fm^k_\ga:\prod_{i=1}^k\cC(E_{i-1},E_i)\to \cC(E_0,E_k).\e
One can prove that there is a consistent choice of the virtual chains above such that $(\fm^k_\ga)$ defines the gapped $A_N$ structure.

Step (II) is done by a standard cobordism (or bifurcation) argument. 
Step (III) is a consequence of Theorem \ref{thm: ob1}.
Once these have been done, we can define a gapped $A_\iy$ category whose $A_N$ truncation is homotopy equivalent to $\cC_N.$ Applying the homological perturbation lemma to $\cC_N$ we get a gapped $A_\iy$ category that we want.
\qed

\subsection{Bounding Cochains}\l{sec: bounding}

We define the units and bounding cochains of filtered $A_\iy$ categories.
\begin{dfn}\l{dfn: strict A_infty}
Let $(\cA,\fm)$ be a filtered $A_\iy$ category. By {\it units} of this we mean a family $(e_X\in\cA^0_{XX})_{X\in\obj\cA}$ of degree-zero morphisms such that $\fm^1_X e_X=0$ for $X\in\obj\cA;$ $x=\fm^2_{XXY}(e_X,x)$ for $x\in\cA_{XY}$ with $X,Y\in\obj\cA;$  $x=(-1)^{\deg x}\fm^2_{YXX}(x,e_X)$ for $x\in\cA_{YX}$ with $X,Y\in\obj\cA;$ and $\fm^k_{X_0\dots X_k}(x_1,\dots, x_k)=0$ for $X_0,\dots,X_k\in\obj\cA$ with $k\ge3$ and for $x_1\in\cA_{X_0X_1},\dots,x_k\in\cA_{X_{k-1}X_k}$ containing at least one of $e_{X_1},\dots,e_{X_k}.$

We call $(\cA,\fm)$ a {\it strict} $A_\iy$ category if $\fm^0_X=0\in\cA^1_{XX}$ for every $X\in\obj\cA.$
If it is strict then define the cohomology category $H\cA$ to be the associative category consisting of the same objects; the morphism space $(H\cA)_{XY}:=H^*(\cA_{XY},\fm^1_{XY})$ for $X,Y\in\obj\cA;$ and the product map induced from the $\fm^2$ operators. Define also the category $H^0\cA$ to be the subcategory of $H\cA$ consisting of the same objects and whose every morphism space $H^0\cA_{XY}$ is the degree-zero part $H^0(\cA_{XY},\fm^1_{XY}).$

Let $\cA,\cB$ be filtered strict $A_\iy$ categories and $f:\cA\to\cB$ a filtered $A_\iy$ functor. We call $f$ a {\it strict} $A_\iy$ functor if $f^0_X=0\in \cA^0_{XX}.$ This induces then a functor between the cohomology categories, in the obvious way.

Let $X\in\obj\cA$ be any object and $b\in\cA^1_{XX}$ a degree-one element with $\v b>0.$ We call $b$ a {\it bounding cochain} if $\sum_{k=0}^\iy\fm^k_{X\dots X}(b^{\t k})=0.$ The sum converges because $\v b>0.$
We define then a filtered strict $A_\iy$ category $(\cA',\fn).$ Define $\obj\cA'$ to be the set of pairs $(X,b)$ with $X\in\obj\cA$ and $b\in\cA^1_{XX}$ a bounding cochain. For $Y_0=(X_0,b_0),$ $Y_1=(X_1,b_1)\in\obj\cA'$ set $\cA'_{Y_0Y_1}:=\cA_{X_0X_1}.$ 
For $Y_0=(X_0,b_0),\dots,Y_k=(X_k,b_k)\in\obj\cA'$ define $\fn^k_{Y_0\dots Y_k}:\cA'_{Y_0Y_1}[1]\times\dots\times\cA'_{Y_{k-1}Y_k}[1]\to \cA'_{Y_0Y_k}[1]$ by
\[(x_1,\dots,x_k)\mapsto\ts\sum_{p_0,\dots,p_k=0}^\iy\fm^{k+p_0+\dots+p_k}(b_0^{\t p_0}, x_1, b_1^{\t p_1},\dots,b_{k-1}^{\t p_{k-1}}, x_k, b_k^{\t p_k}).\]
The sum converges because $\v b_0>0,\dots,\v b_k>0.$
Since $\v\fm^{k+p_0+\dots+p_k}\ge0$ it follows that $\v\fn^k_{Y_0\dots Y_k}\ge0;$ that is, $\fn^k_{Y_0\dots Y_k}$ is a {\it filtered} homomorphism.
The $A_\iy$ equations of $(\cA,\fm)$ imply those of $(\cA',\fn).$
Since $\sum_{k=0}^\iy\fm^k(b^{\t k})=0$ it follows that $\fn^0=0.$ Thus $(\cA',\fn)$ is a filtered strict $A_\iy$ category.

If $\cA$ is a filtered $A_\iy$ category with units then the strict $A_\iy$ category $\cA'$ corresponding to it has units too. This is a straightforward extension of the $A_\iy$ algebra case \cite[Lemma 5.2.17]{FOOO}.
\end{dfn}

We introduce now the strict $A_\iy$ category $\cF_\nc(X)$ where `nc' stands for not necessarily closed Lagrangians.
\begin{dfn}\l{dfn: bound coch}
We define the strict $A_\iy$ category $\cF_\nc(X)$ to be that made from $\cC(X)$ by using bounding cochains as above. 
We denote an object of $\cF_\nc(X)$ by a triple $\b=(L,E,b)$ where $(L,E)$ is an object of $\cC(X)$ and $b$ a bounding cochain in $CF^*(E,E).$
We say that $\b$ is {\it supported} on $L$ or that $L$ {\it underlies} $\b.$
We define the $\Z$-graded $\La$-linear associative category $H\cF_\nc(X)$ to be the cohomology category of $\cF_\nc(X).$
We denote by $HF^*(\b_0,\b_1)$ its morphism space between two objects $\b_0,\b_1.$
\end{dfn}
The following is a straightforward extension of the work of Fukaya, Oh, Ohta and Ono \cite[\S7.3]{FOOO} for embedded Lagrangians. For $\k=\R$ or $\C$ the de Rham version of this is made already by Fukaya \cite{Fuk}.
\begin{thm}\l{thm: unit}
The filtered $A_\iy$ categories $\cC(X),\cF_\nc(X)$ have units. \qed
\end{thm}
\begin{rem}\l{rem: unit}
For $\k=\R$ or $\C$ we can make the units directly as Fukaya \cite{Fuk} does, using the de Rham model for \eq{CF}. Strict unitality fails for the singular chain model because the perturbations of Kuranishi spaces of the form \eq{pert} need not be compatible with forgetting marked points. One can however prove that for $(L,E)\in \cC(X)$ there exists a {\it homotopy} unit of $CF^*(E,E)$ whose leading term is equal to the fundamental class of $L.$ This may be done by taking fundamental chains of the underlying Lagrangians in $\cC(X)$ and constructing in a consistent way the homotopies between those perturbed Kuranishi spaces for which some of $x_1,\cdots,x_k$ in \eq{pert} are the fundamental chains, and those obtained from the forgetful maps. The homotopy units become the ordinary units after the homological perturbations.
\end{rem}

We recall the notion of zero objects in $H\cF(X)$ and give a corollary to Theorem \ref{thm: unit}.
\begin{dfn}
We call $\b\in \obj\cF_\nc(X)$ a {\it zero} object if the unit of $HF^*(\b,\b)$ is zero.
\end{dfn}
\begin{cor}\l{cor: unit}
If $\b=(L,E,b)\in \obj\cF_\nc(X)$ is a zero object, the Lagrangian $L\subset X$ has at least one self-intersection point of index $-1.$ 
\end{cor}
\begin{proof}
Since $HF^*(\b,\b)$ is $\Z$-graded it follows that the unit in $HF^0(\b,\b)$ may be cancelled out only by elements of $CF^{-1}(E,E),$
which come from self-intersection points of $L$ of index $-1.$
\end{proof}
Corollary \ref{cor: unit} gives another proof of the following fact.
\begin{cor}[Fukaya, Oh, Ohta and Ono {\cite[Theorem E]{FOOO}}]\l{embedded non-zero}
Every $\b\in \obj\cF_\nc(X)$ supported on an embedded Lagrangian is non-zero. \qed
\end{cor}

Take $\Om^{\otimes2},J$ as in Definition \ref{dfn: Z-grading} so that we can speak of special Lagrangians in $X.$
Consider {\it nearly} special Lagrangians which have phase sufficiently close (in the $C^0$ sense) to a constant function. The estimate \eq{mu} implies then 
\begin{prop}\l{prop: TY0}
If $\b\in \obj\cF(X)$ is supported on a generically immersed nearly special Lagrangian then the Floer cochain group $CF^*(\b,\b)$ is supported in degrees $0,\cdots,n;$ and accordingly, so is the cohomology group $HF^*(\b,\b).$ \qed
\end{prop}
This and Corollary \ref{cor: unit} imply
\begin{cor}\l{cor: special non-zero}
Every $\b\in \obj\cF_\nc(X)$ supported on a nearly special Lagrangian is non-zero. \qed
\end{cor}

We define the Fukaya category which we use most often.
\begin{dfn}\l{dfn: Fuk}
We denote by $\cF(X)\sb \cF_\nc(X)$ the full subcategory of objects supported on {\it closed} Lagrangians; that is, the underlying Lagrangian of an object $\b$ of $\cF(X)$ is a Lagrangian immersion $L\to X$ from a compact manifold $L$ without boundary. 
\end{dfn}

We prove now a Poincar\'e duality theorem. We are in the right context for doing so; but we shall not need the result for the later treatment, and the reader in a hurry can proceed safely to Proposition \ref{prop: shift} below.
\begin{thm}\l{thm: PD}
Let $\b$ be an object of $\cF(X)$ whose local system has one-dimensional fibres. 
Then there exists a non-degenerate pairing
  \begin{equation}
    HF^*(\b,\b) \otimes HF^{n-*}(\b,\b) \to \La.
  \end{equation}
\end{thm}
\begin{proof}
We begin by recalling the notion of filtered $A_\iy$ bimodules.
Let $C$ be a $\Z$-graded $\La$-linear filtered curved $A_\iy$ algebra.
By a {\it bimodule} over $C$ we mean
a $\Z$-graded $\La$ module $\cP$ equipped with operations
\begin{equation}
  \label{eq:bimodule_structure_maps}
  \fm_{\cP}^{r|1|s} :  \left(C[1]\right)^{\otimes r} \otimes \cP \otimes \left(C[1]\right)^{\otimes s} \to  \cP,
\end{equation}
$r,s=0,1,2,\cdots,$ which satisfy the bimodule version of the $A_\infty$ equation. An $A_\infty$ bimodule bimodule is {\it filtered} if it is equipped with a valuation, so that $A_\infty$ bimodule operations do not decrease valuations.
\begin{exam}
The $A_\iy$ algebra $C$ itself has the structure of a {\it diagonal} bimodule with operations
  \begin{equation} \label{eq:product_diagonal_bimodule}
  \fm^{r|1|s} (x_1, \ldots, x_r, p, y_1, \ldots, y_s  ) = (-1)^{r + \sum \deg x_i} \fm^{r+1+s} ( x_1, \ldots, x_r, p, y_1, \ldots, y_s   ),
\end{equation}
$r,s=0,1,2,\cdots;$ $x_1,\cdots,x_r,p,y_1,\cdots,y_r\in C.$
\end{exam}
\begin{exam}
If $\cP$ is an $A_\iy$ bimodule over $C$ then its {\it dual} 
$  \cP^{\vee}:= \hom(\cP^{\vee}, \La)$ has the structure of a bimodule given by
\begin{equation} \label{eq:structure_maps_dual_bimodule}
  \fm_{\cP^{\vee}}(y_1, \ldots, y_s ,  \psi, x_1, \ldots, x_r)(p) = (-1)^\#  \psi\left( \fm_{\cP}(x_1, \ldots, x_r, p,  y_1, \ldots, y_s )\right),
\end{equation}
$r,s=0,1,2,\cdots;$ $x_1,\cdots,x_r,y_1,\cdots,y_r\in C,$ $\ps\in\cP^\vee,$ $p\in \cP$ and
$\#=1+\deg\ps+(\deg y_1+\dots+\deg y_s)(\deg x_1+\dots+\deg x_r+\deg p+\deg \ps).$
\end{exam}

Next let $\b=(L,E,b)\in \obj\cF(X)$ and apply to the single brane $(L,E)$ the proof of Theorem \ref{thm: Fuk cat}.
Then we get a gapped $A_\iy$ algebra $C.$
We use the same notation for the diagonal bimodule, and denote by $C^\vee$ its dual bimodule.
Recall that the $\fm^1$ of $C$ has leading term equal to the usual differential, whose cohomology group is
\e CF^*(E,E):=H^*(L\t_XL, \hom(E,E))[-\mu_L].\e
We prove
\begin{prop}
There exists a $\Z$-graded $\La$-linear natural isomorphism of graded vector spaces
\e\l{PD} CF^*(E,E)\cong CF^*(E,E)^\vee[-n].\e
\end{prop}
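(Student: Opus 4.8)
The plan is to establish the isomorphism \eqref{PD} as a consequence of Poincar\'e--Lefschetz duality on the fibre product $L\t_XL$, upgraded to keep track of local systems and gradings. First I would recall that $L\t_XL$ decomposes as the disjoint union of the diagonal copy of $\hat L$ (of dimension $n$, compact and without boundary since $L$ is closed) together with the finite set of self-intersection pairs; the Legendrian-collar hypothesis is vacuous here because $L$ is closed. On the diagonal component, classical Poincar\'e duality for the closed oriented (or $\Z/2$-oriented, in characteristic $2$) $n$-manifold $\hat L$ with coefficients in the local system $\hom(E,E)$ gives
\e\l{PDdiag} H^*(\hat L,\hom(E,E))\cong H_{n-*}(\hat L,\hom(E,E)^\vee)\cong H^{n-*}(\hat L,\hom(E,E))^\vee,\e
where the last step uses the universal coefficient theorem over the field $\La$ together with the canonical self-duality $\hom(E,E)\cong\hom(E,E)^\vee$ coming from the trace pairing on endomorphisms of the finite-rank local system $E$ (this is where unitarity of $E$ is convenient, giving a parallel, valuation-preserving identification). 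On a self-intersection component, the pair $(x_1,x_2)$ is matched with the opposite pair $(x_2,x_1)$, and the Maslov indices satisfy $\mu_L(x_1,x_2)+\mu_L(x_2,x_1)=n$ by \eqref{mu0} (the angles $\th_j$ at one point are $\pi-\th_j$ at the other, and the phase difference changes sign); hence the degree shift $[-\mu_L]$ is exchanged with $[-(n-\mu_L)]=[-n]\circ[\mu_L]$ under $*\mapsto n-*$, and the fibres $\hom(E_0,E_1)$ and $\hom(E_1,E_0)$ are again dual via the trace pairing. Assembling the diagonal and off-diagonal contributions and being careful about signs yields the graded isomorphism \eqref{PD}.

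Second, I would check naturality: the pairing must be natural in $\b$ in the sense needed to promote it to the bimodule statement that the rest of the argument (sketched after the excerpt) requires, namely that \eqref{PD} is an isomorphism of $A_\iy$ bimodules $C^\vee[-n]\cong C$ over $C$, not merely of graded vector spaces. At the cohomology level this amounts to compatibility with the $\fm^2$-module actions on either side, which reduces to compatibility of the trace/intersection pairing with the cup product and the triangle product counts of holomorphic discs; this is the standard open-string analogue of the cyclic symmetry of the $A_\iy$ structure, and I would invoke the cyclic (or proper Calabi--Yau) structure on $\cC(X)$ in the form already used by Fukaya--Oh--Ohta--Ono for embedded Lagrangians, noting that nothing in their argument uses embeddedness once one works on $L\t_XL$ rather than on $L$ itself.

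The main obstacle I anticipate is orientation and sign bookkeeping on the self-intersection components, and more precisely making the identification $\hom(E_0,E_1)\cong\hom(E_1,E_0)^\vee$ compatible with the grading shifts and with the relative spin / background-class data when $\k$ has characteristic $\ne 2$. The Maslov relation $\mu_L(x_1,x_2)+\mu_L(x_2,x_1)=n$ is clean, but the orientation lines attached to the two points (which enter the Floer differential and hence the honest $CF^*$ rather than just its cohomology) are dual to one another only up to a sign depending on $n$ and on the local spin data, and getting this exactly right — so that the pairing is genuinely nondegenerate and not merely nondegenerate up to a line that could be trivial — is the delicate point. In characteristic $2$ this difficulty evaporates and the argument is essentially Poincar\'e duality on a closed manifold plus a combinatorial matching of self-intersection pairs; in general one pushes the sign analysis through by the same methods as in \cite{FOOO}, which is why I would present this as a sketch and defer the full sign verification to those references.
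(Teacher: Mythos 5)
Your argument is correct and is essentially the paper's own proof: both decompose $L\t_XL$ into the diagonal copy of $\hat L$, where classical Poincar\'e duality with coefficients in the self-dual local system $\hom(E,E)$ applies, and the off-diagonal self-intersection pairs, where $(x_1,x_2)$ is paired with $(x_2,x_1)$ using $\mu_L(x_1,x_2)+\mu_L(x_2,x_1)=n$. The additional material on the bimodule upgrade and orientation-line signs belongs to the subsequent theorem on the bimodule homomorphism $\Ps:C\to C^\vee[-n]$ rather than to this statement, but it does not affect the correctness of what you prove here.
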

\begin{proof}
Recall that Poincar\'e duality makes sense because of the relative spin structure data, so that there is a pairing
\e H^*(\hat{L}, \hom(E,E))\t H^*(\hat{L},\hom(E,E))\to \La[-n]\e
where we do not have to change the local system term because $\hom(E,E)^\vee\cong\hom(E,E).$
For an off-diagonal self-intersection pair $(x,y)\in L\t_X L$ there is also a pairing
\e (x,y)\ot (y,x)\to \La[-n]\e
which completes the proof.
\end{proof}
The following is the heart of the proof of Theorem \ref{thm: PD}.
\begin{lem}\l{lem: bimod hom}
There exists a bimodule homomorphism
\e\l{bimod} \Ps:C\to C^\vee[-n]\e
such that the map $\Ps^{0|1|0}:C\to C^\vee[-n]$ has leading term equal to \eq{PD}.
\end{lem}
\begin{proof}
  Recall that $C$ is by construction an inverse limit of some gapped $A_K$ algebras, which we denote by  $(C_K)_{K=0}^\iy.$
  Regard this as a diagonal bimodule with operations of the form
  \e\fm^{r|1|s}=\sum \fm^{r|1|s}_\ga T^\ga,\e
  $r,s\ge0$ and $[r+s+1,\ga] \leq K.$
  For each $K$ we construct a bimodule map
  $       \Ps_K: C_K \to C^{\vee}_{K}$
which consists of a collection of
\e\l{bimod K}   \Ps_K^{r|1|s}: C_K^{\t r}\t  C_K\t C_K^{\t s}  \to C^{\vee}_{K}[-n]\e
indexed by non-negative integers $r$ and $s$.
We regard the output as an input by duality, so that we have to construct a compatible collection of maps
\e\l{Ps_K}  C_K   \t C_K^{\t r}\t  C_K\t C_K^{\t s}  \to \La[-n].\e
Here we use the hypothesis that the local system of $\b$ has one-dimensional fibres.

In order to construct this map, we use the stable moduli spaces of pseudo-holomorphic discs with $r+s+2$ boundary points, ordered as in Figure \ref{fig:duality}: the marked point on the left arrow corresponds to the input, and that on the right arrow corresponds to the output.
      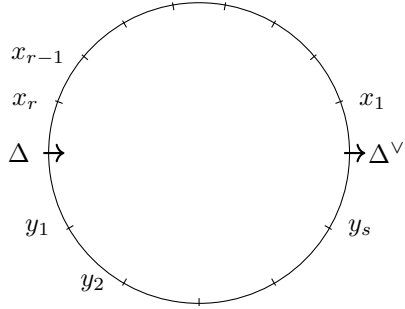
\begin{figure}[h]
        \centering
        \begin{tikzpicture}
          \draw  (0,0) circle [radius=2];
          \node[label=right:{$C^\vee$}] (D) at (0:2) {};
          \node[label=left:{$C$}] (Dv) at (180:2) {};
          \draw[->, thick] (Dv)++(0:-2pt) -- ++(0:8pt);
          \draw[->, thick] (D)++(180:2pt) -- ++(180:-8pt);
          \node[label=left:{$x_r$}] (x1) at (160:2) {};
          \node[label=left:{$x_{r-1}$}] (x2) at (140:2) {};
          \node[label=left:{$y_1$}] (x1) at (210:2) {};
          \node[label=left:{$y_2$}] (x1) at (240:2) {};
          \node[label=right:{$x_1$}] (x1) at (20:2) {};
          \node[label=right:{$y_s$}] (x1) at (330:2) {};
          \foreach \i in {1,...,8}
          \draw[thin] (\i*20:2)++(\i*20:1pt) -- ++(\i*20:-3pt);
          \foreach \j in {7,...,12}
          \draw[thin] (\j*30:2)++(\j*30:1pt) -- ++(\j*30:-3pt);
        \end{tikzpicture}
        \caption{Moduli spaces of discs for the duality map}
        \label{fig:duality}
      \end{figure}
      We choose inductively a family of perturbation data for the fibre products of the form
      \e\cM_\ga^{r+s+2}\t_\ev(q \t x_1\t\cdots \t x_r \t p \t \tau_1\t \cdots \t \tau_s)\e
      where $\ev$ denotes the evaluation maps at the boundary marked points ordered along the boundary and $q,x_1,\dots,x_r,p,y_1,\dots,y_s$ are singular simplices in $C$ with $[r+s+1,\ga]\le K.$
We make these compatible with the previous perturbations of \eq{pert} and those for the lower-order maps relative to $r,s.$ Then we get a $A_K$ bimodule homomorphism, which we denote by $\Ps_K: C_K \to C^{\vee}_{K}[-n].$

From the $A_K$ bimodule maps of this form we construct an $A_\iy$ bimodule map, by an induction and obstruction theory argument similar to that in the proof of Theorem \ref{thm: Fuk cat}. We give a brief account of this; for more details see Fukaya, Oh, Ohta and Ono \cite[\S5.2]{FOOO} (who use their results to prove the deformation invariance of Floer cohomology groups \cite[Theorem D]{FOOO} which is in fact a homotopy equivalence of bimodules).

     Recall that we have constructed an $A_K$ homotopy equivalence between $C_K$ and the $A_K$ truncation of $C_{K+1}.$
     From this we get an $A_{K+1}$ algebra $C_{K,K+1}$ which is $A_{K+1}$ homotopy equivalent to $C_{K+1}$ and whose $A_K$ truncation is $C_{K}.$ 
     The $A_{K+1}$ bimodule map
$                C_{K+1} \to C^{\vee}_{K+1}[-n]$
which induces an $A_{K+1}$ bimodule map (over the algebra $C_{K,K+1}$)
           \begin{equation}
\l{stage K}               C_{K,K+1} \to C^{\vee}_{K,K+1}[-n].
      \end{equation}
      which we denote still by $\Ps_K.$ In the previous stage we have constructed an $A_K$ bimodule map
           \begin{equation}
             \Ps_{K-1}:  C_{K-1,K} \to C^{\vee}_{K-1,K}[-n].
      \end{equation}
      This is homotopic to the $A_K$ reduction of \eq{stage K} by a choice of homotopy between the defining perturbation data. Using homological perturbation, and proceeding by induction, we get the desired system of bimodule maps.
\end{proof}

Combining Lemma \ref{lem: bimod hom} with the $A_\iy$ bimodule version of Whitehead's theorem \cite[Theorem 5.2.35]{FOOO} we get
\begin{cor}
$\Ps$ is a homotopy equivalence of bimodules.\qed
\end{cor}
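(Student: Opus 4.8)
The plan is to deduce the statement from the $A_\iy$-bimodule version of Whitehead's theorem, \cite[Theorem 5.2.35]{FOOO}: via its leading-order (that is, $T^0$-reduction) criterion, a homomorphism $\Ps$ of filtered $A_\iy$-bimodules over the curved filtered $A_\iy$-algebra $C$ is a homotopy equivalence as soon as the $T^0$-reduction of its zeroth component $\Ps^{0|1|0}$ is a quasi-isomorphism of the associated $T^0$-reduced cochain complexes. So the entire argument reduces to checking this quasi-isomorphism property.

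To carry this out I would first record that the $T^0$-reduction of $(C,\fm^1)$ is the honest cochain complex $(C,\fm^1_0)$ whose cohomology is, by construction, the group $CF^*(E,E)=H^*(L\t_XL,\hom(E,E)[-\mu_L])$, and, dualising, that the $T^0$-reduction of $C^\vee[-n]$ computes $CF^*(E,E)^\vee[-n]$. By Theorem \ref{thm: bimod hom} the leading ($T^0$) term of $\Ps^{0|1|0}$ induces on these cohomology groups precisely the Poincar\'e duality isomorphism \eq{PD}; hence it is a quasi-isomorphism of the two $T^0$-reduced complexes, which is exactly the hypothesis of \cite[Theorem 5.2.35]{FOOO}. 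Applying that theorem yields that $\Ps$ is a homotopy equivalence of bimodules.

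The remaining points are purely formal: that the energy filtrations on $C$ and on $C^\vee[-n]$ are exhaustive and gapped, that $\La^0$ is complete (so that the cited Whitehead theorem applies verbatim even in the presence of curvature), and that the pairing defining $C^\vee$ respects these filtrations --- all of which is contained in \S\ref{sec: Fuk} and Definition \ref{dfn: loc}. There is no real obstacle left here: all of the geometric content --- the construction of $\Ps$ by counting the pseudoholomorphic discs of Figure \ref{fig:duality} and the identification of its leading term with Poincar\'e duality --- was already carried out in Theorem \ref{thm: bimod hom}, so this corollary is a formal consequence.
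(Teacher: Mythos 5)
Your proposal is correct and is exactly the paper's argument: the corollary is obtained by applying the filtered $A_\iy$-bimodule Whitehead theorem \cite[Theorem 5.2.35]{FOOO} to the map of Theorem \ref{thm: bimod hom}, whose leading term induces the Poincar\'e duality isomorphism \eq{PD} and is therefore a quasi-isomorphism of the $T^0$-reduced complexes. The additional formal checks you list (gappedness and completeness of the filtrations) are implicit in the construction of \S\ref{sec: Fuk} and are not spelled out in the paper either.
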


The bimodule homotopy equivalence $\Psi : C\to C^\vee[-n]$ induces for every bounding cochain $b\in C^1$ 
another bimodule homotopy equivalence
\e\Psi_b : C\to C^{\vee}[-n],\e
relative to the $b$-deformed $A_\iy$ structure; and for $x_1,\cdots,x_r,p, y_1,\cdots, y_s\in CF^*(E,E)$ the term
\e
    \Psi^{r|1|s}_{b}(x_1, \ldots, x_r, p, y_1, \ldots, y_s  ) 
    \e
    is given by 
\e    \sum \Psi(b, \ldots, b, x_1, b, \ldots, b, x_r, b, \ldots, b, p, b, \ldots, b, y_1,b, \ldots, b , y_s,b , \ldots ,b). \e
Hence, passing to the cohomology groups, we get an isomorphism
\e HF^*(\b,\b)\cong HF^*(\b,\b)^\vee[-n]\e
which completes the proof of Theorem \ref{thm: PD}.
\end{proof}

Theorem \ref{thm: PD} implies
\begin{cor}\l{cor: PD}
Let $\b$ be a non-zero object of $\cF(X)$ whose local system has one-dimensional fibres. 
Then $HF^n(\b,\b)\ne0.$ \qed
\end{cor}
\begin{rem}\l{rem: PD}
This is proved already for embedded Lagrangians without local systems by Fukaya, Oh, Ohta and Ono \cite[Theorem E]{FOOO}.
The second author \cite[Lemma 4.4]{I} gives two other proofs, including immersed Lagrangians. One uses the cyclic symmetry \cite{FukCyc} assuming that the ground field $\k$ contains $\R.$ The other uses open-closed maps \cite[Theorem 6.4.2]{FOOO}. 
\end{rem}

Corollary \ref{cor: PD} gives a sufficient condition for the hypothesis of the following proposition.
\begin{prop}\l{prop: shift}
Let $\b,\b'\in \obj\cF(X)$ be isomorphic in $H^0\cF(X)$ and supported on closed special Lagrangians of phase $\ph,\ph'\in\R$ respectively which need not be $0$ modulo $2\pi\Z.$ 
Suppose $HF^n(\b,\b)\ne0.$
Then $\ph'=\ph.$ \qed
\end{prop}
\begin{proof}
Put $\th:=\frac{\ph'-\ph}{2\pi}\in\R.$ Recall then from \eq{mu0} that the graded vector space $HF^k(\b,\b)\cong HF^k(\b,\b')$ vanishes unless $k\in [\th,n+\th].$ On the other hand, recall from Corollary \ref{cor: special non-zero} that $\b,\b'$ are non-zero objects so that both $HF^0(\b,\b)$ and $HF^n(\b,\b)$ are non-zero. So $0\in[\th,n+\th]$ and $n\in[\th,n+\th],$ which imply $\th=0.$ 
\end{proof}  

\subsection{Thomas--Yau Theorems}\l{sec: TY}
We recall a few generalizations of Thomas--Yau's theorem proved by the second author \cite{I}.
In this subsection we work with Calabi--Yau manifolds and integrable complex structures.
\begin{prop}\l{prop: TY}
 Let $X$ be a Calabi--Yau manifold of complex dimension $n.$ Let $\b,\b'\in \obj\cF(X)$ be supported on closed irreducibly immersed special Lagrangians $L,L'$ of phase $\ph,\ph'$ respectively which need not be $0$ modulo $2\pi\Z.$ Suppose that either $L$ or $L'$ embedded. Then the following hold. 
\iz
\item[\bf(i)] If $\ph=\ph'$ then either $L=L'\subset X$ or 
$HF^*(\b,\b')$ is supported in degrees $1,\cdots,n-1.$
\item[\bf(ii)] If $\b,\b'$ are isomorphic in $H^0\cF(X)$ with $HF^n(\b,\b)\ne0$ then $L=L'\subset X.$
\iz
\end{prop}
\begin{rem}\l{rem: TY}
The point of (i) is that the degrees are strictly $>0$ and $<n.$
It follows immediately from \eq{mu} that they belong to $\{0,\dots,n\}.$
\end{rem}
\begin{proof}[Proof of Proposition $\ref{prop: TY}$]
Let $L$ be embedded. Then we can work in a Weinstein neighbourhood of $L$ which is a Stein manifold. This is the condition (ii) of the second author's result \cite[Theorem 6.2]{I}, which implies (i) above. We prove (ii) now. By Proposition \ref{prop: shift} the two special Lagrangians have the same phase. Hence it follows by (i) that either $L=L'$ or $HF^*(\b,\b')$ is supported in $1,\dots,n-1.$ But the latter is impossible because the graded vector space $HF^*(\b,\b')\cong HF^*(\b,\b)$ has two non-trivial degrees with difference $n.$ So $L=L'.$
\end{proof}

We recall also the following version.
We say that a Calabi--Yau manifold $X$ is {\it real analytic} if the K\"ahler form of $X$ is real analytic with respect to the underlying real analytic structure of the complex manifold $X.$
\begin{prop}\l{prop: TY2}
Let $X$ be a real analytic Calabi--Yau manifold and $L,L'\subset X$ two closed irreducibly-immersed Lagrangians which are, near $L\cap L',$ both special and graded by $0\in\R.$
Then either $L,L'$ agree with each other near $L\cap L'$ or any pair of objects supported on them have Floer cohomology supported in degrees $1,\dots,n-1.$  
\end{prop}
\begin{proof}
Suppose $L\ne L'$ and denote by $S\sb L\cap L'$ the set of intersection points at which $L,L'$ have at least one common tangent space. The second author \cite[Theorem 5.1(i)]{I} proves then that there exists a neighbourhood $U\subset X$ of $S$ and arbitrarily-small Hamiltonian perturbations of $L,L'$ respectively which intersect generically each other with no intersection point in $U$ of index $0$ or $n.$
On the other hand, these have no intersection point of index $0$ or $n$ outside $U$ \cite[Lemma 2.7]{I}.
So they have no intersection point at all of index $0$ or $n.$
Since $L,L'$ are both graded by $0\in\R$ it follows by \eq{mu} that every intersection point of the two perturbed Lagrangians has degree in $\{1,\dots,n-1\}.$ So their Floer complex is supported in degrees $1,\dots,n-1,$ which completes the proof.
\end{proof}

\subsection{Hamiltonian Continuations}\l{sec: Ham}
We return now to the symplectic topology context. 
We recall the definition of homotopies between strict $A_\iy$ functors.
\begin{dfn}
Let $\cA,\cB$ be strict $A_\iy$ categories. Then we can define the functor category $\fct(\cA,\cB)$ which is a strict $A_\iy$ category whose objects are strict $A_\iy$ functors from $\cA$ to $\cB.$
We say that two objects $f,g\in\obj\fct(\cA,\cB)$ are {\it homotopic} if they are isomorphic in $H^0\fct(\cA,\cB).$
Here we do not impose any conditions on the object sets as we do in Definition \ref{dfn: homotopies}.

We say that a strict $A_\iy$ functor $f:\cA\to\cB$ is a {\it homotopy equivalence} if there exists $g\in\fct(\cB,\cA)$ such that $g\cm f$ is isomorphic in $H^0\fct(\cA,\cA)$ to the identity on $\cA$ and $f\cm g$ isomorphic in $H^0\fct(\cB,\cB)$ to the identity on $\cB.$
Seidel \cite[Theorem 2.9]{Seid3} proves that $f$ is a homotopy equivalence if and only if the induced functor $Hf:H\cA\to H\cB$ is an equivalence of categories. 
\end{dfn}

We explain now the effect of symplectomorphisms of $X$ upon the Fukaya categories.
Recall from Definition \ref{dfn:Liouv} that the radius function $r_\la$ is defined near $\d X.$
Note also that symplectomorphisms act naturally upon the objects of $\cC(X).$
\begin{lem}\l{lem: continuation}
Let $\ph:X\to X$ be a symplectomorphism isotopic to the identity and which agrees near $\d X$ with the Hamiltonian diffeomorphism of $cr_\la$ for some $c>0.$
Suppose that $\ph$ induces a bijection of $\cC(X).$
Then there exists an equivalence
\e\l{ph*}\ph_*:H\cF_\nc(X)\to H\cF_\nc(X)\e
such that for every $\b=(L,E,b)\in \obj\cF_\nc(X)$ the image $\ph\b$ is given by a bounding cochain $\ph_*b$ for $(\ph_*L,\ph_*E)\in\cC(X).$
The functor $\ph_* $  is compatible with composition in the sense that, for a pair $\ph$ and $\ps$ of boundary-linear symplectomorphisms, 
the composite $\ps_* \circ \ph_*$ is naturally isomorphic to $(\ps\circ\ph)_*.$
\end{lem}
\begin{proof}
The strict $A_\iy$ category $\cF_\nc(X)$ is made from a gapped $A_\iy$ category $\cC(X)$ whose definition involves the choice of an almost complex structure $J$ and other data.
We can then define another gapped category by using $\ph_*J.$
We want to show that the two gapped $A_\iy$ categories are homotopy equivalent. This is done by Fukaya, Oh, Ohta and Ono \cite[Theorem A]{FOOO} and by Akaho--Joyce \cite[Theorem 11.2(a)]{AJ} for gapped $A_\iy$ algebras by interpolating between the almost complex structures;  we do so while maintaining compatibility with the contact boundary, thus ensuring compactness. 
They use the obstruction theory arguments for gapped $A_N$ functors \cite[Lemma 7.2.129]{FOOO} for gapped $A_\iy$ algebras. We extend them to gapped $A_\iy$ categories in the same way as in the proof of Theorem \ref{thm: ob1}. 
Then we get a homotopy equivalence of the two gapped categories.
Fukaya \cite[Proposition 13.18]{Fuk} proves that this induces a homotopy equivalence of the corresponding strict categories.
There is thus an equivalence \eq{ph*} of the cohomology categories. 

To find the natural isomorphism between $\ps_* \circ \ph_*$ and $(\ps\circ\ph)_*$ we begin with  homotopies of gapped $A_N$ categories. We lift them to gapped $A_\iy$ categories by an obstruction theory for homotopies of homotopies. This is done by Fukaya, Oh, Ohta and Ono \cite[Corollary 7.2.219]{FOOO} for gapped $A_N$ algebras. We extend it in the same way as in the proof of Theorem \ref{thm: ob1}. Then we get a homotopy between  $\ps_* \circ \ph_*$ and $(\ps\circ\ph)_*$ as gapped $A_\iy$ functors. This induces again a homotopy between the strict $A_\iy$ categories, which induces in turn a natural isomorphism that we want.
\end{proof}

There is more to say if the symplectomorphism $\ph$ in Lemma \ref{lem: continuation} is a Hamiltonian diffeomorphism:
\begin{prop}\l{prop: continuation} 
Let $\ph:X\to X$ be the Hamiltonian diffeomorphism of a smooth function $X\to\R$ which agrees near $\d X$ with $c r_\la$ for some $c>0.$
Suppose that $\ph$ induces a bijection of $\cC(X).$
Then there exists a natural transformation $\ka_\ph$ from $\ph_*$ to the identity functor; that is,
for any $\b_1,\b_2\in\cF_\nc(X)$ there is a commutative diagram
\begin{equation} \label{eq:continuation_is_natural}
  \begin{tikzcd}
    HF^*(\b_1,\b_2)  \ar[dr,swap, "\kappa_{\phi}\b_1\circ"] \ar[r,"\phi_*"] & HF^*( \ph\b_1, \ph\b_2) \ar[d, "\circ\kappa_\ph\b_2 "] \\
  &  HF^*(\ph\b_1,\b_2).
  \end{tikzcd}
\end{equation}
If either $\b_1$ or $\b_2$ are supported on a compact Lagrangian,
then these arrows are isomorphisms.
If there is another such Hamiltonian diffeomorphism $\ps:X\to X$
then 
\e\l{ka functorial} \ka_{\ps\ph}\b=\ka_\ps(\ph\b)\cm \ka_\ph \b,\;\;\;\b\in\obj\cF(X).\e
\end{prop}
\begin{proof}
We explain how to deduce the statement from the result of Fukaya, Oh, Ohta and Ono \cite[Theorem G (G.4)]{FOOO} who prove that pairs of Hamiltonian diffeomorphisms induce bimodule homomorphisms of Floer groups (Akaho--Joyce explain in \cite[Theorem 13.6]{AJ} how to extend this to immersed Lagrangians, and it is also straightforward to include local systems). As in Lemma \ref{lem: continuation}, the construction of the bimodule homomorphisms involves pseudo-holomorphic curve equations with respect to varying almost complex structures on the target, and we require their compatibility with the contact boundary to ensure compactness.

Define $\ka_\ph \b\in HF^*(\ph \b,\b)$ to be the image of the unit of $HF^*(\b,\b)$ under the bimodule homomorphism $(\ph,\id)_*:HF^*(\b,\b)\to HF^*(\ph\b,\b)$ induced by the pair $(\ph,\id)$ of Hamiltonian diffeomorphisms.
We prove then that the diagram \eq{eq:continuation_is_natural} commutes.
The pair $(\ph,\id)$ of Hamiltonian diffeomorphisms induces the bimodule homomorphism $(\ph,\id)_*:HF^*(\b_2,\b_2)\to HF^*(\ph\b_2,\b_2).$ In particular, this is a left module homomorphism and there is a commutative diagram
\begin{equation}\l{ph,id}
\begin{tikzcd}
HF^*(\b_1,\b_2)\times HF^*(\b_2,\b_2)\ar[d]\ar[r]& HF^*(\b_1,\b_2)\ar[d]\\
HF^*(\ph \b_1,\ph \b_2)\times HF^*(\ph \b_2,\b_2)\ar[r]& HF^*(\ph\b_1,\b_2).
\end{tikzcd}
\end{equation}
Consider now the direct sum $\b_1\op\b_2.$ If these are supported on distinct Lagrangians $L_1,L_2$ respectively then regard $\b_1\op\b_2$ as an object of $\cF_\nc(X)$ supported on $L_1\cup L_2.$ If they are supported on the same Lagrangian then regard it as an object of $\cF_\nc(X)$ with local system $E_1\op E_2$ where $E_1,E_2$ are the local systems of $\b_1,\b_2$ respectively.
In either case the pair $(\ph,\id)$ induces a bimodule homomorphism $(\ph,\id)_*:HF^*(\b_1,\b_1\op\b_2)\to HF^*(\ph \b_1,\b_1\op \b_2).$ In particular, this is a right module homomorphism and there is a commutative diagram
\[
\begin{tikzcd}
HF^*( \b_1,\b_1\op\b_2)\times HF^*(\b_1\op\b_2,\b_1\op\b_2)\ar[d]\ar[r]& HF^*(\b_1,\b_1\op\b_2)\ar[d]\\
HF^*(\ph \b_1,\b_1\op \b_2)\times HF^*(\b_1\op \b_2,\b_1\op\b_2)\ar[r]& HF^*(\ph\b_1,\b_1\op\b_2).
\end{tikzcd}
\]
We look at the following component, which commutes too:
\begin{equation}\l{ph,id2}
\begin{tikzcd}
HF^*( \b_1,\b_1)\times HF^*(\b_1,\b_2)\ar[d]\ar[r]& HF^*(\b_1,\b_2)\ar[d]\\
HF^*(\ph \b_1,\b_1)\times HF^*(\b_1,\b_2)\ar[r]& HF^*(\ph\b_1,\b_2).
\end{tikzcd}
\end{equation}
This combined with \eq{ph,id} implies the commutative diagram \eq{eq:continuation_is_natural}.

We finally prove \eq{ka functorial}.
Using \eq{ph,id2} with $\ps$ in place of $\ph,$ $\ph\b$ in place of $\b_1,$ and $\b$ in place of $\b_2$ we get a commutative diagram
\begin{equation}
\begin{tikzcd}
HF^*(\ph\b,\ph\b)\times HF^*(\ph\b,\b)\ar[d]\ar[r]& HF^*(\ph\b,\b)\ar[d]\\
HF^*(\ps\ph \b,\ph\b)\times HF^*(\ph\b,\b)\ar[r]& HF^*(\ps\ph\b,\b).
\end{tikzcd}
\end{equation}
Consider the unit of $HF^0(\ph\b,\ph\b)$ and $\ka_\ph\b\in HF^*(\ph\b,\b)$ and look at the image of this pair. Then we see that
\e\l{ka psph}\ka_\ps(\ph\b)\cm\ka_\ph\b=(\ps,\id)_*\ka_\ph\b=(\ps,\id)_*(\ph,\id)_*\b.\e
But Fukaya, Oh, Ohta and Ono \cite[Theorem G (G.4)]{FOOO} prove $(\ps,\id)_*(\ph,\id)_*=(\ps\ph,\id)_*$ so the right-hand side of \eq{ka psph} is equal to $(\ps\ph,\id)_*\b=\ka_{\ps\ph}\b.$
This completes the proof.
\end{proof}

\subsection{Wrapped Fukaya Categories}\l{sec: loc}
In general, if $\cA$ is an $A_\iy$ category and $Z \sb H^0\cA$ a set of morphisms, 
there exists a universal pair consisting of an $A_\iy$ category $\cA_{Z^{-1}}$ and an $A_\iy$ functor $\cA\to\cA_{Z^{-1}}$ such that every element of $Z$ becomes invertible in $H^0 \cA_{Z^{-1}}.$ The key property that we shall use is the fact that this construction is compatible with passage to triangulated categories in the sense that there is a natural equivalence of triangulated categories
\begin{equation}
  H^0 \left( \cA_{Z^{-1}} \right) \cong (H^0 \cA)_{Z^{-1}}
\end{equation}
where the right-hand side is the localisation of a triangulated category.

The next result is a key computational tool for computing Fukaya categories from localisations, and also implies the above equivalence of triangulated categories in this specific context:
\begin{prop}\l{prop: loc}
Let $\cA$ be an $A_\iy$ category,
and $f:H\cA\to H\cA$ an equivalence.
For objects $a,b\in\obj\cA$ write $\cA(a,b)$ the morphism space of the pair $(a,b).$
Let $\kappa$ be a natural transformation from $f$ to the identity functor;
and $\cA_{\kappa^{-1}}$ the localisation of $\cA$ by the set $\{\kappa_a \in \cA(fa,a)\}_{a\in\obj\cA}$.
Then for any two objects $a,b\in\obj\cA$ there exist natural isomorphisms
\begin{multline}
  \l{HW colim}
H\cA_{\kappa^{-1}}(a,b)\cong \dirlim_{i\to\iy} H\cA(f^ia,b) \cong 
 \dirlim_{j\to\iy} H\cA(a,f^{-j}b)
\cong \dirlim_{i,j\to\iy} H\cA(f^ia,f^{-j}b)    
\end{multline}
where $f^{-1}:H\cA\to H\cA$ is an inverse to $f.$
Also for $a,b,c\in\obj\cA$ there exists a commutative diagram
\begin{equation}\l{products in HW}
\begin{tikzcd}
\dirlim_{i\to\iy}H\cA(f^ia,b)\ot\dirlim_{j\to\iy}H\cA(b,f^{-j}c)\ar[r]\ar[d,"\rotatebox{270}{$\sim$}"]&
\displaystyle
\dirlim_{i,j\to\iy}H\cA(f^ia,f^{-j}c)\ar[d,"\rotatebox{270}{$\sim$}"]\\
\displaystyle
H\cA_{\kappa^{-1}}(a,b)\ot H\cA_{\kappa^{-1}}(b,c)\ar[r]& H\cA_{\kappa^{-1}}(a,c)
\end{tikzcd}
\end{equation}
where the horizontal arrows are products maps and the vertical isomorphisms are obtained from those of \eq{HW colim}.
\end{prop}
\begin{proof}
Assuming that the category $ \cA$ is triangulated, the localisation $\cA_{Z^{-1}} $  may be constructed as the categorical quotient of $\cA$ by the subcategory consisting of the cones of all morphisms $\kappa : a \to f(a)$.  Morphisms in the quotient category are then computed by the direct limit of the groups $H\cA(a,f^{-j}b) $ as proved in \cite[Lemma 7.18]{SeidLec}. The fact that this is isomorphic to the direct limit of the groups $H\cA(f^ia,b) $ is then a consequence of iteratively applying the functor, and the expression in terms of the direct limit of the groups $H\cA(f^ia,f^{-j}b)$ is implied by the constancy of the colimit with respect to either variable. The expression for the product is an immediate consequence of functoriality. 
\end{proof}
We shall also need the following observation:
\begin{lem} \label{lem:localisation_does_not_change}
  Assume that $K \sb Z\sb H^0\cA$ are two sets of morphisms such that for each element $\beta \in Z$, there exists  a morphism $\gamma$ such that $\beta \circ \gamma$ is a composition of morphisms in $K$. 
  Then the natural functor $H\cA_{K^{-1}} \to H\cA_{Z^{-1}}$ is an equivalence.
\end{lem}
\begin{proof}
  It suffices to prove that $ \cA_{K^{-1}} $ satisfies the universal property of the localisation away from $Z$, i.e.\ that every morphism in $Z$ is already invertible in $ \cA_{K^{-1}} $. This is immediate from the assumption because a composition of morphisms in $K$ becomes invertible  in $ \cA_{K^{-1}} $, and the only way for a product of morphisms to be invertible is if both factors are so.
\end{proof}

We now arrive at the definition of the wrapped category in the contact-type setting:
\begin{dfn}\label{localisation}
Let $\Ph$ be a set of Hamiltonians $\ph:X\to\R$ which are linear at the boundary in the sense that they agree with  $\ph=cr,$ for some constant $c>0,$ near $\d X$. Assume that this set contains Hamiltonians whose slopes are arbitrarily large.
Identify these with the time-one maps they generate, and suppose that if $\ph\in\Ph$ and $E\in \cC(X)$ then $\ph_*E \in\cC(X).$
Then $\cW(X)$ is defined as the {\it localisation} of $\cF_\nc(X)$ by the set
$\{\ka_\ph\in HF^0(\ph\b,\b):\ph\in\Ph, \b\in \obj\cF_\nc(X)\}$
of continuation morphisms.
\end{dfn}

As a consequence of Lemma \ref{lem:localisation_does_not_change} this definition does not depend on the set of Hamiltonians which are chosen: if we add or subtract a Hamiltonian diffeomorphism to the set $\Phi$, then the resulting localisation does not change because of the existence of continuation maps to Hamiltonians of larger slope. 

The closed Fukaya category $\cF(X)$ is embedded naturally as a full subcategory of $\cW(X)$
because the continuation morphisms are invertible in $H^0\cF(X).$

\subsection{Exact Lagrangians}\l{sec: ex}
Suppose now that the Liouville $1$-form $\la$ is given globally on $X$ so that we can speak of exact Lagrangians in $(X,\la)$ and we can make the following definition.
\begin{dfn}
Denote by $\cF_\ex(X)\sb \cF_\nc(X)$ the full subcategory of those objects $(L,E)$ for which
\iz
\item
the Lagrangian $L\subset X$ is embedded and exact; that is, $L$ has no self-intersection point and $\la|_L$ is an exact $1$-form;
\item
the local system $E$ is of the form $\cE\ot_\k\La$ where $\k$ is the ground field of $\La,$ and $\cE$ some $\k$ local system $\cE$ on $L;$ and
\item
$(L,E)$ is given the trivial bounding cochain, which exists because
$L$ is embedded and does not bound any non-constant pseudo-holomorphic discs.
\iz
For each $(L,E)\in\cF_\ex(X)$ choose and fix a primitive function $h:L\to\R$ with $dh=\la|_L.$ For $(L_0,E_0),(L_1,E_1)\in\cF_\ex(X)$ define the {\it action} function 
\e \cA:L_0\t_XL_1\to\R \e
by $\cA(x):=h_0(x)-h_1(x)$ where $h_0,h_1$ are the primitives of $(L_0,E_0),(L_1,E_1)$ respectively.
\end{dfn}
We recall the following fact. 
\begin{prop}
Let $(L_0,E_0),\cdots,(L_k,E_k)\in\cF_\ex(X)$ and $h_0,\cdots,h_k$ their primitives respectively.
As in the proof of Theorem \ref{thm: Fuk cat} take on $X$ a compatible almost-complex structure $J,$ define the moduli space $\cM(L_1,\cdots,L_k;L_0)$ and define the evaluation map
\e\l{ev}\cM(L_1,\cdots,L_k;L_0)\to \Biggl[\prod_{i=0}^{k-1}(L_i\t_XL_{i+1})\Biggr]\times (L_0\t_XL_k).\e 
Then the fibre of this map over a point $(x_1,\cdots,x_k;x_0)$ consists of $J$-holomorphic discs of area
\e\l{Action} \left(\sum_{i=1}^k\cA(x_i)\right)-\cA(x_0).\e
\end{prop}
\begin{proof}
In the cyclic notation, modulo $k+1$ for indices, the area is equal to
\e\begin{split}
\sum_{i=1}^{k+1} \int_{x_{i-1}}^{x_i}\la&=\sum_{i=1}^{k+1} h_{i-1}(x_i)-h_{i-1}(x_{i-1}) 
=\sum_{i=1}^{k+1} h_{i-1}(x_i)-\sum_{i=0}^k h_i(x_i)\\
&=\left(\sum_{i=1}^k h_{i-1}(x_i)-h_i(x_i)\right)+h_k(x_0)-h_0(x_0)
\end{split}
\e
which is equal to \eq{Action}.
\end{proof}
This implies that the $A_\iy$ structure of $\cF_\ex(X)$ is reducible to $\k$:
\begin{cor}\l{cor: ex}
$\cF_\ex (X)$ has the structure of a $\k$-linear $A_\iy$ category
whose hom space between two objects $(L_0,\cE_0),(L_1,\cE_1)$ is given by
\e
CF_\ex^*(\cE_0,\cE_1):=H^*(L_0\t_XL_1,\hom_\k(\cE_0,\cE_1)[-\mu_{L_0L_1}])
\e
and whose structure maps $\fm_\ex^k$ are defined by the same disc counts but without the $T^\ga$ factor.
The $\La$-linear extension of this $A_\iy$ category is isomorphic to that of the subcategory $\cF(X)$ with the same objects, under the maps
\e \l{eq:exact_to_non-exact}
T^\cA:CF_\ex^*(\cE_0,\cE_1)\ot_\k\La\to CF^*(E_0,E_1),\;\;\;E_i=\cE_i\otimes_\k\La,
\e
defined as the multiplication by $T^{\cA(x)}$ on the component of each $x\in L_0\t_XL_1.$
\qed
\end{cor}
\begin{proof}
This follows from the proof of Theorem \ref{thm: Fuk cat};
the point is that once we have fixed the data $(x_1,\cdots,x_k;x_0)$ relevant to the $A_\iy$ operation in Equation \eq{m^k_ga},
the $\ga$ in the structure map $\fm^k_\ga$ will be equal to \eq{Action}.
\end{proof}

Choose now a countable set of Hamiltonians as in Definition \ref{localisation}.
Denote by $\cW_\ex(X)$ the localisation of $\cF_\ex(X)$ with respect to this.
Using \eq{eq:exact_to_non-exact} we regard $\cW_\ex(X)$ as a full subcategory of $\cW(X).$

Seidel \cite{Seid3} makes another definition of Fukaya categories of exact Lagrangians. This is different in two respects from that definition made in \S\ref{sec: Fuk2}. One is that Seidel defines the hom space $CF^*(L,L)$ of a single exact Lagrangian by introducing a Hamiltonian perturbation $\ph L$ which intersects $L$ transversely, and does Lagrangian Floer theory for the pair $(\ph L, L).$ The other is that Seidel perturbs the moduli spaces of pseudo-holomorphic curves by direct perturbations without using the Kuranishi space machinery. 

We compare now the two definitions. Denote by $\cF_{\rm dir}(X)$ the Fukaya category Seidel defines with the same object set as $\cF_\ex(X).$ This is already a strict $A_\iy$ category because the underlying Lagrangians are exact. Denote by $\cW_{\rm dir}(X)$ its localization with respect to the same Hamiltonians as we use for $\cF_\ex(X)\sb \cF(X).$ We prove then the following proposition, in which we consider only the cohomology categories rather than the original $A_\iy$ categories; this will do for our later treatment.
\begin{prop}\label{prop: ex}
$H\cW_\ex(X)$ is equivalent to $H\cW_{\rm dir}(X).$
\end{prop}
\begin{proof}
We define a functor $H\cF_\ex(X)\to H\cF_{\rm dir}(X).$ We leave out the local systems to save notation.
For two exact embedded Lagrangians $L_a,L_b$ we define a $\k$-linear map
$HF^*_\ex(L_a,L_b)\to HF^*_{\rm dir}(L_a,L_b).$
Suppose either $L_a$ or $L_b$ has no boundary.
Then $HF^*_{\rm dir} (L_a,L_b)$ is defined as $HF_\ex^*(\ph_aL_a,\ph_bL_b)$ for some Hamiltonian diffeomorphisms with compact support in $X\-\d X.$
More precisely, the Floer group $HF_\ex^*(\ph_aL_a,\ph_bL_b)$ is defined by the direct perturbations. But these may be regarded as a different choice of virtual chains, so the resulting Floer groups will be canonically isomorphic. This is done by Fukaya, Oh, Ohta and Ono \cite[Theorem G]{FOOO} (in the context of the present paper the similar arguments have appeared in the proof of Lemma \ref{lem: continuation}).
There is thus a canonical isomorphism $HF^*_{\rm dir} (L_a,L_b)\cong HF_\ex^*(\ph_aL_a,\ph_bL_b).$
There is another isomorphism $HF_\ex^*(\ph_aL_a,\ph_bL_b)\cong HF_\ex^*(L_a,L_b)$ from which we get an isomorphism $HF^*_{\rm dir} (L_a,L_b)\cong HF_\ex^*(L_a,L_b)$ that we want.

Suppose $L_a,L_b$ have both Legendrian collar. Then the morphisms of $(L_a,L_b)$ form the wrapped Floer groups $\varinjlim HF_\ex^*(\ph L_a, L_b)$ and $\varinjlim HF_{\rm dir}^*(\ph L_a, L_b).$
Since the isomorphism
$HF_\ex^*(\ph L_a, L_b)\to HF^*_{\rm dir}(\ph L_a, L_b)$ exists for every $\ph$ we get an isomorphism $HW_\ex^*(L_a, L_b)\to HW^*_{\rm dir}(L_a, L_b).$

We prove that the $\k$-linear maps thus defined are compatible with the products. 
Let $a,b,c\in\N$ and suppose first that at least two of $L_a,L_b,L_c$ have no boundary. 
Write $HF^*_{\rm dir}(L_a,L_b)=HF_\ex^*(\ph_aL_a,\ph_bL_b)$ and $HF^*_{\rm dir}(L_b,L_c)=HF_\ex^*(\ph_bL_b,\ph_cL_c).$
Put $K:=\ph_b^{-1}\ph_aL_a\sqcup \ph_bL_b.$
Recall that the isomorphism $HF_\ex^*(K,L_c)\cong HF_\ex^*(\ph_b K,\ph_cL_c)$ is a bimodule isomorphism with respect to $(\ph_b)_*:HF_\ex^*(K,K)\to HF_\ex^*(\ph_bK,\ph_bK)$ and $(\ph_c)_*:HF_\ex^*(L_c,L_c)\to HF_\ex^*(\ph_cL_c,\ph_cL_c).$ It is in particular a left module isomorphism with respect to $(\ph_b)_*$ so the diagram
\[
\begin{tikzcd}
HF_\ex^*(K,K)\times HF_\ex^*(K,L_c)\ar[r]\ar[d]& HF_\ex^*(K,L_c)\ar[d]\\
HF_\ex^*(\ph_bK,\ph_bK)\times HF_\ex^*(\ph_bK,\ph_cL_c)\ar[r]& HF_\ex^*(\ph_bK,\ph_cL_c)
\end{tikzcd}
\]
commutes. The upper arrow contains the product map $HF_\ex^*(L_a,L_b)\times HF_\ex^*(L_b,L_c)\to HF_\ex^*(L_b,L_c)$ and the lower arrow contains the product map $HF_\ex^*(\ph_aL_a,\ph_bL_b)\times HF_\ex^*(\ph_bL_b,\ph_cL_c)\to HF_\ex^*(\ph_bL_b,\ph_cL_c).$ These are thus compatible.

Suppose now that at least two of $L_a,L_b,L_c$ have Legendrian collar. Then $HW^*_{\rm dir}(L_a,L_b)=\varinjlim HF_\ex^*(\ph^nL_a,L_b)$ and $HW^*_{\rm dir}(L_b,L_c)=\varinjlim HF_\ex^*(L_b,\ph^{-n}L_c).$
The isomorphisms
$HF_\ex^*(\ph^n L_a, L_b)\to HF^*_{\rm dir}(\ph^n L_a, L_b)$
and
$HF_\ex^*(L_b, \ph^{-n}L_c)\to HF^*_{\rm dir}(L_b, \ph^{-n}L_c)$
are compatible with the product maps, completing the proof.
\end{proof}

\subsection{Results in Cotangent Bundles}\l{sec: cotangent}
Let $Q$ be a closed connected manifold, and $X\subset T^*Q$ a compact disc sub-bundle. Identify again the non-compact fibre $T^*_qQ$ over a point $q\in Q$ with the compact fibre in $X$ over the same point. Recall that the wrapped Floer group $HW_\ex^*(T_q^*Q,T_q^*Q)$ may be computed by using a result of Abbondandolo, Portaluri and Schwarz \cite[Theorem 7.1]{APS}: there is a $\Z$-graded $\k$-algebra isomorphism
\e\l{HW based loop, intro} HW_\ex^*(T_q^*Q,T_q^*Q)\cong H_{-*}(\Om_qQ,\k)\e
where $\Om_qQ$ denotes the based loop space and $H_{-*}(\Om_qQ)$ is non-positively graded. We shall not need an $A_\iy$ lift of \eq{HW based loop, intro} as treated by the first author \cite{Ab3}.

To define the Fukaya categories $\cF(X),$ $\cW(X)$ we do the following:
\iz
\item
We take the background class on $T^*Q$ to be the pull-back by the projection $T^*Q\to Q$ of the Stiefel--Whitney class $w_2(Q)\in H^2(Q,\Z/2\Z).$
So the zero-section $Q\subset T^*Q$ has a relative spin structure, and we give it a natural one which we shall define shortly below.
\item
Take the $\Z$-grading of $T^*Q$ to be that by the complexification of a squared real volume form on $Q.$
So there exist $\Z$-gradings of $Q.$
We include in $\cF(X)$ the object supported on $Q$ with the trivial rank-one local system and the trivial bounding cochain.
\item
We give $T^*Q$ the standard Liouville $1$-form $\la$ so that $Q$ is an exact Lagrangian.
We fix a point $q\in Q$ and include in $\cW(T^*Q)$ the object supported on the fibre $T_q^*Q$ with the trivial relative spin structure, an arbitrary $\Z$-grading, the trivial rank-one local system and the trivial bounding cochain.
\iz
Here is the natural relative spin structure on $Q.$ Note that the $\fm^2$ operator $HW^0(T_q^*Q,T_q^*Q)\times HF^*(T_q^*Q,Q)\to HF^*(Q,T_q^*Q)$ and the isomorphism $HW^0(T_q^*Q,T_q^*Q)\cong\La[\pi_1(Q,q)]$ defines a representation $\pi_1(Q,q)\to \GL HF^*(Q,T_q^*Q).$ We often put $\pi_1Q=\pi_1(Q,q)$ for short.
\begin{lem}\l{lem: torsor}
The Lagrangian $Q\subset T^*Q$ has a relative spin structure such that the $\pi_1Q$ representation on $HF^*(T_q^*Q,Q)$ is trivial. If $\k$ has characteristic $2$ this representation is automatically trivial.
\end{lem}
\begin{proof}
If $\k$ has characteristic $2$ then $Q$ defines an object of the exact Fukaya category over $\Z/2\Z$
and the $\pi_1Q$ representation on $HF^*(T_q^*Q,Q)$ makes sense over $\Z/2\Z.$
But this is automatically trivial because $\Z/2\Z$ has only one unit.

Suppose now that $\k$ has characteristic $\ne2$ and choose at first any  relative spin structure on  $Q$. 
Then $Q$ defines an object of the exact Fukaya category over $\Z$ (it is only here in the present paper that we use Fukaya categories over a ring).
The object $Q$ defines a representation $\pi_1Q\to\GL_1\Z \cong \Z/2 \Z$ and such representations are classified by elements of $H^1(Q,\Z/2\Z).$  
Since the set of relative spin structures is a torsor over the same group $H^1(Q,\Z/2\Z),$ we can then choose a possibly different relative spin structure which makes this representation trivial.
\end{proof}

We compute the representations corresponding to $C^1$ perturbations of the zero-section $Q\subset T^*Q.$
\begin{lem}\l{lem: smooth deformations}
Let $\al$ be a closed $1$-form on $Q$ whose graph $Q^\al\subset T^*Q$ lies in the sub-bundle $X\subset T^*Q.$
Let $E$ be a filtered local system over $Q^\al$ and regard this as an object of $\cF(X),$ which makes sense because $H_2(T^*Q,Q^\al;\Z)=0$ and no non-constant pseudo-holomorphic curve exists for $(T^*Q,Q^\al).$
Denote by $T^{-[\al]}:\pi_1Q\to\La^*$ the one-dimensional representation $x\mapsto T^{-[\al]\cdot x}.$
Regard $E$ as the representation of $\pi_1Q^\al\cong\pi_1Q.$
Then $T^{-[\al]}\ot E$ is isomorphic to the $\pi_1Q$ representation on $HW^*(T_q^*Q,E)$ corresponding to the $\fm^2$ operator $HW^0(T_q^*Q,T_q^*Q)\times HF^*(T_q^*Q,E)\to HF^*(T_q^*Q,E)$ under the isomorphism $HW^0(T_q^*Q,T_q^*Q)\cong\La[\pi_1Q].$
\end{lem}
\begin{proof}
Let $x\in \pi_1(Q,q)$ be any element. 
Then there exists a corresponding element of $HW^*_\ex(T_q^*Q,T_q^*Q)$ under the isomorphism $HW^*_\ex(T_q^*Q,T_q^*Q)\cong \k[\pi_1Q].$
Choose a Hamiltonian diffeomorphism $\ph:T^*Q\to T^*Q$ such that $\ph(T_q^*Q)$ intersects $T_q^*Q$ transversely at $q\in Q$ in the zero-section and there exists an element of $CF^*_\ex(\ph(T_q^*Q),T_q^*Q)$ corresponding to $x.$

Denote by $\ta:T^*Q\to T^*Q$ the fibrewise translation by $\al.$ Put $\la':=\ta_*\la=\la+\pi^*\al$ where $\pi$ is the projection $T^*Q\to Q.$ This induces a category isomorphism $H\cF_\ex(X,\la)\to H\cF_\ex(X,\la')$ and accordingly a commutative diagram
\[
\begin{tikzcd}
HF_\ex^*(\ph(T_q^*Q),T_q^*Q)\times HF_\ex^*(T_q^*Q,\ta^*E)\ar[r]\ar[d]& HF_\ex^*(\ph(T_q^*Q),\ta^*E)\ar[d]\\
HF_\ex'(\ph(T_q^*Q),T_q^*Q)\times HF_\ex'(T_q^*Q,E)\ar[r]& HF_\ex'(\ph(T_q^*Q),E)
\end{tikzcd}
\]
where we have written $HF'$ for the morphism spaces of $H\cF_\ex(X,\la').$
Write $CF_\ex'$ for the morphism spaces of $\cF_\ex(X,\la').$
Since $CF_\ex'(T_q^*Q,Q^\al)$ has no differential it follows then that the $\fm^2$ operator
\[CF_\ex'(\ph(T_q^*Q),T_q^*Q)\times CF_\ex'(T_q^*Q,E)\to CF_\ex'(\ph(T_q^*Q),E)\]
agrees with the parallel transport map of $E|_q.$
The areas of the $J'$-holomorphic discs are equal to $\int_q^x\la'+\int_x^q\la'+\int_q^q\la'$ which we explain next.
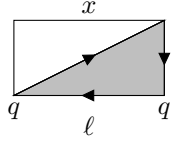
\begin{figure}
  \centering
  \begin{tikzpicture}
         \draw [fill=lightgray] (0,0)--(2,0)--(2,1)--cycle;
       
         \draw (2,0)--(0,0) node[
    currarrow,
    pos=0.5, 
    xscale=-1,
    sloped,
    scale=1] {};
         \draw (0,0)--(2,1) node[
    currarrow,
    pos=0.5, 
    xscale=1,
    sloped,
    scale=1] {};
    \draw (2,1)--(2,0) node[
    currarrow,
    pos=0.5, 
    xscale=1,
    sloped,
    scale=1] {};
         \draw (0,0)--(2,1)--(0,1)--cycle;
         \node  at (0,-0.2) {$q$};
          \node  at (2,1.2) {$x$};
\node at (2,-0.2) {$q$};
\node at (1,-0.4) {$\ell$};
\node at (1,1.2) {$\bar x$};
         \end{tikzpicture}
         \caption{a pseudo-holomorphic disc for $(q,\hat x,q)$}\l{fig: tri}
\end{figure}
The first path from $q$ to $x$ is the diagonal from the bottom left to the top right in Figure \ref{fig: tri}, which lies on $\ph(T_q^*Q);$ and the integral over this is equal to $h_{\ph(T_q^*Q)}(x)-h_{\ph(T_q^*Q)}(q)=\cA(x).$ The second path from $x$ to $q$ is the vertical line in Figure \ref{fig: tri}, which lies on $T_q^*Q;$ and the integral over this vanishes. The third path from $q$ to $q$ is the horizontal line in Figure \ref{fig: tri}, which lies on $Q;$ and the integral over this is equal to $-[\al]\cdot x.$
Thus the map
\[CF^*(\ph(T_q^*Q),T_q^*Q)\times CF^*(T_q^*Q,E)\to CF^*(\ph(T_q^*Q),E)\]
maps $(x,q')$ to $T^{\cA(x)-\al\cdot x}\rho(x)q'.$
So $T^{\cA(x)}x$ acts by $T^{-\al\cdot x}$ upon $CF^*(T_q^*Q,E).$
As the functor $\ta_*:H\cF(X,\la)\to H\cF(X,\la')$ is an isomorphism, $T^{\cA(x)}x$ has the same action upon $CF^*(T_q^*Q,\ta^*E),$ which completes the proof.
\end{proof}

We need some more preparation for the proof of Theorem \ref{thm: main} (ii)--(iv).  We will give a general enough treatment which applies to all the three cases. There would in fact be a simpler way if we wanted to prove only (ii); we could then focus on a single loop space $\Om_qQ$ whereas we shall in what follows be concerned with many path spaces $\Om_{q_0q_1}.$ The latter will be needed, however, at the end of \S\ref{sec: Yoneda}; and the results in it will be used for the proof of (iii),(iv).

Fix an arbitrary triangulation $\De$ of $Q$ which contains the base-point $q\in Q$ and such that the fibre $T^*_{q_0}Q$ over each vertex $q_0$ of $(Q,\De)$ is an underlying exact Lagrangian in the category $\cW(X).$ Denote by $\Om_{q_0q_1}$ the space of paths from one vertex $q_0$ to another vertex $q_1$ and recall that there is a $\La$-vector space isomorphism $H_0(\Om_{q_0q_1},\La)\cong HW^0(T_{q_0}^*Q,T_{q_1}^*Q).$ Given three vertices $q_0,q_1,q_2\in Q$ there is a map $\Om_{q_0q_1}\times\Om_{q_1q_2}\to \Om_{q_0q_2}$ defined by concatenating paths. Using the isomorphisms $H_0(\Om_{q_aq_b},\La)\cong HW^0(T_{q_a}^*Q,T_{q_b}^*Q)$ for $a,b=0,1,2$ we get then a $\La$-bilinear map
\e\l{concat}
HW^0(T_{q_0}^*Q,T_{q_1}^*Q)\times HW^0(T_{q_1}^*Q,T_{q_2}^*Q)\to HW^0(T_{q_0}^*Q,T_{q_2}^*Q)
\e
which in fact agrees with the $\fm^2$ product map in $\cW(X).$

Let $\b$ be an object of $\cF(X)$ and put $V:=HF^*(T_q^*Q,\b),$ a finite-dimensional $\La$-vector space. Denote by $\rho:\pi_1(Q,q)\to \GL V$ the $\La$-linear representation defined by the $\fm^2$ operator $HW^0(T_q^*Q,T_q^*Q)\times V\to V$ and the isomorphism $HW^0(T_q^*Q,T_q^*Q)\cong\La[\pi_1Q].$ Let $\rho:\pi_1(Q,q)\to \GL V$ have a sub-representation $\si:\pi_1(Q,q)\to \GL W$ on some {\it one-dimensional} subspace $W\sb V.$ 

We define then an {\it approximate} local system $(W_{q_0})_{q_0\in\text{ is a vertex of }(Q,\De)}$ of one-dimensional valued $\La$-vector spaces. More precisely, this is a contravariant functor from the full subcategory $\Pi_1(Q,\De)$ of the fundamental groupoid $\Pi_1Q$ whose objects are the vertices of $(Q,\De).$ We define the target category $FV^1$ by saying that its objects are one-dimensional valued $\La$-vector spaces and that its morphisms are plain vector space homomorphisms (which need not be filtered).

For each vertex $q_0\in Q$ the one-dimensional valued $\La$-vector space $W_{q_0}$ is defined as follows. Let $x\sb\Om_{q_0q}$ be any connected component and define $W_{q_0}\sb HF^*(T_{q_0}^*Q,\b)$ to be the image of $\{x\}\times W$ under the $\fm^2$ operator $HW^0(T_{q_0}^*Q,T_q^*Q)\times HF^*(T_q^*Q,\b)\to HF^*(T_{q_0}^*Q,\b).$ We also write $W_{q_0}=:xW$ for short, which is in fact independent of the choice of $x.$ For if $y\sb\Om_{q_0q}$ is another component then as $\fm^2$ is associative we can write $yW=xx^{-1}yW$ using a homotopy inverse $x^{-1}.$ Noting that $x^{-1}y\in \pi_1(Q,q)$ and recalling that $W$ is a representation of $\pi_1(Q,q)$ we find $x^{-1}y W=W.$ So $yW=xx^{-1}yW=xW.$ We have also $x^{-1}xW=W,$ which implies that the multiplication by $x$ is bijective and that $xW$ is one-dimensional over $\La.$ We give $xW\sb HF^*(T_q^*Q,\b)$ the filtration induced from that of $HF^*(T_q^*Q,\b).$ Thus $W_{q_0}=xW$ is a one-dimensional valued $\La$-vector space.

Note that for any second vertex $q_1\in Q$ and for any morphism from $q_0$ to $q_1$ the $\fm^2$ product map $H_0(\Om_{q_0q_1},\La)\times W_{q_1}\to W_{q_0}$ induces a $\La$-vector space homomorphism $W_{q_1}\to W_{q_0}.$ This defines the desired functor $\si:\Pi_1(Q,\De)\to FV^1$ because the $\fm^2$ operator \eq{concat} is given by concatenation of paths.

Note that the valuation of linear maps between one-dimensional $\La$-vector spaces may be defined in a natural way such that there is a functor $\v:FV^1\to\R,$ where the latter is regarded as a groupoid of one object whose hom space is the additive group $\R$ composed by the ordinary addition $\R\times\R\to\R.$  Composing the two functors $\si:\Pi_1(Q,\De)\to FV^1$ and $\v:FV^1\to\R$ we get a functor $\v\si:\Pi_1(Q,\De)\to\R.$ 

We introduce another piece of notation: if $e$ is a directed edge in $(Q,\De)$ from a vertex $q_0$ to another vertex $q_0$ then regard $e$ as a path $[0,1]\to Q$ and denote by $[e]\in H_0(\Om_{q_0q_1},\La)$ its homotopy class relative to the end points. We prove now
\begin{lem}\l{lem: est for rep}
In the circumstances above there exists a constant $C>0,$ independent of $\b\in\obj\cF(X),$ such that for every directed edge $e$ of $(Q,\De)$ we have $|\v\si[e]|\le C.$
\end{lem}
\begin{proof}
Recall that the wrapped category $\cW(X)$ is defined by an $A_\iy$ localization after choosing a countable set $\cH$ of Hamiltonians $X\to\R$ which are linear near the boundary of the disc sub-bundle $X\subset T^*Q.$ Suppose first that for any two vertices $q_0,q_1$ of $(Q,\De)$ no Reeb chords from $T_{q_0}^*Q$ to $T_{q_1}^*Q$ have integer length. Then for any $H\in\cH$ its Hamiltonian chords stay in the region where the Hamiltonian is not linear, and accordingly, away from the boundary of $X\subset T^*Q.$ Using this fact we can find a constant $C>0$ such that for any Hamiltonian $H\in\cH,$ for any two vertices $q_0,q_1\in Q$ and for any Hamiltonian chord $\ga:[0,1]\to X\subset T^*Q$ with $\ga(0)\in T_{q_0}^*Q$ and $\ga(1)\in T_{q_1}^*Q$ we have $\sup_{t\in[0,1]}|H\cm \ga(t)| \le \frac C2$

For each directed edge $e$ in $(Q,\De)$ between vertices $q_0,q_1$ fix a finite sum $\sum_ic_i\ga_i\in HW_\ex^0(T_{q_0}^*Q,T_{q_1}^*Q),$ where $c_i\in\k$ and $\ga_i$'s are Hamiltonian chords from $T_{q_0}^*Q$ to $T_{q_1}^*Q,$ that maps to the class $[e]\in H_0(\Om_{q_0q_1},\La)$ under the isomorphism $HW_\ex^0(T_{q_0}^*Q,T_{q_1}^*Q)\cong H_0(\Om_{q_0q_1},\La).$ 
Let $C>0$ be so large that for every such $e$ and $\ga_i$ we have always $\left|\int_0^1 \ga_i^*\la\right|\le \frac C2,$ which is possible because $(Q,\De)$ is a {\it finite} complex.

Recall now that $\si [e]=(\rho [e])|_{W_{q_0}}:=\fm^2(\sum c_iT^{\cA(\ga_i)}\ga_i,*)|_{W_{q_0}}$ and that $\v\fm^2$ is a filtered homomorphism.
Then $\v\si [e]\ge \min\{\cA(\ga_i)+\v\fm^2(\ga_i,*)\}\ge \min\{\cA(\ga_i)\}$ and 
\e\l{v si} -\v \si[e]=\max\{-\cA(\ga_i)\}\le \left|\int_0^1 \ga_i^*\la\right|+\left|\int_0^1H\cm \ga_i(t) dt\right|\le \frac C2+\frac C2=C.\e
Replacing $e$ by its reverse edge $e^{-1}$ we get also $\v\si[e]=-\v\si[e^{-1}]\le C.$

The integer length condition on Reeb vector fields is a generic condition which may be satisfied by re-scaling the symplectic form $\om$ and the Liouville form $\la$ as Abouzaid--Seidel \cite[last pargraph of \S3.1]{AS} do. That is,  consider the fibrewise re-scale of $T^*Q$ by $\ka>0.$ The pull-back by this of the canonical $1$-form $\la$ on $T^*Q$ is $\ka\la.$ Re-scale also by $\ka$ the Hamiltonians used above. Replacing $C$ above with $C\ka^{-1}$ we get then the same conclusion  $\v\si[e]\le C.$
\end{proof}
Given a functor $\ta:\Pi_1(Q,\De)\to \R$ we can define a $1$-cochain $\ta'\in C^1(\De,\R)$ by $\ta' e=\ta[e]$ for every directed edge $e$ of $(Q,\De).$ We show that $\ta'$ is a cocycle. Let $e_1,\dots,e_\nu$ be directed edges of $(Q,\De)$ such that the starting point of $e_a,$ $a=1,\dots,\nu,$ agrees with the endpoint of $e_{a-1},$ where $e_0:=e_\nu.$ Suppose that the concatenation $x=e_1*\dots* e_\nu$ is a $1$-boundary of $(Q,\De).$ Then $\ta' x$ may be computed under the group homomorphism $\ta:\pi_1(Q,q_0)\to\R$ where $q_0$ is the starting point of $e_0.$  But this may be pushed down to the homology group $H_1(Q,\Z),$ on which the $1$-boundary vanishes; and accordingly, so does $\ta'x.$ Define a norm $|\,\,|$ on $C^1(\De,\R)$ by $|\ta'|:=\max\{|\ta' e|: e\text{ is a directed edge of }(Q,\De)\}.$ Define also $|\ta|$ by $|\ta|:=|\ta'|.$
Lemma \ref{lem: est for rep} may then be re-stated as follows: 
\begin{cor}\l{cor: est for norm}
Let $C$ be as in Lemma $\ref{lem: est for rep}.$ Then $|\v\si|\le C.$ \qed
\end{cor}

We re-state the effect of fibrewise re-scales of $T^*Q.$
\begin{cor}\l{cor: est for rep}
In the circumstances of Lemma $\ref{lem: est for rep}$ suppose that the underlying Lagrangian of $\b\in\obj\cF(X)$ lies within distance $\de>0$ from the zero-section $Q\subset T^*Q.$
Then there exists $C>0$ independent of $\b$ and such that $|\v\si|\le   C\de.$ \qed
\end{cor}

Lemma \ref{lem: est for rep} and Corollary \ref{cor: est for rep} are only a few pieces of action-complete Floer theory \cite{Seid2012,Varolgunes,Venkatesh} and local Fukaya categories \cite{AGV}. In particular, the estimates above (which will do for our purpose) would be far from optimal; for instance, given a homotopy class $x\in \Om_{q_0q_1}$ we could try to bound $|\v\si(x)|$ by means of the least length or energy of $x$ after giving $Q$ a Riemannian metric.

\subsection{The Split-Generation Theorem}\label{sec:gener-fukaya-categ}
We recall the definitions we shall need about split-generations of $A_\iy$ categories. 
\begin{dfn}
Let $\cA$ be a strict $A_\iy$ category with units and $\cF,\cG\sb\cA$ full subcategories.
We say that $\cF$ is {\it split-generated} by $\cG$ if the image of $\cF$ under $\cA\to \Pi\Tw\cA$ lies in the full subcategory split-generated by $\cG.$ Here $\Pi\Tw\cA$ is defined for instance by Seidel \cite[Lemma 4.7]{Seid3}. The full subcategory split-generated by $\cG$ is defined by Seidel \cite[after Lemma 4.8]{Seid3}.
\end{dfn}

We state and prove now the split-generation theorem including non-exact Lagrangians.
\begin{thm}\l{thm: gen}
Let $Q$ be a closed connected manifold, and $X\subset T^*Q$ a compact disc sub-bundle. The Fukaya category $\cF(X)\subset \cW(X)$ is then split-generated by the full subcategory of $\cW(X)$ consisting of the single object $T_q^*Q.$
\end{thm}

We begin by explaining the relevant properties of Viterbo restriction functors. We work in a slightly more general context than above. Let $E$ be a Liouville domain and $X\subset E$ a Liouville subdomain.  We assume that we have fixed a background class in $H^2(E,\Z/2\Z)$ for the construction of Fukaya categories on $E$, and use the restriction of this class to $X$ when discussing Fukaya categories on $X.$ We assume that every underlying Lagrangian $L$ in $\cW_\ex(E)$ that is not contained in $X\-\partial X$ intersects $\partial X$ transversely and is equipped with a primitive function for the Liouville $1$-form, which is constant near $L\cap \partial X$. Note that the embedding $X\subset E$ induces an $A_\iy$ functor
\begin{equation}\l{XE}
    \cF(X)\to\cF(E)
\end{equation}
because of the maximum principle for pseudo-holomorphic discs in $E$ bounded by closed Lagrangians in $X.$ In this setting, we prove the following result in Appendix \ref{sec:proof-proposition}
\begin{prop}\l{prop: restriction compatible}
There is a Viterbo restriction functor $\cW(E) \to \cW(X)$, extending the functor on exact Fukaya categories defined in \cite{AS}.  
\end{prop}

Given the functoriality of restriction maps, we conclude:
  \begin{cor} \label{cor:generation_preserved_by_restriction}
    If $\b$ is an object of $\cF(X)$ whose image in $\cW(E)$ lies in the category split-generated by a collection of exact Lagrangians, then $\b$ lies in the subcategory of $\cW(X)$ split-generated by their images under the restriction functor. \qed
  \end{cor}

We return now to the case where $X$ is a compact disc sub-bundle in $T^*Q.$ In \cite{FSS1} Fukaya, Seidel and Smith construct a Lefschetz fibration with total space a Liouville subdomain of $T^*Q$ (which they call $T^*N$), whose critical points are of either disjoint from the zero-section $Q$ or correspond to the $k$ distinct critical points $q_1,\dots,q_k$ of some Morse function on $Q$ (see the discussion preceding  \cite[Lemma 7]{FSS1}). Taking the inverse image of a small region in the base of the Lefschetz fibration produces a Liouville subdomain $D\subset T^*Q$ containing the zero-section $Q$ and which has the property that all its Lefschetz thimbles intersect $Q$ transversely at $q_1,\dots,q_k$ respectively. We change by some exact $1$-form on $D$ the Liouville $1$-form on it so that there exist on the Lefschetz thimbles some primitives of the new Liouville form which are constant near $q_1,\dots,q_k$ respectively. This is possible because the old Liouville $1$-form is exact on some neighbourhoods of $q_1,\dots,q_k\in D.$ 

We use $D$ with the new Liouville $1$-form and its primitives on the Lefschetz thimbles. Applying to them the doubling construction by Seidel \cite[\S18]{Seid3} we get
\iz
\item[\bf(i)]  a Liouville manifold $E$ (which Seidel denotes by $\tilde{E}$) containing $X$ as a compact Liouville subdomain, and $Q\subset X$ as a closed exact embedded Lagrangian;
\item[\bf(ii)] a collection $S_1,\cdots,S_k \subset E$ of embedded exact Lagrangian spheres (which Seidel denotes by $\tilde{\Delta}_1,\dots,\tilde{\Delta}_k$ respectively) which are given primitives that are constant near $q_1,\dots,q_k\in Q$ respectively, and whose intersections with $X$ are the respective Lefschetz thimbles associated to $q_1,\dots,q_k;$ and 
\item[\bf(iii)]
a symplectomorphism $\ph:E\to E$ such that $\phi X$ is disjoint from $X$ and which is Hamiltonian isotopic to a composition of Dehn twists around $S_1,\dots,S_k;$ see the first paragraph in the proof of \cite[Lemma 18.15]{Seid3}. 
\iz
Since we are interested in studying Fukaya categories that are possibly twisted by background classes, we need to know that any background class on $X$ extends to a background class on $E$:
\begin{lem} \label{lem:double_cover_surject_cohomology}
The restriction map $H^*(E) \to H^*(X)$ admits a splitting which vanishes on every sphere.
\end{lem}
\begin{proof}
Seidel's construction exhibits $E$ as a branched double cover of a Liouville domain containing $X$ as a deformation retract. A splitting is then provided by the double covering map.
\end{proof}
The data (i)--(iii) above and Lemma \ref{lem:double_cover_surject_cohomology} imply that the following holds.
\begin{lem}
  If $X \subset E$ is a sufficiently small Weinstein neighbourhood of $Q$, then
\iz
\item the intersection of each Lagrangian $S_i$ with $X$ is Hamiltonian isotopic to a cotangent fibre, and
\item the image of every closed Lagrangian in $X$ under $\phi$ is disjoint from $X$.\qed
\iz  
\end{lem}
       
Fix a background class on $X$ and the class induced in $E$ by the splitting above. Introduce the Fukaya category $\cF(E)$ which contains the Lagrangians $S_i$ 
with the trivial relative spin structures (which exists because every $S_i$ is the double of a disc in $X$), the trivial rank-one local systems and the trivial bounding cochains.
\begin{rem}
Here we allow the ground field $\k$ to have characteristic $2,$ because we do {\it not} take the $\Z/2\Z$ invariant Fukaya category as Seidel does in the discussion of equivariant Fukaya categories in \cite{Seid3}, which is used in  \cite{FSS1}.
\end{rem}
The key result about this Fukaya category is:
\begin{lem} \label{lem:split_generation_double_cover}
  Every object of $\cF(E)$ supported in $X$ lies in the category split-generated by the spheres $S_i$.
\end{lem}
\begin{proof}
  The proof is the same as that for exact Lagrangians by Seidel \cite[Proposition 18.17]{Seid3}, using  Oh's result \cite[Section 9]{Oh} for Seidel sequences in the non-exact case.  Oh proves indeed that given two objects $\b,\b'$ and a Lagrangian sphere $S$, with corresponding Dehn twist $\tau_S$, we have a null homotopy for the composition
  \begin{equation}
    CF^*(\tau_S \b,S) \ot CF^*(S, \b')  \to   CF^*(\tau_S \b, \b')  \to   CF^*(\b,\b')
\end{equation}
where the first map is given by composition and the second by multiplication with a distinguished closed morphism in $  CF^*(\b,\tau_S \b)$. So the cone of this morphism $ \b \to \tau_S \b$  lies in the category generated by $S$.

Using this sequence repeatedly we see that if $\b$ is an object of $\cF(E)$ supported in $X$ then there is some closed morphism $\al$ from $\b$ to $\phi \b$ whose cone lies in the category generated by $S_1,\cdots, S_k.$
But the Lagrangians underlying $\b$ and $\ph\b$ may be made disjoint after a Hamiltonian isotopy,
so $\al$ is exact and the cone of $\al$ is isomorphic to a direct sum of $\b$ and $\ph\b$ (with shifts).
This implies that $\b$ is a summand of the object lying in the category generated by $S_1,\cdots, S_k$.
\end{proof}

We are ready now to prove the split-generation result:
\begin{proof}[Proof of Theorem $\ref{thm: gen}$]
  Lemma \ref{lem:double_cover_surject_cohomology} implies that any background class on $X$ is the restriction of a background class on $E$. If $\dim Q \neq 2$, the Lagrangian spheres $S_i$ are automatically relatively spin. In the remaining case, we note that the only possible background class is represented by the Poincar\'e dual of the zero section, which corresponds to a cotangent fibre; and its pull-back to $E$ is represented by the inverse image of a cotangent fibre, whose intersection with $S_i$ vanishes with $\Z/2\Z$ coefficients because the submanifolds intersect an even number of times. The result then follows by applying Lemma \ref{lem:split_generation_double_cover} and Corollary \ref{cor:generation_preserved_by_restriction}, using as well the fact that all cotangent fibres are equivalent in the wrapped Fukaya category, so that one suffices.
\end{proof}

\subsection{Yoneda Functors}\l{sec: Yoneda}

We recall now some standard facts about $A_\iy$ Yoneda functors. We follow Seidel \cite{Seid3} with minor modifications.
For a strict $A_\iy$ category $\cA$ with units denote by $\Mod\cA$ the category of left $A_\iy$ modules (Seidel \cite[(2f)]{Seid3} works with right modules in place of left modules, and with cohomology units in place of strict units). 
Seidel \cite[(1l) and (2g)]{Seid3} defines the {\it Yoneda} functor $\cA\to\Mod\cA$ which is a strict $A_\iy$ functor, and proves that this induces a fully faithful functor $H\cA\to H(\Mod\cA)$ between the cohomology categories \cite[Corollary 2.13]{Seid3}. 
 
Seidel \cite[(2.13)]{Seid3} proves also that if $\cA,\cB$ are strict $A_\iy$ categories and $F:\cA\to\cB$ a cohomologically fully faithful $A_\iy$ functor then the pull-back functor $F^*:\Mod\cB\to\Mod\cA$ and the Yoneda functors $\cA\to \Mod\cA,$ $\cB\to\Mod\cB$ fit into the diagram
\begin{equation}\l{eq: Yoneda functorial}
\begin{tikzcd}  [column sep=large]
  \cA \ar[d] \ar[r,"F"]& \cB\ar[d]\\
 \Mod \cA &  \Mod\cB\ar[l,swap, "F^*"]
\end{tikzcd}
\end{equation}
which commute after passing to the cohomology categories.
 
\begin{lem}\l{lem: Yoneda}
Let $\cA$ be a strict $A_\iy$ category with units. Let $\cF,\cG\sb\cA$ be full subcategories such that $\cF$ is split-generated by $\cG.$ Consider the composite $\cF\to\Mod\cG$ of the inclusion functor $\cF\sb\cA,$ the Yoneda functor $\cA\to\Mod\cA$ and the pull-back functor $\Mod\cA\to\Mod\cG.$
Then the induced functor $H\cF\to H(\Mod\cG)$ is fully faithful.
\end{lem}
\begin{proof}
We use a result of Seidel \cite[Lemma 4.7]{Seid3} and its proof. Let $\cA\to \Pi\Tw\cA$ be a split-closure of the triangulated envelop $\Tw\cA$ of $\cA$ as Seidel defines \cite[after Lemma 4.8]{Seid3}. Denote by $\cG'\sb \Pi\Tw\cA$ the full subcategory split-generated by $\cG.$
Then the composite functor $\cF\sb\cA\to\Pi\Tw\cA$ has image in $\cG'.$ Using \eq{eq: Yoneda functorial} twice we get now a commutative diagram
\begin{equation}\l{Yoneda and split-generation}
\begin{tikzcd}
H\cF\ar[r]\ar[drr]&H\cG'\ar[r]\ar[r] &H(\Pi\Tw\cA)\ar[r]& H(\Mod\Pi\Tw\cA)\ar[d]\\
 & & H\cA\ar[r]\ar[u]& H(\Mod\cA)\ar[d]\\
 & & H\cG\ar[r]\ar[uul]\ar[u]& H(\Mod\cG).
\end{tikzcd}
\end{equation}
Here $H\cG\to H(\Mod\cG)$ is induced from the Yoneda functor and accordingly fully faithful. 
As \eq{Yoneda and split-generation} commutes the composite $H\cG\to H(\Mod\cG)$ factoring through $H\cG',H(\Pi\Tw\cA),H(\Mod\Pi\Tw\cA)$ and $H(\Mod\cA)$ is fully faithful. 
But $\cG'$  is split-generated by $\cG$ so  $H\cG'\to H(\Mod\cG)$ is fully faithful too. 
The arrow $H\cF\to H\cG'$ is induced from the Yoneda functor $\cA\to\Mod\cA$ and therefore fully faithful too.
The composite $H\cF\to H\cG'\to H(\Mod\cG)$ is thus fully faithful. 
\end{proof}

We return now to the circumstances of Theorem \ref{thm: gen}. Denote by $CW^*(T_q^*Q,T_q^*Q)$ the endomorphism $A_\iy$ algebra of the single object $T_q^*Q\in \obj\cF(X).$ Lemma \ref{lem: Yoneda} implies then
\begin{cor}
The induced functor $H\cF(X)\to H(\Mod CW^*(T_q^*Q,T_q^*Q))$ is fully faithful. \qed
\end{cor}

We show in Proposition \ref{prop: fin cov} below that this functor is compatible in a certain sense with a finite cover $f:P\to Q.$ Denote by the same $f$ the induced map $T^*P\to T^*Q$ between the cotangent bundles, which is also a finite cover. Fix a point $p\in f^{-1}(q).$ Identify again the non-compact fibre $T^*_pP$ with the compact fibre in $f^*X$ over the same point $p.$

We define Fukaya categories of the pre-image $f^*X\subset T^*P.$ We pull back to $f^*X$ the branes on $X;$ or more precisely, the pull-back of an underlying Lagrangian in $X$ is possibly disconnected, in which case we include all the connected components of it; for instance, we include the fibre $T_p^*P.$ We pull back to $f^*X$ the relative spin structure on $X,$ with which we introduce the Fukaya category $\cF_\nc(f^*X).$


We define then an $A_\iy$ algebra homomorphism $f_*:HF^*(T_p^*P,T_p^*P)\to HF^*(T_q^*Q,T_q^*Q)$ which acts as the set-theoretic push-forward upon the underlying Lagrangians. Choose a compatible almost complex structure on $X$ and pull this back to $f^*X.$ Recall that $CF^*(T_p^*P,T_p^*P), CF^*(T_q^*Q,T_q^*Q)$ are made from gapped $A_N$ categories, $N\in\N=\{0,1,2,\dots\}.$ We define gapped $A_N$ algebra homomorphisms between these which respect all the $\fm^k_\ga$ operators, simply by pushing forward the pseudo-holomorphic discs. We define in the same way an inverse system of homotopy equivalences with which we can pass to the limits. Then we get a gapped $A_\iy$ algebra homomorphism between the curved $A_\iy$ complexes $CF^*(T_p^*P,T_p^*P),CF^*(T_q^*Q,T_q^*Q).$ Including the bounding cochains we get a strict $A_\iy$ algebra homomorphism $f_*:CF^*(T_p^*P,T_p^*P)\to CF^*(T_q^*Q,T_q^*Q)$ that we want.

We pass now to the wrapped categories. By definition there exist a localization functor $\cF_\nc(X)\to \cW(X)$ and accordingly a composite functor $\cF_\nc(f^*X)\to \cW(f^*X).$ Restricting this to the single object $T_p^*P$ we get an $A_\iy$ algebra homomorphism $f_*:CF^*(T_p^*P,T_p^*P)\to CW^*(T_q^*Q,T_q^*Q).$ We pull back to $f^*X$ the set $\Ph$ of Hamiltonians used to define $\cW(X),$ with which we define the wrapped category $\cW(f^*X).$ We show that the $A_\iy$ algebra homomorphism $f_*:CF^*(T_p^*P,T_p^*P)\to CW^*(T_q^*Q,T_q^*Q)$ descends to the hom space $CW^*(T_p^*P,T_p^*P)$ in the localized category $\cW(f^*X).$ Let $\ps:T^*P\to T^*P$ be the time-one diffeomorphism of a Hamiltonian we have chosen, which is the pull-back of some Hamiltonian $\ph\in\Ph.$ Denote by the same $\ph$ the time-one diffeomorphism $X\to X.$ Then the push-forward of $\ps(T_p^*P)$ is equal to $\ph(T_q^*Q)$ and the push-forward of the continuation morphism $\ka_\ps\in HF^*(\ps(T_p^*P),T_p^*P)$ is the continuation morphism $\ka_\ph\in HF^*(\ph(T_q^*Q),T_q^*Q).$ The latter is already invertible in $\cW(X)$ and hence we get by the universal property an $A_\iy$ algebra homomorphism  $CW^*(T_p^*P,T_p^*P)\to CW^*(T_q^*Q,T_q^*Q)$ that we want. 

We define now a pull-back functor $f^*:\cF(X)\to\cF(f^*X)$ between the Fukaya categories whose underlying Lagrangians are compact without boundary. Take $(L,E,b)\in\obj \cF(X)$ with $L$ a Lagrangian, $E$ a local system and $b$ a bounding cochain. We define a gapped $A_\iy$ homomorphism 
  \begin{equation} \label{eq:pull-back_map_curved_Aoo2}
      C_*(L,E) \to   C_*(f^* L,f^* E) 
  \end{equation}
that assigns to each singular chain in $L$ the sum of all possible inverse images in $f^* L.$
Using the fact that discs are simply connected, we see that the moduli space of discs in $f^*X$ is a cover of the moduli space of discs in $X,$ and hence that a choice of virtual chains for $X$ determines the corresponding choice for $f^*X$ by passing to the cover. This implies that \eqref{eq:pull-back_map_curved_Aoo2}  is compatible with the gapped $A_N$ algebra structures on the singular chain complexes $C_*(L,E), C_*(f^*L,f^*E).$ The same argument shows that the homotopies required to construct the inverse system are also compatible with the pull-back by $f.$ So we can take the limit with respect to $N$ and \eqref{eq:pull-back_map_curved_Aoo2} is a gapped $A_\iy$ homomorphism.
This maps the bounding cochain $b\in C_*(L,E)$ to a bounding cochain in $C_*(f^* L,f^* E),$ which defines an $A_\iy$ functor $f^*:\cF(X)\to\cF(f^*X)$ that we want.

Note that the push-forward $A_\iy$ algebra homomorphism $f_*: CW^*(T_p^*P,T_p^*P)\to CW^*(T_q^*Q,T_q^*Q)$ induces a pull-back functor $f^*:\Mod CW^*(T_q^*Q,T_q^*Q)  \to \Mod CW^*(T_p^*P,T_p^*P).$ Hence we get a diagram
\begin{equation} \label{eq:pull-back_map_curved_Aoo}
\begin{tikzcd}
  \cF(X) \ar[d] \ar[r,  "f^*"]& \cF(f^*X)\ar[d]\\
   \Mod CW^*(T_q^*Q,T_q^*Q)  \ar[r,  "f^*"] & \Mod CW^*(T_p^*P,T_p^*P) 
\end{tikzcd}
\end{equation}
where the vertical arrows are the Yoneda functors. We prove
\begin{prop}\l{prop: fin cov}
The diagram \eq{eq:pull-back_map_curved_Aoo} commutes in the sense that for every object $\b\in\cF(X)$ the two Yoneda modules $CF^*(T_q^*Q,\b)$ and $CF^*(T_p^*P,f^*\b)$ are isomorphic $A_\iy$ modules over  $CW^*(T_p^*P,T_p^*P).$
\end{prop} 
\begin{proof}
This follows from the fact that the relevant holomorphic curve moduli spaces are bijective under the covering of the moduli spaces used above; the underlying Lagrangians in $f^*X$ include $T_p^*P,$ which specifies the lift of each disc in $X.$ 
\end{proof}

Finally we extend Lemma \ref{lem: est for rep} to the finite cover $(P,p)$ of $(Q,q).$ Suppose therefore that we are given again a triangulation of $Q$ and pull back this to a triangulation of $P,$ which we denote by $f^*\De.$ Suppose given an object $\b\in\obj\cF(X);$ and using the same notation as in Lemma \ref{lem: est for rep}, put $V:=HF^*(T_q^*Q,\b)$ and denote by $\rho: \pi_1(Q,q)\to \GL V$ the representation given by the $\fm^2$ products. Suppose that the restriction of $\rho$ to the subgroup $\pi_1(P,p)$ has a sub-representation $\si:\pi_1(P,p)\to \GL W$ on some one-dimensional subspace $W\sb V.$ Then we get a functor $\v\si:\Pi_1(P,f^*\De)\to\R$ as in Lemma \ref{lem: est for rep} with $(P,f^*\De)$ in place of $(Q,\De)$ and with $f^*\b\in\obj \cF(f^*X)$ in place of $\b\in\obj\cF(X).$ The estimate in Lemma \ref{lem: est for rep} holds {\it uniformly} with respect to $f:P\to Q;$ that is, the following holds. 
\begin{lem}\l{lem: paths in covers}
There exists a constant $C>0$ independent of $\b\in\obj\cF(X),$ independent of $f:(P,p)\to (Q,q)$ and such that $|\v\si|\le C.$
\end{lem}
\begin{proof}
The estimate is uniform with respect to $f:P\to Q$ because the Hamiltonians may be chosen first in $X.$ We pull them back to $f^*X$ so as to follow the proof of Lemma \ref{lem: est for rep}. Then for every directed edge $e$ in $(P,f^*\De)$ between vertices $p_0,p_1,$ the isomorphism $H_0(\Om_{p_0p_1},\La)\cong HW_\ex^0(T_{p_0}^*P,T_{p_1}^*P)$ maps the class $[e]\in H_0(\Om_{p_0p_1},\La)$ to the finite  sum $\sum_ic_i\ga_i\in HW_\ex^0(T_{p_0}^*P,T_{p_1}^*P)$ where we can use the same $\ga_i$'s as in the proof of Lemma \ref{lem: est for rep}. We can use accordingly the same constant $C>0$ as in the proof of Lemma \ref{lem: est for rep}, which implies the estimate above we want. 
\end{proof}
The re-scaling argument in Corollary \ref{cor: est for rep} implies 
\begin{cor}\l{cor: paths in covers}
In the circumstances of Lemma $\ref{lem: paths in covers},$ if the underlying Lagrangian of $\b$ lies within distance $\de>0$ from the zero-section $Q\subset T^*Q$ then we have $|\v\si|\le C\de.$ \qed
\end{cor}

\section{Proof of Theorem \ref{thm: main}}\l{sec: proof}
We begin in \S\ref{sec: proof main i} with the proof of Theorem \ref{thm: main} (i). 
In \S\ref{sec: key lemma} we prove the key lemma for Theorem \ref{thm: main} (ii). For this we do not need any hypothesis on $\pi_1Q.$
In \S\ref{sec: proof abelian} we prove Theorem \ref{thm: main} (ii) when $\pi_1Q$ is virtually abelian.
In \S\ref{sec: finite McLean} we prove a few facts we shall need to deal with finite covers of $Q.$
In \S\ref{sec: proof main iii iv} we prove Theorem \ref{thm: main} (iii),(iv).

\subsection{Proof of Theorem \ref{thm: main} (i)}\l{sec: proof main i}
Let $Y$ be a Calabi--Yau manifold, $Q\subset Y$ a closed connected embedded special Lagrangian, and $X$ a Weinstein neighbourhood of $Q\subset Y$ in the sense of Definition \ref{dfn: br}. We begin by proving
\begin{prop}\l{prop: pi1 triv}
If the fundamental group $\pi_1Q$ is trivial then the Fukaya category $\cF(X)$ is generated by the single object $Q,$ the zero-section.
\end{prop}
\begin{proof}
Let $\b$ be an object of $\cF(X)$ and consider its image under the Yoneda functor $H\cF(X)\to H \Mod CW^{*}(T_q^*Q, T_q^*Q ).$ Applying the homological transfer lemma, we may replace $  CW^{*}(T_q^*Q, T_q^*Q )$ by an $A_\infty$ algebra whose underlying graded vector space is given by its cohomology, which is isomorphic to $H_{-*}(\Om_qQ,\La)$ and hence  is supported in non-positive degrees. It follows (by the degree-filtration argument \cite[Appendix A]{Ab4}) that the Yoneda image of $\b$ is a repeated extension of ungraded modules over the degree $0$ algebra $  HW^{0}(T_q^*Q, T_q^*Q ) \cong H_0(\Om_qQ,\La).$
But $H_0(\Om_qQ,\La)$ is isomorphic to $\La[\pi_1Q]\cong\La$ so the image of $\b$  is a repeated extension of ungraded $\La$-vector spaces. 
As $\b$ is supported on a compact Lagrangian without boundary, the number of these extensions is finite and each ungraded piece is finite-dimensional over $\La.$
Recall now that $Q$ intersects $T_q^*Q$ transversely and exactly at one point $q$ and that $HF^*(T_q^*Q,Q)$ is one-dimensional over $\La.$
Then each ungraded piece of the Yoneda image of $\b$ may be written as $HF^*(T_q^*Q,Q)\otimes V$ for some finite-dimensional $\La$-vector space $V.$ 
As the functor $H\cF(X)\to H \Mod CW^{*}(T_q^*Q, T_q^*Q )$ is fully faithful, the object $\b$ is isomorphic in the cohomology Fukaya category to a repeated extension of objects of the form $Q\ot V.$
This completes the proof.  
\end{proof}

We use an algebraic lemma by the first author \cite[Lemma A.4]{Ab4} who proves the first general results on Arnold's nearby Lagrangian problem \cite{FSS2}. The original statement is over $\Z/2\Z$ but the proof shows that it applies to any field including $\La$:
\begin{lem}\l{lem: pi1 triv}
Let $\cA$ be a $\Z$-graded $\La$-linear triangulated $A_\iy$ category generated by one object $\mathbf{a}\in\cA$  whose endomorphism algebra $\hom^*({\bf a},{\bf a})$ is supported in non-negative degrees.
Then every object $\b\in \cA$ with $\hom^*(\b,\b)$ supported in non-negative degrees
is isomorphic in $H^0\cA$ to ${\bf a}[k]\otimes V$ for some integer $k\in\Z$ and for some finite-dimensional $\La$-vector space $V.$
\qed
\end{lem}

We complete now the proof of Theorem \ref{thm: main} (i).
\begin{proof}[Proof of Theorem {\rm\ref{thm: main} (i)}]
Suppose first that $\pi_1Q$ is trivial. Let $\b\in \obj\cF(X)$ be supported on a closed special Lagrangian $L.$ Then $\b$ is a non-zero object and by Proposition \ref{prop: TY0} the graded group $HF^*(\b,\b)$ is supported in non-negative degrees. Proposition \ref{prop: pi1 triv} and Lemma \ref{lem: pi1 triv} imply then that $\b$ is isomorphic in $H^0\cF(X)$ to some $Q[k]\otimes V,$ $k\in\Z,$ with the trivial bounding cochain.
Since $\b$ is non-zero it follows that $V$ is a non-zero vector space. 
Note also that there are vector space isomorphisms
\[HF^n(\b,\b)\cong HF^n(Q[k]\otimes V,Q[k]\otimes V)\cong HF^n(Q\otimes V,Q \otimes V)\cong HF^n(Q,Q)\otimes \End(V,V),\]
which is non-zero (by Corollary \ref{cor: PD} or because $HF^n(Q,Q)$ is isomorphic to the ordinary cohomology group). 
Hence it follows by Proposition \ref{prop: TY} (ii) that $L=Q.$

If $\pi_1Q$ is not trivial but finite, taking the universal cover $f:T^*P\to T^*Q$ we see that the pre-image $f^*L$ agrees with $P.$ So $L=Q$ as we have to prove.
\end{proof}

\subsection{The Key Lemma}\l{sec: key lemma}
We prove the single degree property of $C^0$ nearby special Lagrangians. 
\begin{lem}\l{lem: single degree}
Let $Y$ be a Calabi--Yau manifold of complex dimension $n,$ $Q\subset Y$ a closed connected embedded special Lagrangian, and $X$ a Weinstein neighbourhood of $Q\subset Y.$ Fix a point $q\in Q$ and a $\Z$-grading of $T_q^*Q$ with phase $\frac{n\pi}2\in\R$ at $q.$ Then there exists a neighbourhood $U$ of $Q\subset X$ such that for every object $\b\in \obj\cF(X)$ whose underlying Lagrangian is special $($of phase $0\in\R)$ and contained in $U,$ the graded vector space $HF^*(T_q^*Q,\b)$ is supported in degree $0.$
\end{lem}
\begin{rem}\l{rem: single degree}
Here the speciality is essential because otherwise $HF^*(T_q^*Q,\b)$ may be supported in two or more degrees; see Example \ref{ex: non-special}.

There is in fact an analytical theorem for those nearby special Lagrangians in a given homology class as in Theorem \ref{thm: main} (iii),(iv). These are close to the zero-section $Q$ as varifolds in the sense of geometric measure theory, so we can use Almgren's theorem (for area-minimizing currents). As explained by De Lellis and Spadaro \cite[Theorem 2.4]{LS} there exists a small set $B\subset Q$ such that outside the fibre over $B$ the nearby special Lagrangians $L$'s are $C^1$ close to the zero-section $Q.$ This implies that the Maslov index of the pair $(T_q^*Q,L)$ is always zero, so that the chain complex $CF^*(T_q^*Q,\b)$ is already supported in degree $0.$

In Theorem \ref{thm: main} (ii) we do not fix the homology class of the nearby special Lagrangians, for which we shall need Theorem \ref{lem: single degree}.
\end{rem}
\begin{proof}[Proof of Lemma \ref{lem: single degree}]
Suppose first that the Calabi--Yau structure near $Q\subset T^*Q$ is real analytic.
For each $k=1,\cdots,n-1$ we define a compactly-supported Hamiltonian perturbation $L$ of $T_q^*Q,$ which is special Lagrangian near $q\in T^*Q.$ 
To start with, take a $\C$-linear isomorphism $T_q(T^*Q)\cong\C^n$ which maps $T_qQ$ to $\R^n\sb\C^n,$ $T_q^*Q$ to $(i\R)^n,$ the symplectic form to $(i/2)\sum_{j=1}^ndz_j\w d\bar{z}_j$ and the holomorphic volume form to $\bw_{j=1}^ndz_j$ where $z_1,\cdots,z_n$ are the coordinates of $\C^n.$
Take $\th_1,\cdots,\th_k\in(-\frac\pi2,0)$ and $\th_{k+1},\cdots,\th_n\in(0,\frac\pi2)$ with
$\th_1+\cdots+\th_n=0$ so that the plane
$\Pi:=\{(e^{i\th_1}x_1,\cdots,e^{i\th_n}x_n)\in\C^n:x_1,\cdots,x_n\in\R\}$
is special Lagrangian with respect to the flat Calabi--Yau structure.
Take a small ball in $\Pi$ about $0$ and embed it into $T^*Q$ near $q,$ mapping $0\in T_q(T^*Q)$ to $q\in T^*Q.$
Denote the image of the ball by the same $\Pi$ for short.
Take a neighbourhood $W$ of $q$ in $Q$ over which $T^*Q$ is a product bundle.
We can suppose that the ball $\Pi$ lies in the product bundle $(T^*Q)|_W.$
Denote by ${\rm pr}:(T^*Q)|_W\to T_q^*Q$ the vertical projection of the product bundle.
Put $U_q:={\rm pr}(\Pi),$ which is a ball in $T_q^*Q$ about the origin. 
Choose a neighbourhood $U\subset T^*Q$ of $Q$ which is so small that $(U\cap T_q^*Q)\Subset U_q;$ here $A\Subset B$ means that the closure of $A$ is contained in $B.$

We perturb $\Pi$ into a special Lagrangian with respect to the Calabi--Yau structure of $U\sb X.$ 
There are two ways of doing this.
The first is to use Cartan--K\"ahler's theorem as Harvey and Lawson do \cite[Chapter III, Proof of Theorem 5.5]{HL}, which we can do because we have supposed that our Calabi--Yau structure is real analytic.
The second is to use the implicit function theorem again as Harvey and Lawson do \cite[Chapter III, Proof of Corollary 2.14]{HL}.
In either case we get a special Lagrangian near $q\in T^*Q.$
We can suppose that this is a graph over $U_q.$
Using a cut-off function we get then a compactly-supported Hamiltonian perturbation $L$ of $T_q^*Q$ such that $L\cap {\rm pr}^*U_q$ is a special Lagrangian in $U.$

Grade the Lagrangians $L$ and $\Pi$ by phase $0$ and the Lagrangians $T_q^*Q$ and $(i\R)^n$ by phase $\frac{n\pi}2$ at $q.$
For $j=1,\dots,k$ define an affine function $[0,1]\to (-\frac\pi2,\th_j)$ by $t\mapsto -\frac\pi2+t(\th_j+\frac\pi2).$ 
For $j=k+1,\dots,n$ define an affine function $[0,1]\to (\frac\pi2,\th_j)$ by $t\mapsto \frac\pi2+t(\th_j-\frac\pi2).$ 
Using these we get a Hamiltonian isotopy from $(i\R)^n$ to the plane $\Pi$ which does not cross $\R^n.$
As $k(-\frac\pi2)+(n-k) \frac\pi2=\frac{n\pi}2-k\pi$ there is a graded Hamiltonian isotopy from $(i\R)^n[k]$ to $\Pi.$
This induces a $\Z$-graded Hamiltonian isotopy from $T_q^*Q[k]$ to $L.$
Suppose now that $\b\in \obj\cF(X)$ is supported on a closed special Lagrangian $N\subset U.$
Since $(U\cap T_q^*Q)\Subset U_q$ it follows then, by the unique continuation principle, that the special Lagrangian part of $L$ does not agree with $N.$
We can thus apply Proposition \ref{prop: TY2} to $N,L$ and consequently $HF^*(L,\b)\cong HF^*(T_q^*Q[k],\b)$ is supported in degrees $1,\cdots,n-1.$
So $HF^*(T_q^*Q,\b)$ is supported in degrees $1-k,\cdots,n-1-k.$ Noting that this holds for every $k\in\{1,\cdots,n-1\}$ and that $\bigcap_{k=1}^{n-1}[1-k,n-1-k]=\{0\},$ it follows that $HF^*(T_q^*Q,\b)$ is supported in degree $0.$

Finally, if the K\"ahler form $\om$ near $Q\subset T^*Q$ is not real analytic we can approximate it by a real analytic one (after taking a K\"ahler potential) and find a compactly-supported Hamiltonian diffeomorphism $\ph: T^*Q\to T^*Q$ such that $\ph_*\om$ is real analytic. Applying to this the result above, we get a corresponding neighbourhood $V$ of $Q\subset T^*Q.$
Put $U:=\ph^*V.$ Let $\b\in \obj\cF(X)$ be supported on a closed special Lagrangian (relative to $\om$) and denote by $\ph\b\in \obj\cF(V)$ the push-forward.
Then $HF^*(T_q^*Q,\ph\b)$ is supported in the single degree $0.$
But as $\ph$ is compactly supported, we have $HF^*(T_q^*Q,\ph\b)\cong HF^*(T_q^*Q,\b),$ which completes the proof.
\end{proof}

The effect of the lemma above is that we do not have to deal with the extension problem which we explain now.
Let $\b$ be an object of $\cF(X).$
As in the proof of Theorem \ref{thm: main} (i) the Yoneda module $HF^*(T_q^*Q,\b)$ is a repeated extension of ungraded modules over $\La[\pi_1Q],$ that is, representations of $\pi_1Q.$
These extensions may be non-trivial.
In Example \ref{ex: non-special}, for instance, the direct sum of $Q$ and the graph of $\al$ are different from their surgery.
More precisely, these define non-isomorphic objects of $H^0\cF(X).$ They are made up of the same representations but with different extensions.

\subsection{Proof of Theorem \ref{thm: main} (ii)}\l{sec: proof abelian}
Let $Y$ be a Calabi--Yau manifold of complex dimension $n,$ complex structure $J,$ K\"ahler form $\om,$ and nowhere-vanishing holomorphic $(n,0)$ form $\Om.$ Let $Q\subset Y$ be a closed connected embedded special Lagrangian with $\pi_1Q$ virtually abelian, and $X$ a Weinstein neighbourhood of $Q\subset Y.$ 

We make several definitions we will use for a version \cite[Proposition 2.13]{J5} of McLean's theorem. Denote by $g$ the metric on $Q$ induced from $(J,\om).$ Define a $C^\iy$ function $\ps:Q\to(0,\iy)$ by
\e\l{ps}\ps^2\frac{\om^n}{n!}=\Bigl(\frac i2\Bigr)^n(-1)^{\frac{n(n-1)}2}\Om\wedge\ov\Om.\e
Denote by $d': C^\iy(T^*Q)\to C^\iy(Q,\R)$ the linear operator $\al\mapsto d^*(\ps \al)$ where $d^*$ is computed with respect to $g.$ Then $dd'+d'd:C^\iy(T^*Q)\to C^\iy(T^*Q)$ is an elliptic operator and we have $C^\iy(T^*Q)=(\ker d\cap\ker d')\oplus\im d\oplus \im d'$
where the direct sums are $L^2$ orthogonal with respect to $g.$ This implies that the de Rham cohomology group $H^1(Q,\R)$ is isomorphic to $\ker d\cap\ker\Ps.$ 

We fix a triangulation $\De$ of $Q$ and use the norm on $C^1(\De,\R)$ introduced before Corollary \ref{cor: est for norm}. The de Rham theorem implies then that there exists $M>0$ such that every $1$-cocycle $\be\in C^1(\De,\R)$ may be represented on $Q$ by some $1$-form $\al\in\ker d\cap\ker d'$ with $\sup_Q|\al|_{C^0}\le M|\be|$ where the pointwise $C^0$ norm is computed with respect to $g.$ The McLean theorem \cite[Proposition 2.13]{J5} implies in turn that there exists $\ep>0$ with the following property: for every $\al\in\ker d\cap\ker d'$ with $\sup_Q|\al|_{C^0}\le\ep$ the graph $Q^\al$ of $\al$ is Hamiltonian equivalent to some special Lagrangian in $X.$ Let $C>0$ be as in Corollary \ref{cor: est for rep}. Denote by $U\sb X$ the disc sub-bundle of radius $\de\in(0,(MC)^{-1}\ep]$ which is so small that we can use Lemma \ref{lem: single degree}.

Let $L\subset X$ be a closed special Lagrangian with $HF^*$ unobstructed and contained in $U.$ We use an algebraically closed Novikov field $\La$ to define the Fukaya category $\cF(X).$ Let $\b\in\obj\cF(X)$ be an object supported on $L.$ Corollary \ref{cor: special non-zero} implies then that $\b$ is a non-zero object. By Lemma \ref{lem: single degree} the graded vector space $HF^*(T_q^*Q,\b)$ is supported in a single degree. Denote by $\r:\pi_1Q\to \GL (HF^*(T_q^*Q,\b))$ the $\La$-linear representation made from the $\fm^2$ products in $H\cW(X).$ 

Suppose first that $\pi_1Q$ is abelian so that $\rho$ has a one-dimensional sub-representation $\si:\pi_1Q\to \La^*.$ Recall that $\v\si\in \hom(\pi_1Q,\R)\cong H^1(Q,\R)$ and denote by $\al$ the closed $1$-form on $Q$ which represents the de Rham class $\v\si$ and satisfies the equation $\Ps\al=0.$ Using the notation of Lemma \ref{lem: smooth deformations} write $\si\cong T^{-[\al]}\otimes E$ where $E$ is some group homomorphism $\pi_1Q\to\GL_1\La^0.$ This defines a rank-one $\La^0$ local system over $Q^\al\cong Q.$ As in Remark \ref{rem: ell inf} the category of rank-one free $\La^0$ modules is equivalent to that of one-dimensional valued vector spaces. Hence we get a filtered local system over $Q^\al$ which we denote by the same $E.$ Lemma \ref{lem: smooth deformations} implies then that $\si:\pi_1Q\to\La^*$ is isomorphic to the representation on $HF^*(E,T_q^*Q).$ As $\si$ is a sub-representation of $\rho$ there exists a degree-zero non-zero morphism from $\si$ to $\rho.$ Thus $HF^0(E,\b)\ne0.$

By Corollary \ref{cor: est for rep} we have $|\v\si|\le \de.$ We have chosen $\de>0$ so that $\sup_Q|\al|_{C^0}\le MC\de\le \ep.$ Then the graph $Q^\al$ is Hamiltonian equivalent to some special Lagrangian $Q'.$ As this Hamiltonian perturbation does not change the isomorphism class of $\b$ it follows from Proposition \ref{prop: TY} (i) that $Q'=L.$

For $\pi_1Q$ virtually abelian we have only to pass to a finite cover of $Q$ which has abelian fundamental group. This completes the proof.
\qed

\subsection{Finite Covers and McLean's Theorem}\l{sec: finite McLean}

We begin by proving a version of the de Rham theorem which is uniform with respect to finite covers.
\begin{prop}\l{prop: de Rham}
Let $(Q,g)$ be a closed connected Riemannian manifold, $\De$ a triangulation of $Q,$ and $f:P\to Q$ a finite cover. Let $\ta\in C^1(f^*\De,\R)$ be a $1$-cocycle of the simplicial complex $(P,f^*\De)$ and put $|\ta|:=\sup_{e\text{ is an edge of }(P,f^*\De)}|\ta(e)|.$ Then for every integer $k>0$ there exists $M>0$ independent of the finite cover $f:P\to Q$ and so large that the de Rham class $[\ta]\in H^1(P,\R)$ may be represented on $P$ by some closed $1$-form with $C^k$ norm $\le M|\ta|$ with respect to the induced metric $f^*g.$ 
\end{prop}
\begin{proof}
We use the $\R$-linear map $C^1(f^*\De,\R)\to \Om^1_P$ from the $\R$-vector space of $1$-cochains to that of $1$-forms, defined by Singer and Thorpe \cite[\S6.2, Lemma 1]{ST}, which maps $1$-cocycles to closed $1$-forms and induces the de Rham isomorphism of the two cohomology groups.
Part (4) of their lemma shows that for each edge $e$ in $P$ the corresponding $1$-form is supported near $e$ in $P.$
The process of assigning the $1$-form is thus local, which we can do essentially in $Q$ before passing to the finite cover $P.$ More precisely, denote by $\{e^*:e\text{ is an edge of }(P,f^*\De)\}\subset C^1(f^*\De,\R)$ the dual basis to $\{e\in C_1(f^*\De,\R):e\text{ is an edge of }(P,f^*\De)\}.$ Write $\ta=\sum_{e\text{ is an edge of }(P,f^*\De)}\ta_e e^*.$ The $1$-form corresponding to $e^*$ is then $C^k$ bounded uniformly with respect to $f:P\to Q.$ On the other hand, we have $|\ta_e|\le |\ta|$ for every $e.$ These imply the estimate we want.
\end{proof}

We prove then that McLean's theorem holds uniformly with respect to finite covers. 
\begin{prop}\l{prop: McLean and finite covers}
Let $Y$ be a Calabi--Yau manifold of complex dimension $n\ge3.$ Let $Q\subset Y$ be a closed connected embedded special Lagrangian, and $X$ a Weinstein neighbourhood of $Q\subset Y.$ Define a $C^\iy$ function $\ps:Q\to(0,\iy)$ by \eq{ps} $($using the K\"ahler form and the nowhere-vanishing holomorphic $(n,0)$ form of $Y).$ Then for every $D>0$ there exists $\ep>0$ such that the following holds: let $f:P\to Q$ be a finite cover of degree $\le D,$ and $\al$ a closed $1$-form on $P$ with $\sup_P|\al|_{C^0}\le \ep$ and $d^*(f^*(\ps \al))=0,$ both computed with respect to the induced metric $f^*g;$ then there exists a $C^\iy$ function $v:P\to\R$ such that the graph of $\al+dv$ is a closed embedded special Lagrangian in the pre-image $f^*X$ $($whose Calabi--Yau structure is induced by $f:T^*P\to T^*Q).$
\end{prop}
\begin{proof}

For an integer $k\ge0$ denote by $L^2_k(T^*P)$ the Sobolev space of $1$-forms on $P$ and by $L^2_k(P,\R)$ that of $0$-forms on $P.$ Denote by $\Xi\subset L^2_k(P,\R)$ the co-dimension one closed vector subspace consisting of $\xi\in L^2_k(P,\R)$ with $\int_P\xi d\mu=0$ where $d\mu$ is the volume form of $(P,f^*g).$ Denote by $\Ps:L^2_{k+2}(P,\R)\to\Xi$ the modified Laplacian $v\mapsto d^*(f^*\ps d v).$ We often regard $f^*\ps:P\to(0,\iy)$ as a multiplication operator and write $\Ps=d^*(f^*\ps) d.$ We prove 

\begin{lem}\l{lem: Green}
There exists $M_0>0$ depending on $D$ but independent of the finite cover $f:P\to Q$ and so large that for every $v\in L^2_2(P,\R)$ we have $\|dv\|_{L^2}\le M_0\|\Ps v\|_{L^2}.$
\end{lem}
\begin{proof}
Gallot \cite[3.1]{Gall} proves the following: let $(P,h)$ be a compact Riemannian manifold of dimension $n\ge3$ and of Ricci curvature $\ge -C\diam(P,h)^{-2}h;$ then there exists a constant $M>0$ which depends only on $n,C$ and is such that for every $v\in L^2_2(P,\R)$ with $\int_P vd\mu=0$ we have
\e\l{Gallot}\|v\|_{L^{2n/{n-2}}}\le M\frac{\diam(P,h)}{[\vol(P,h)]^{1/n}}\|dv\|_{L^2},\e 
where the Sobolev norms and $d\mu$ are computed with respect to $h.$ 
We apply this to $h=f^*g,$ the pull-back by $f:P\to Q.$ As the volume and diameter of $(P,f^*g)$ may be estimated in terms of those of $(Q,g)$ combined with the $\deg f\le D,$ there exists a constant $N>0$ independent of $f$ such that for every $v\in L^2_2(P,\R)$ with $\int_P vd\mu=0$ we have $\|v\|_{L^{2n/{n-2}}}\le N\|dv\|_{L^2}.$

Suppose now that  $v\in L^2_2(P,\R)$ is any and put $c:=\int_Pvd\mu.$ Then
\e\l{Sob0}\begin{split}
(\min_Pf^*\ps)\|dv\|_{L^2}\le (dv,f^*\ps dv)_{L^2}\le(v,d^*(f^*\ps)dv)_{L^2} 
=(v,\Ps v)_{L^2}\\
=(v-c,\Ps v)_{L^2}\le \|v-c\|_{L^{2n/(n-2)}} \|\Ps v\|_{L^{2n/{n+2}}}\le  N\|dv\|_{L^2} \|\Ps v\|_{L^{2n/{n+2}}},
\end{split}\e
where the $L^2$ products and the Sobolev norms are computed with respect to $f^*g.$ Note that $\min_Pf^*\ps=\min_Q\ps=:\de>0$ is independent of $f.$ The estimate \eq{Sob0} implies then
\e\l{Sob00} \|dv\|_{L^2}\le \de^{-1}M\|\Ps v\|_{L^{2n/{n+2}}}.\e
Since the volume of $(P,f^*g)$ is bounded above by a constant depending only on $(Q,g)$ and $D$ it follows that so is $\|1\|_{L^n}.$ H\"older's inequality, on the other hand, implies $\|\Ps v\|_{L^{2n/{n+2}}}\le \|1\|_{L^n}\|\Ps v\|_{L^2}.$ This combined with \eq{Sob00} completes the proof.
\end{proof}
\begin{rem}
If $P$ is a finite cover of $Q$ then the factor $\diam(P,h)[\vol(P,h)]^{-1/n}$ tends to infinity as the covering degree increases.
\end{rem}

Fix now $k>\frac n2$ so that we have the Sobolev embedding $L^2_{k+1}(T^*P)\subset C^1(T^*P).$ We prove that the Sobolev embedding and the elliptic regularity are $f$-uniform too.
\begin{lem}\l{lem: Sob}
There exists $M_1>0$ independent of the finite cover $f:P\to Q$ and so large that for every $u\in L^2_{k+1}(T^*P)$ we have
\e\l{Sob}
\sup_P|u|_{C^1}\le M_1\|u\|_{L^2_{k+1}}\le  (M_1)^2(\|u\|_{L^2}+\|(d+ d^*f^*\ps)u\|_{L^2_k}).\e
\end{lem}
\begin{proof}
Let $\rho>0$ be less than half the injectivity radius of the Riemannian manifold $(Q,g).$ For each point $a\in Q$ denote by $B_a\subset Q$ the open geodesic ball of radius $\rho$ about $a.$
As $Q$ is compact there exists a finite set $A\subset Q$ such that $\bigcup_{a\in A}B_a=Q.$
By the Sobolev embedding on each $B_a,$ $a\in A,$ there exists $M_a>0$ independent of $f:P\to Q$ and so large that for every $u\in L^2_{k+1}(T^*B_a)$ we have
$\sup_{B_a}|u|_{C^1}\le M_a\|u\|_{L^2_{k+1}(B_a)}.$
Note that $M:=\max_{a\in A}M_a$ is also independent of $f:P\to Q.$ 
We look now at the finite cover $f:P\to Q.$ Take any $x\in P$ with $f(x)\in A$ and denote by $B_x\subset P$ the lift of $B_{f(x)}\subset Q$ with respect to $f:P\to Q,$ which exists because $B_{f(x)}$ is simply connected.
Then for any $u\in L^2_{k+1}(T^*P)$ we have
\e\l{loc Sob}|u|_{B_x}|_{C^1(B_x)}=|f_*(u|_{B_x})|_{C^1(B_a)}\le M_a\|f_*(u|_{B_x})\|_{L^2_{k+1}(B_a)}= M_a\|u|_{B_x}\|_{L^2_{k+1}(B_x)}\e
which is bounded above by $M\|u\|_{L^2_{k+1}(P)}.$ Since $P$ is covered by $B_x$'s with $f(x)\in A$ it follows by \eq{loc Sob} that $\|u\|_{L^2_{k+1}(P)}\le M\|u\|_{L^2_{k+1}(P)},$ proving the left inequality of \eq{Sob}. 

We turn now to the other part. We show first that for $a,b\in A$ with $B_a\cap B_b\ne\es$ the non-empty intersection is necessarily connected. If there were two components, there would be two distinct geodesics connecting these two.
But $B_a,B_b$ have been chosen to have the common radius $\rho$ which is less than half the injectivity radius. So $B_a\cup B_b$ lies in a single geodesic ball, which contradicts that the two geodesic exist. Thus $B_a\cap B_b$ is connected. The union $B_a\cup B_b$ is accordingly simply connected.

As $Q$ is covered by $B_a$'s of radius $\rho>0$ there exists $\si\in(0,\rho)$ so close to $\rho$ that if we denote by $B_a'\sb B_a$ the concentric ball of radius $\si$ then $(B_a')_{a\in A}$ still covers $Q.$ 
Then for $u\in L^2_{k+1}(T^*P)$ we have
\e\l{sum}
\|u\|_{L^2_{k+1}}
=\sum_{y\in f^*A} \|f_*(u|_{B_y})\|_{L^2_{k+1}(B_{f(y)})}
\le \sum_{y\in f^*A}\sum_{B_a'\cap B_{f(y)}\ne\es}\|f_*(u|_{B_y})\|_{L^2_{k+1}(B_{f(y)}\cap B_a')}.\e
For $a\in A$ and $x\in f^{-1}(a)$ denote by $Y_{ax}\sb f^*A$ the set of $y$ such that $B_{f(y)}\cap B_a'\ne\es$ and the simply connected set $B_{f(y)}\cap B_a$ lifts to $B_y\cap B_x.$
Exchanging the order of the two sums in \eq{sum} we see then that
\e\l{sumsumsum}
\begin{split}
\|u\|_{L^2_{k+1}}
&=\sum_{a\in A}\sum_{x\in f^{-1}(a)}\sum_{y\in Y_{ax}}\|f_*(u|_{B_y})\|_{L^2_{k+1}(B_{f(y)}\cap B_a')}\\
&=\sum_{a\in A}\sum_{x\in f^{-1}(a)}\sum_{y\in Y_{ax}}\|f_*(u|_{B_y\cup B_x})\|_{L^2_{k+1}(B_{f(y)}\cap B_a')}\\
&\le \sum_{a\in A}\sum_{x\in f^{-1}(a)}(\# Y_{ax})\|f_*(u|_{B_x})\|_{L^2_{k+1}(B_a')}.
\end{split}
\e
Note that there is an injection $Y_{ax}\to\{z\in A: B_a'\cap B_z\ne\es\}$ defined by $y\mapsto f(y)$ so that the number $\#Y_{ax}$ is bounded above by some $f$-independent constant $N>0.$
It follows then from \eq{sumsumsum} that
\e\l{sumsum}
\|u\|_{L^2_{k+1}}\le N\sum_{a\in A}\sum_{x\in f^{-1}(a)}\|f_*(u|_{B_x})\|_{L^2_{k+1}(B_a')}.
\e
On the other hand, by the ordinary a priori estimates over $Q$ there exists $M>0$ independent of $f:P\to Q$ and so large that for every $a\in A$ and $x\in f^{-1}(a)$ we have
\[\|f_*(u|_{B_x'})\|_{L^2_{k+1}(B_a')}\le M(\|f_*(u|_{B_x})\|_{L^2(B_a)}+\|(d+d^*\ps)f_*(u|_{B_x})\|_{L^2_k(B_a)}).\]
Hence it follows by \eq{sumsum} that
\[\begin{split}
\|u\|_{L^2_{k+1}}
&\le MN\sum_{a\in A}\sum_{x\in f^{-1}(a)} (\|f_*(u|_{B_x})\|_{L^2(B_a)}+\|(d+d^*\ps)f_*(u|_{B_x})\|_{L^2_k(B_a)})\\
&\le MN\sum_{a\in A}\sum_{x\in f^{-1}(a)} 
(\|u\|_{L^2(B_x)}+\|(d+d^*f^*\ps)u\|_{L^2_k(B_x)})\\
&= MN(\|u\|_{L^2(P)}+\|(d+d^*f^*\ps)u\|_{L^2_k(P)}).
\end{split}\]
Thus $M_1:=MN$ will do for the right part of \eq{Sob}.
\end{proof}
\begin{rem}
For the proof above we have not used the degree bound $D$ as we did after \eq{Gallot}. This may be justified by recalling that the latter \eq{Gallot} is a global property of Riemannian manifolds. The current Sobolev inequality, which is of the form $|u|_{C^1}\le M\|u\|_{L^2_{k+1}},$ would hardly be such a global property. Interior estimates about elliptic regularity are also of local nature.
\end{rem}

The definition of $\Xi$ implies that $\Ps :L^2_{k+2}(P,\R)\to\Xi$ is surjective, so there exists a right inverse to this, which we denote by $G:\Xi\to L^2_{k+2}(P,\R).$ We prove then that the following holds.
\begin{cor}\l{cor: Green}
There exists $M_2\ge 1$ independent of the finite cover $f:P\to Q$ and so large that for every $\xi\in\Xi$ we have $\|dG\xi\|_{L^2_{k+1}}\le M_2\|\xi\|_{L^2_k}.$
\end{cor}
\begin{proof}
Applying Lemma \ref{lem: Green} to $G\xi\in L^2_{k+2}(P,\R)\subset L^2_2(P,\R)$ in place of $v$ we see that $\|dG\xi\|_{L^2}\le M_0\|d^*\ps dG\xi\|_{L^2}=M_0\|\xi\|_{L^2}.$
This with the right inequality of \eq{Sob} implies
\[\|dG\xi\|_{L^2_{k+1}} \le M_1(\|dG\xi\|_{L^2}+ \|(d+d^*\ps)dG\xi\|_{L^2_k}
 \le M_1(M_0\|\xi\|_{L^2}+ \|\xi\|_{L^2_k}).\]
Thus $M_2:=M_1(M_0+1)$ will do.
\end{proof}

We complete now the proof of Proposition \ref{prop: McLean and finite covers}. We follow the standard proof of McLean's theorem \cite[Proposition 2.13]{J5} using the uniform estimates above. Denote by $\Om$ the nowhere-vanishing holomorphic $(n,0)$ form of $X.$ Let $\ep_0>0$ be so small that if $\be$ is a $1$-form on $P$ with $\sup|\be|\le\ep_0$ then the graph of $\al+\be$ lies in the pre-image $f^*X\subset T^*P.$ Put $V:=\{v\in L^2_{k+2}(P,\R): \|dv\|_{L^2_{k+1}}\le M_1^{-1}\ep_0\}.$ The Sobolev inequality \eq{Sob} implies then that for every $v\in V$ the graph of $\al+dv$ lies in the pre-image $f^*X$ and we can therefore define on $P$ the $n$-form $(\al+dv)^*\Im f^*\Om.$ Define an operator $\Ph:V \to \Xi$ by $-\Ph v\,d\mu:=(\al+dv)^*\Im f^*\Om$ for $v\in V.$ We have $\Ph v\in \Xi$ because $\int_P-\Ph v\, d\mu=\int_P(\al+dv)^*\Im f^*\Om$ depends only on the homology class $(\al+dv)_*[P]=[P]\in H_n(T^*P,\R)$ whose pairing with $[\Im f^*\Om]\in H^n(T^*P,\R)$ vanishes because $P\subset T^*P$ is a special Lagrangian. So $\Ph:V\to \Xi$ is well defined.   

Standard computation shows that $\Ph$ is differentiable at $v=0$ with respect to the Sobolev norms. By hypothesis we have $\Ps\al=0$ and hence we see that the differential $\nb_0\Ph:L^2_{k+2}(P,\R)\to \Xi$ agrees with $\Ps.$ Corollary \ref{cor: Green} implies then that for $\xi\in \Xi$ with $\|\xi\|\le M_2^{-1}M_1^{-1}\ep_0=:\ep_1$ we have $\|dG\xi\|_{L^2_{k+1}} \le M_1^{-1}\ep_0$ so $G\xi\in V.$ We solve the equation $\Ph G\xi=0$ for $\xi\in L^2_k(P,\R)$ with $\|\xi\|\le \ep_1.$ Define $\Th:V\to L^2_k(P,\R)$ by $\Th :=-\Ph +\Ps.$ We seek then a fixed point of the composite $\Th G$ and use therefore the contraction mapping principle. As $\Th$ may be computed pointwise on $P,$ there exists $M>0$ independent of $f$ and so large that the following holds: let $v,v'\in C^1(P,\R)$ satisfy $|dv|,|dv'|<\ep_0$ at every point of $P;$ and put $\be=\al+dv,$ $\be':=\al+dv';$ then
\[|\Th v-\Th v'|_{C^k}\le M|\be-\be'|_{C^{k+1}}(|\be|_{C^{k+1}}+|\be'|_{C^{k+1}})\]
at every point of $P.$ Integrating this and using the Cauchy--Schwarz inequality we get
\e\l{int P} \int_P|\Th \be-\Th \be'|_{C^k}d\mu\le M\sqrt{\int_P|\be-\be'|_{C^{k+1}}^2d\mu}\sqrt{\int_P(|\be|_{C^{k+1}}+|\be'|_{C^{k+1}})^2d\mu}.\e
Putting $A:=|\be|_{C^{k+1}}$ and $B=|\be'|_{C^{k+1}}$ we have 
\[\sqrt{\int_P(A+B)^2d\mu}
\le 
\sqrt{2\int_P(A^2+B^2)d\mu}
\le 
\sqrt2 \left(\sqrt{\int_PA^2d\mu}+\sqrt{\int_PB^2d\mu}\right).\]
This with \eq{int P} implies
\[\|\Th v-\Th v'\|_{L^2_{k+1}}\le \sqrt 2 M\|\be-\be'\|_{L^2_k}(\|\be\|_{L^2_k}+\|\be'\|_{L^2_k})\le 2\sqrt 2 M(\|\al\|_{L^2_k}+\ep_0)\|v-v'\|_{L^2_k}.\]
Hence choosing $\ep_2\in(0,\min\{(3\sqrt 2 M(\|\al\|_{L^2_k}+\ep_0))^{-1},\ep_1\})$ and using Corollary \ref{cor: Green} we see that $\Th G$ is a contraction map from the closed ball $\{\xi\in \Xi:\|\xi\|_{L^2_k}\le \ep_2\}$ to itself. So there exists a fixed point $\xi\in \Xi$ of the operator $\Th G,$ and $\xi=\Th G\xi=-\Ph G\xi+\Ps G\xi=-\Ph G\xi+\xi.$ Thus $\Ph G\xi=0.$ Now $\be:=\al+G\xi$ satisfies the elliptic system $d\be=\Ph\be=0$ and is therefore $C^\iy.$ The graph of $\be$ is thus the special Lagrangian we want.
\end{proof}

Here is a corollary of Proposition \ref{prop: de Rham} and the estimates in the proof above.
\begin{cor}\l{cor: de Rham}
Fix a constant $D>0$ and suppose that in the circumstances of Proposition $\ref{prop: de Rham}$ the finite cover $f:P\to Q$ has degree $\le D.$ Then there exists $M>0$ independent of $f:P\to Q$ and so large that the de Rham class $[\ta]\in H^1(P,\R)$ may be represented on $P$ by some closed $1$-form $\al$ with $C^1$ norm $\le M|\tau|$ and $d^*(f^*\ps \al)=0$ where the $C^1$ norm and $d^*$ are computed with respect to the induced metric $f^*g.$
\end{cor}
\begin{proof}
Fix an integer $k>\frac n2.$ Then by Proposition \ref{prop: de Rham} there exist a constant $N_0>0$ independent of $f$ and a closed smooth $1$-form $\be$ on $P$ with $\|\be\|_{L^2_{k+1}}\le N_0|\tau|.$
Define a $1$-form $\al$ on $P$ by 
$\al:= \be- d G(d^*\ps\be).$
Then $d \al=0$ and $d^*\ps\al=d^*\ps\be-\Ps G(d^*\ps\be)=0.$
We estimate the $C^1$ norm of $\al.$
As $G:\Xi\to L^2_{k+2}(P,\R)$ is again uniformly bounded, there exist constants $N_1,N_2>0$ independent of $f,\be$ and so large that 
\[ \|G(d^*\ps\be)\|_{L^2_{k+2}} \le N_1 \|d^*\ps\be\|_{L^2_k}=N_1N_2\|\be\|_{L^2_{k+1}}\le N_0N_1N_2|\tau|. \]
By \eq{Sob} there exists a constant $N_3$ independent of $f,\be$ and so large that $\sup_P|\be|_{C^1}\le N_3 \|\be\|_{L^2_{k+1}}\le N_0N_3|\tau|$ and
\[ \ts \sup_P|d G(d^*\ps\be)|_{C^1}\le  N_3\|d G(d^*\ps\be)\|_{L^2_{k+1}}\le
N_3\|G(d^*\ps\be)\|_{L^2_{k+2}}\le  N_0N_1N_2N_3|\tau|.\]
Thus $|\al|_{C^1}\le |\be|_{C^1}+ 
|d G(d^*\ps\be)|_{C^1}\le N_0N_3(1+N_1N_2)|\tau|,$ which completes the proof. 
\end{proof}

\subsection{Proof of Theorem \ref{thm: main} (iii),(iv)}\l{sec: proof main iii iv}
We prove Theorem \ref{thm: main} (iii) now. For $Q$ of dimension $n\le2$ the hypothesis that $\pi_1Q$ should be virtually solvable implies that $\pi_1Q$ virtually abelian. So we can use the result of (ii).  

Suppose therefore that $Q$ has dimension $n\ge3$ so that we can use the results of \ref{sec: finite McLean}. Suppose also that $\pi_1Q$ is solvable at first. We begin by choosing a disc sub-bundle $U\subset T^*Q.$ We impose two conditions. Firstly, $U$ should be so small that we can use Lemma \ref{lem: single degree}. Secondly, let $C>0$ be as in Lemma \ref{lem: est for rep} or Corollary \ref{cor: paths in covers}; $M>0$ as in Corollary \ref{cor: de Rham}; and $\ep>0$ as in Proposition \ref{prop: McLean and finite covers}. Then the radius $\de$ of $U$ should be $\le (MC)^{-1}\ep.$

Let $L\subset X$ be a closed special Lagrangian with $HF^*$ unobstructed and contained in $U.$ We use again an algebraically closed Novikov field $\La$ to define the Fukaya category $\cF(X).$ Let $\b\in\obj\cF(X)$ be an object supported on $L.$
Then $\b$ is a non-zero object. By Lemma \ref{lem: single degree} the graded vector space $HF^*(T_q^*Q,\b)$ is supported in a single degree. Denote by $\r:\pi_1Q\to \GL HF^*(T_q^*Q,\b)$ the representation made from the $\fm^2$ products in $H\cW(X).$
By hypothesis the group $\pi_1Q$ is solvable; and accordingly, so is its image $\r(\pi_1Q)\subset \GL HF^*(T_q^*Q,\b).$ We apply to this a version with estimate of the Lie--Kolchin theorem.
\begin{thm}[Lie--Kolchin]\l{thm: LC}
There exists a function $F:\{1,2,3,\dots\}\to \{1,2,3,\dots\}$ such that for $m\in\{1,2,3,\dots\}$ and for $\La$ an algebraically closed field (which need not be the Novikov field) every solvable subgroup of $\GL_m(\La)$ has a finite-index triangularizable subgroup of index $\le F(m).$
\end{thm}

By hypothesis the dimension of $V:=HF^*(T_q^*Q,\b)$ is bounded above by a given constant, say $m.$ Consequently there exists a finite-index subgroup of $\pi_1Q$ of index $\le \max\{F(1),\dots,F(m)\}$ and on which $\r$ is triangularisable. Let $f:(P,p)\to(Q,q)$ be the corresponding finite cover,
and take a one-dimensional sub-representation $\si:\pi_1(P,p)\to \La^*$ of the triangularisable group. 

Recall that $\v\si\in \hom(\pi_1P,\R)\cong H^1(P,\R)$ and denote by $\al$ the closed $1$-form on $P$ that represents the cohomology class $\v\si$ and satisfies the equation $d^*(f^*\ps d\al)=0.$ Denote by $P^\al\sb T^*P$ the graph of $\al.$
Using the notation of Lemma \ref{lem: smooth deformations} write $\si\cong T^{-[\al]}\otimes E$ where $E$ is some group homomorphism $\pi_1P\to\GL_1\La^0.$ This defines a rank-one $\La^0$ local system over $P^\al.$ Hence we get a filtered local system over $P^\al$ which we denote by the same $E.$
Lemma \ref{lem: smooth deformations} implies then that $\si:\pi_1P\to\La^*$ is isomorphic to the representation on $HF^*(T_p^*P,E).$
Denote by $f^*\rho$ the restriction to $\pi_1P$ of $\rho:\pi_1Q\to \GL HF^*(T_q^*Q,\b).$
As $\si$ is a sub-representation of $f^*\rho$ there exists a degree-zero non-zero morphism from $\si$ to $f^*\rho.$ Hence it follows that there exists a degree-zero non-zero morphism from $f^*\b$ to $E.$ Thus $HF^0(f^*\b,E)\ne0.$

Choose now a triangulation $\De$ of $Q$ so that we can define the functor $\v\si:\Pi_1(P,f^*\De)\to\R.$ By Corollary \ref{cor: paths in covers} we have $|\v\si|\le C\de.$ Lift $\v\si$ to a $1$-cocycle $\ta$ of $(P,f^*\De)$ with $\R$ coefficients. Using the notation of Proposition \ref{prop: de Rham} we have then $|\ta|\le C\de.$ We can therefore apply Corollary \ref{cor: de Rham} to the cohomology class $[\ta]\in H^1(P,\R),$ which may now be represented on $P$ by some closed $1$-form $\al$ with  $d^*(\ps d\al)=0$ and $\sup_P|\al|_{C^0}\le MC\de\le \ep.$
Proposition \ref{prop: McLean and finite covers} implies then that the graph of $\al$ is Hamiltonian equivalent to some special Lagrangian $P'.$ As this Hamiltonian perturbation does not change the isomorphism class of $f^*\b$ it follows from Proposition \ref{prop: TY} (i) that $P'$ is an irreducible component of $f^*L.$
Thus $L$ is unbranched as we want to prove.

In the general case in which $\pi_1Q$ is virtually solvable, all we have to do is to pass to a finite cover of $Q$ whose fundamental group is solvable. This completes the proof of Theorem \ref{thm: main} (iii). 

We turn now to the proof of Theorem \ref{thm: main} (iv). We follow the proof above till we use Theorem \ref{thm: LC}. We give then a new argument. By hypothesis the group $\pi_1Q$ has no non-abelian free subgroup; and accordingly, its image $\r(\pi_1Q)\subset \GL HF^*(T_q^*Q,\b)$ has no such subgroups either. We apply to this the following version of Tits' theorem \cite[Corollary 1]{Tits}.

\begin{thm}\l{thm: Tits}
For $m\in\{1,2,3,\dots\}$ and for $\La$ any field (of any characteristic) every finitely generated subgroup of $\GL_m(\La)$ that has non-abelian free subgroups has a finite-index solvable subgroup.
\end{thm}
\begin{rem}
This will be the most convenient to us because we are concerned only with finitely generated groups. The paper \cite{Tits} deals with those linear groups which need not be finitely generated.
\end{rem}

Theorem \ref{thm: Tits} and a few other known facts imply
\begin{cor}\l{cor: Tits}
There exists a function $G:\{1,2,3,\dots\}\to \{1,2,3,\dots\}$ such that for $m\in\{1,2,3,\dots\}$ and for $\La$ a field of characteristic $0$ every finitely generated subgroup of $\GL_m(\La)$ that has non-abelian free subgroups has a finite-index solvable subgroup of index $\le G(m).$
\end{cor}
\begin{proof}
We consult Wehrfritz \cite{Wehr0,Wehr} without tracing back the original papers of older results.
It is known \cite[Corollary 3.8]{Wehr} that every linear group $\Ga\le \GL_m(\La)$ has a unique normal solvable subgroup $\Si.$ Let $\Ga$ be finitely generated as above. It is then known \cite[Lemma 2]{Wehr0} that the quotient group $\Ga/\Si$ is a {\it finite} subgroup of $\GL_\mu(\La)$ for some $\mu\in\{1,2,3,\dots\}.$ This $\mu$ comes from other results \cite[Theorems 6.3 and 6.4]{Wehr} which show indeed that $\mu\le (m!)^2.$ There is an old theorem of Jordan \cite[Theorem 9.2]{Wehr} which says that every finite subgroup of $\GL_\nu(\La)$ has a finite-index abelian subgroup of index $\le G'(\nu)$ where $G'(\nu)$ depends only on $\nu.$ So there exists a finite-index abelian subgroup $A\le\Ga/\Si$ of index $\le G(m)$ where $G(m)$ depends only on $m.$ Denote by $\Th\le\Ga$ the pull-back of $A$ by the projection $\Ga\to\Ga/\Si.$ There is then an exact sequence $\Th\cap \Si\to \Th\to A,$ which implies that $\Th$ is solvable by abelian and accordingly solvable. Also $\Th\le\Ga$ is a finite-index subgroup of index less than that of $A\le \GL_\mu(\La),$ which is $\le G(m)$ as we have to prove.
\end{proof}

Recall from hypothesis that the dimension of $V:=HF^*(T_q^*Q,\b)$ is bounded above by a given constant $m.$ Corollary \ref{cor: Tits} and Theorem \ref{thm: LC} imply then that there exists a finite-index subgroup of $\pi_1Q$ on which the representation $\r:\pi_1Q\to \GL V$ is triangularisable and such that the index of the subgroup is bounded above by a constant depending only upon $m.$ The rest of the proof is the same as that for Theorem \ref{thm: main} (iii). We have thus proved Theorem \ref{thm: main} (iv).
\qed

\subsection{Versions of Theorem \ref{thm: main} (iii), (iv)}\l{sec: discuss}
We discuss first the difference between (iii) and (iv) in Theorem \ref{thm: main}. In the proof of (iv), if we did not suppose that $\La$ has characteristic $0$ then there would be a problem in the proof of Corollary \ref{cor: Tits}. Jordan's theorem in fact fails for $\La$ of characteristic $p>0.$ Take for instance $\La$ to be the algebraic closure of the finite field $\F_p:=\Z/p\Z$ and take $G:=\SL_m(\La).$ There is then a sequence $\{G_k:=\SL_m(\F_{p^k})\}_{k=1}^\iy$ of subgroups of $G;$ and as is well known, their normal subgroups will lie in the centre of $G$ except for $(p,m,k)=(2,2,1)$ or $(3,2,1).$ So the indices of them are nearly the order of $G_k$ which tend to infinity as $k$ increases. 

We make a positive characteristic version of Theorem \ref{thm: main} (iv), supposing that there exist infinite {\it finitely presented} torsion groups, although no such examples are known. We need these to be finitely presentable because they are $\pi_1Q$ with $Q$ a compact manifold.

\begin{prop}\l{prop: p version}
Let $Y,Q,X$ be as in Theorem \ref{thm: main}, and $p>0$ a prime number. Let $\pi_1Q$ be an infinite torsion group with no elements of order $p.$ Then for every $R>0$ there exists a neighbourhood $U\sb X$ of $Q$ such that the following holds: let $L\subset X$ be a closed irreducibly immersed special Lagrangian of degree $\le R,$ contained in $U,$ and which has $HF^*$ unobstructed with respect to some filtered local system of rank $\le R$ over some Novikov field of characteristic $p;$ then $L$ is unbranched.
\end{prop}
\begin{proof}
Recall that Theorem \ref{thm: Tits} holds in characteristic $p>0.$ We make a characteristic-$p$ version of Corollary \ref{cor: Tits}: {\it there exists a function $G:\{1,2,3,\dots\}\to \{1,2,3,\dots\}$ such that for $m\in\{1,2,3,\dots\},$ for $p>0$ a prime and for $\La$ a field of characteristic $p$ every subgroup of $\GL_m(\La)$ that has non-abelain free subgroups and has no elements of order $p$ has a finite-index solvable subgroup of index $\le G(m).$} As we have mentioned above, we have only to be careful about the last step to the proof of Corollary \ref{cor: Tits}. We replace Jordan's theorem \cite[Theorem 9.2]{Wehr} by Schur's theorem \cite[Corollary 9.4]{Wehr}: every finite subgroup of $\GL_\nu(\La)$ with {\it no elements of order $p$} has a finite-index abelian subgroup of index $\le G'(\nu)$ where $G'(\nu)$ depends only on $\nu.$ Once this has been done we can follow the proof of Theorem \ref{thm: main} (iv) to show that Proposition \ref{prop: p version} holds.
\end{proof}

We make another version of Theorem \ref{thm: main} (iii),(iv) which is closer to Theorem \ref{thm: main} (ii), not referring to the constant $R,$ but making again a strong hypothesis on $\pi_1Q.$ 

\begin{prop}\l{prop: strong pi1}
Let $Y,Q,X$ be as in Theorem \ref{thm: main}. Let $\pi_1Q$ have no non-abelian free subgroups and satisfy the property $\tau.$ Then there exists a neighbourhood $U\sb X$ of $Q$ such that there exists a neighbourhood $U\sb X$ of $Q$ such that every closed irreducibly immersed special Lagrangian $L\subset X$ with $HF^*$ unobstructed and contained in $U$ is unbranched. 
\end{prop}
\begin{proof}
The proof is in fact easier. The property $\tau$ implies indeed a uniform eigenvalue estimate \cite[Proposition 2.5]{LZ} for the Laplacians over finite covers of $Q.$ More precisely, as the linear operator $\Ps$ is a modified Laplacian we shall need the following computation: in the circumstances of Proposition \ref{prop: McLean and finite covers} introduce a conformal change $g':=\ps^{2/{n-2}}g$ of the Riemannian metric $g$ on $Q;$ then $\Ps=\ps^{2n/{n-2}}\De'$ where $\De'$ is the {\it ordinary} Laplacian of $(Q,g').$ Proposition \ref{prop: McLean and finite covers} will then hold automatically; and moreover, we shall not need the index estimates as in the proof of Theorem \ref{thm: main} (iii),(iv).
\end{proof}

There is however a problem: no known examples of strictly infinite, finitely presentable groups satisfy the hypotheses above. Here we are interested in strictly infinite groups because of Theorem \ref{thm: main} (i), in which we have proved already a stronger result for $\pi_1Q$ finite.

At least $3$-manifold groups do not satisfy all the conditions above. Every $3$-manifold group is always residually finite; and as we shall see in Lemma \ref{lem: 3-manifold} those $3$-manifold groups which have no non-abelian free subgroups are amenable. These two contradict the property $\tau$ \cite[Proposition 1.32]{LZ} (see also Brooks \cite{Br} and Sunada \cite{Sunada}).
  
There are examples \cite{Iv,LM,OS} of infinite, finitely presentable non-amenable groups without non-abelian free subgroups (counterexamples to von Neumann's problem) which however do not satisfy the property $\tau$ (because they have surjections onto $\Z$). There are, on the other hand, infinite torsion groups \cite{Ersh} with the stronger property T, called Golod--Shaferevich groups, and $C^*$ simple groups \cite{OO} with the property T which has no non-abelian free groups; but these would hardly be finitely presentable.

\subsection{Bad Immersions}\l{sec: bad immersions}
Theorem \ref{thm: main} holds for some badly immersed special Lagrangians. 
In the known construction of Fukaya categories, the underlying Lagrangians are required at least to be cleanly immersed \cite{Fuk}.
On the other hand, in our nearby problem we want to include as many (special) Lagrangians as possible, so we introduce the following notion \cite[before Corollary 4.6]{I}.

Let $\hat L$ be a closed manifolds and $\io:\hat L\to X$ be a Lagrangian immersion. Put $L:=\io(\hat L).$
We say that an object $\b$ in $H\cF(X)$ is supported {\it near} $L$ if there exist arbitrarily-small Lagrangian perturbations of $L$ which underlie some objects in the same isomorphism class $[\b]$ in $H^0\cF(X).$ 
More concretely, this means that for every $\ep>0$ there exists a perturbation of the immersion $\hat L\to X$ with $C^k$ norm bounded by $\epsilon$ for all $k$, and which supports an object representing the given isomorphism class $[\b].$

Theorem \ref{thm: main} holds for a special Lagrangian $L$ which admits such an isomorphism class $[\b]$ as above. 
The reason is that the results of the paper \cite{I} are stated and proved for objects of $\cF(X)$ supported near a special Lagrangian. In particular, Proposition \ref{prop: TY} and Lemma \ref{lem: single degree} are still valid for these objects. So the proof of Theorem \ref{thm: main} will work. 

But the condition above on $[\b]$ is rather strong. Here is one possible way of verifying it.
Suppose given a smooth family $\io^t:\hat L\to X,$ $t\in[0,1],$ such that $\io^0=\io$ and the other $\io^t,$ $t\in(0,1],$ are all generic and mutually isotopic under {\it domain} Hamiltonians on $\hat L$ in the sense of Akaho--Joyce \cite[Definition 13.14 (ii)]{AJ} (this is a family version of the classical transversality condition in differential topology).
If we could lift such an isotopy to an ambient Hamiltonian isotopy, then it would follow that any isomorphism class in $H^0\cF(X)$ supported on the immersion $\io^1$ is also supported on $\io^t$ for all $t,$ generalising the Hamiltonian invariance theoreom by Fukaya, Oh, Ohta and Ono \cite{FOOO}. In general, such a lift does not exist because then those strips which bound arcs between self-intersection points will have constant area under the isotopy. We can perhaps use the methods of \cite{PW} to prove invariance statements for this perturbation problem.

\section{Tsai--Wang's Method}\l{sec: analytical}
We recall now Tsai--Wang's method and explain then the relevant examples of $C^0$ nearby special Lagrangians. In this section we do not use Floer cohomology groups or Fukaya categories. 

We begin by recalling the key formula from geometric measure theory. Let $Y$ be a Riemannian manifold. Given a vector field $v$ on $Y,$ a point $y\in Y$, and an $\R$-vector subspace $S\sb T_yY$ we denote by $\di_Sv$ the trace of the map $S\sb T_yY\to T_yY\to S$ where the two arrows are the covariant derivative $\nb v:T_yY\to T_yY$ and the orthogonal projection $T_yY\to S.$ In particular, if $v$ is the gradient vector field of a smooth function $f:Y\to\R$ then
$\di_S\grad f=\tr_S\nb^2f.$
We use also the notion of {\it rectifiable integral varifolds} in $Y$ which we treat, following Simon \cite{Sm}, as Radon measures on $Y.$
Recall that if $V$ is a compactly supported stationary integral varifold in $Y$ then for every vector field $v$ on $Y$ we have $\int_Y\di_{TV}v\,dV=0.$
In particular, 
\e\l{FV}\int_Y\tr_{TV}\nb^2f\,dV=0\e
for every smooth function $f:Y\to\R.$

We recall now the following result from the literature \cite{LotayS,TW0,TW}. We prove this to show how to use \eq{FV}.
\begin{prop}\l{TW}
Let $Y$ be a Ricci-flat Calabi--Yau manifold of complex dimension $n,$ and $Q\subset Y$ a closed embedded special Lagrangian which has positive Ricci curvature with respect to the induced metric from $Y.$
Then there exists a neighbourhood $U\sb Y$ of $Q$ in which every compactly supported stationary integral $n$-varifold is supported on $Q.$
\end{prop}
\begin{proof}
Tsai and Wang prove that $Q\subset Y$ is {\it strongly stable} in their sense \cite[Proposition A(iii)]{TW} and that the following holds \cite[Proposition 4.1]{TW}: there exists an open neighbourhood $U\sb Y$ of $Q$ such that for every point $y\in U$ and for every $n$-dimensional plane $S\sb T_xU$ we have at $y$
\e\l{SS}\tr_S\nb^2d_Q^2\ge c\,d_Q^2\e
where $d_Q:U\to[0,\iy)$ is the distance from $Q$ and $c>0$ depends only upon $Q.$
Applying \eq{FV} with $f=d_Q^2$ and using \eq{SS} we find $d_Q^2=0$ on the support of the varifold $V,$ completing the proof.
\end{proof}
\begin{rem}
The idea of the proof is that the negative gradient flow of $d_Q^2$ decreases the area of $V.$ The computation above is more convenient for the rigorous treatment.
\end{rem}
As is well known, closed special Lagrangian currents are area-minimizing and in particular area-stationary \cite{HL}. Combining this with Proposition \ref{TW} we see immediately that the following holds:
\begin{cor}\l{cor: TW}
In the circumstances of Proposition \ref{TW} every compactly supported closed special Lagrangian current in $U$ is necessarily non-singular, of phase $0\in\R/2\pi\Z$ and supported on $Q.$
\qed
\end{cor}
\begin{exam}
Let $Y$ be either $T^*S^n,$ for $n>1,$ or $T^*\CP^{n}$ for arbitrary $n$, and let $Q$ denote its zero-section; then $Y$ has a Ricci-flat metric constructed respectively by Stenzel \cite{St} and Calabi \cite{Cal}. In these two cases the hypothesis of Proposition \ref{TW} holds and moreover its conclusion holds in the whole $Y$ rather than in a neighbourhood of $Q.$
\end{exam}

We turn now to the example which shows that $\pi_1Q$ strictly abelian is essential to the last part of Theorem \ref{thm: main} (ii).
We begin by proving a lemma in slightly more general circumstances.
\begin{lem}\l{lem: flat}
Let $n,k\ge0$ be integers. Take $Y:=T^n\t\R^k,$ $Q:=T^n\t\{0\}$ and give these the flat metrics.  
Then every compactly supported stationary integral $n$-varifold in $Y$ is parallel to $Q,$ that is, supported on $T^n\t A$ for some finite set $A\sb \R^k.$
\end{lem}
\begin{proof}
Let $V$ be a compactly supported stationary integral $n$-varifold in $Y.$
Applying \eq{FV} again with $v=\grad d_Q^2$ we get
$\int_Y\tr_{TV}\nb^2d_Q^2\,dV=0.$
But $\nb^2d_Q$ is pointwise non-negative and degenerates only in the direction of $T^n\t\{0\}$ so $V$-almost everywhere in $Y$ we have
$\tr_{TV}\nb^2d_Q^2=0$ and $TV$ is parallel to $T^n\t\{0\}.$

We recall a lemma of Simon \cite[Lemma 19.5]{Sm}.
Let $x$ be a point on the support of $V$ and we work near this so we can regard $x$ as a point of the universal cover $\R^n\times\R^k$ of $T^n\times\R^k.$
Let $\ep>0$ be so small that for every point $y$ of distance $<\ep$ from $x$ we can define $y-x\in\R^n\t\R^k.$
Fix $\de\in(0,1)$ and let $y$ be a point on the support of $V$ of distance $\ep\de/4$ from $x$ and such that $y-x$ is {\it not} parallel to $\R^n\t\{0\}.$
Simon \cite[Lemma 19.5]{Sm} proves then that 
\e\l{pq}\Th_V(x)+\Th_V(y)\le(1-\de)^{-n}\ep^{-n}\om_n^{-1}V(B_\ep),\e
where $\om_n$ is the Euclidean volume of the unit ball in $\R^n,$ $\Th_V$ the density of $V,$ and $B_\ep$ the ball of radius $\ep$ about $x.$
The original formula has in fact another term on the right-hand side, which contains the integral over $z\in B_\ep$ of $\|p_{\R^n\times\{0\}}-p_z\|$ where $p_{\R^n\times\{0\}},p_z$ are the projections of $\R^n\times\R^k$ onto $\R^n\times\{0\},T_zV$ respectively. But $T_zV$ is parallel to $\R^n\t\{0\}$ so the additional term vanishes and \eq{pq} holds.  

As $\ep,\de$ tend to $0$ the right-hand side of \eq{pq} tends to $\Th_V(x)$ and in particular becomes less than $\Th_V(x)+1.$ But then by \eq{pq} we have $\Th_V(y)<1,$ which is impossible. So the hypotheses above fail; that is, if $y$ is a point of the support of $V$ close enough to $x$ then $y-x$ is parallel to $\R^n\t\{0\}.$ This completes the proof. 
\end{proof}
\begin{rem}
The hypothesis of Proposition \ref{TW} is stable under small perturbations of the Ricci-flat K\"ahler metric (just because positivity is an open condition). On the other hand, Lemma \ref{lem: flat} will fail under perturbations of the ambient metric.
For instance, Kapouleas and Yang \cite{KY} construct a sequence of minimal surfaces converging to the flat Clifford torus $T^2\subset S^3$ with multiplicity two;
and a suitable rescaling of the ambient metric converges to the flat metric on the tubular neighbourhood of $T^2\subset S^3.$
\end{rem}
\begin{cor}\l{flat}
Let $Y$ be a flat Calabi--Yau manifold, $Q\subset Y$ a closed embedded totally geodesic special Lagrangian, and $U\sb Y$ a tubular neighbourhood of $Q.$
Then every compactly-supported closed special Lagrangian current in $U$ is parallel to $Q,$ that is, a locally constant multi-valued graph over $Q.$ Every such graph is a disjoint union of single-valued graphs if and only if $\pi_1Q$ is abelian.
\end{cor}
\begin{proof}
Since $Q$ is a closed flat manifold we get according to Bieberbach a finite Galois cover $f:T^n\to Q$
such that $\pi_1T^n$ is a maximal abelian subgroup of $\pi_1Q.$
Denote again by the same $f$ the finite cover $T^*T^n\to T^*Q$ between the cotangent bundles.
Identify $U$ with a neighbourhood of the zero-section $Q\subset T^*Q,$ and the pull-back $f^*U$ with a neighbourhood of $T^n\subset T^*T^n.$
As $T^*T^n$ is a product bundle we can use Lemma \ref{lem: flat} to show that the pull-back of a compactly supported closed special Lagrangian current $L$ is parallel; and accordingly, so is the original $L.$
Now if $\pi_1Q$ is abelian, this is the maximal abelian subgroup of itself and $Q=T^n;$ so every special Lagrangian is a disjoint union of single-valued graphs.
If $\pi_1Q$ is non-abelian, there exist a non-trivial covering transformation $\ga:T^n\to T^n$ and accordingly an element $\ga\in\pi_1Q\-\pi_1T^n$ which acts non-trivially upon $H^1(T^n,\R).$ So there exists a constant $1$-form $\al$ on $T^n$ that does change under $\ga:T^n\to T^n.$
By rescaling $\al$ we can suppose that $\al$ is so small that we can define its graph in $f^*U.$ The 
 push-forward of this defines then a truly multi-valued graph in $U.$
\end{proof}

\begin{exam}
Let $Y=U=T^*Q$ with $Q$ a Klein bottle. Write $Q$ as the mapping torus of 
the automorphism $S^1\to S^1$ given by $x\mapsto-x$ modulo $\Z$ for $x\in S^1=\R/\Z.$
Then there are corresponding double covers $T^2\to Q$ and $T^*T^2\to T^*Q,$ under which the graph of $dx$ is pushed forward to a multi-valued graph. Strictly speaking, $Y$ is not Calabi--Yau and $Q$ is not even orientable.
But $Q$ is still a special Lagrangian in the extended sense as in Definition \ref{dfn:Lag Z-grading}; see also Remark \ref{rem: Lag grading}. The multi-valued graph is also a special Lagrangian in the same sense. 
It is easy to find multi-valued graphs which are ordinary special Lagrangians. For instance, take $Q$ to be the mapping torus of the {\it orientation-preserving} diffeomorphism
$T^2\to T^2$ given by $(x,y)\mapsto(-x,-y)$ where $(x,y)\in T^2=\R^2/\Z^2;$ then the graph of $dx$ is pushed forward to a multi-valued graph which is an ordinary special Lagrangian.
\end{exam}

\section{Branched Examples}\l{sec: branch}
Here are examples of {\it branched} $C^0$ nearby closed Lagrangians, which will all be Lagrangian surgeries. We begin by recalling the result of Fukaya, Oh, Ohta and Ono \cite[Chapter 10]{FOOO} which describes the effect of applying Lagrangian surgeries to pairs of objects in the Fukaya category.
\begin{thm}[cf.\ {\cite[Chapter 10]{FOOO}}]\l{FOOO10}
Let $X$ be a compact $\Z$-graded symplectic manifold with contact boundary and $L,L'\subset X$ two closed embedded $\Z$-graded Lagrangians which intersect each other transversely at least at one point. Let the Maslov index function $\mu_{LL'}:L\t _XL'\to \Z$ be identically equal to $1$ so that we can define a $\Z$-graded Lagrangian surgery $L\#L'$ {\rm\cite[Lemma 2.14]{Seid1}.} Then for any $\b,\b'\in \obj\cF(X)$ supported on $L,L'$ respectively, there exist an object $\b''\in\obj\cF(X)$ supported on $L\#L'$ and for every other ${\bf a}\in \obj\cF(X)$ a long exact sequence
\e\l{FOOOex}
\cdots\to HF^*({\bf a},\b')\to HF^*({\bf a},\b'')\to HF^*({\bf a},\b)\to\cdots\e
where the connecting homomorphism $HF^*({\bf a},\b)\to HF^{*+1}({\bf a},\b')$ is the right multiplication by some element of $HF^1(\b,\b').$
\qed
\end{thm}
\begin{rem}
We allow $L,L'$ to have two or more intersection points, which is possible as Fukaya, Oh, Ohta and Ono mention \cite[after Theorem 55.7]{FOOO} although they deal only with single intersection points in their formal treatment. So there are many objects $\b''$ of the form above according to different scales of surgeries at different intersection points.

Here are remarks on the proof. Fukaya, Oh, Ohta and Ono use the cobordism method for the moduli spaces of holomorphic discs with boundary on the surgery; the cobordism is associated to stretching the neck (degenerating the almost complex structure) near the intersection points.
The method of Biran--Cornea \cite{BC} provides an easier proof which does not rely on a degeneration argument: one can associate to the surgery a  Lagrangian cobordism in $X \times \C$, and the count of holomorphic triangles with boundary on this cobordism yields the desired long exact sequence. Although their results are stated for monotone or exact Lagrangians, these assumptions are imposed only to avoid using the theory of virtual chains in case the Lagrangians bound holomorphic discs. We shall in practice take $L,L'$ to be graphical Lagrangians in a cotangent bundle, which do not bound any non-constant holomorphic discs; so their proof can be implemented with no difficulty.
\end{rem}

For special Lagrangians we have
\begin{cor}\l{cor: non-special}
In the circumstances of Theorem \ref{FOOO10} let $X$ be a Calabi--Yau manifold and $L,L'$ both special Lagrangians. Then $\b''\in \obj \cF(X)$ is not isomorphic in $H^0\cF(X)$ to any other object supported on a closed special Lagrangian of any phase.
\end{cor}
\begin{proof}
By shifting $L,L'$ we may suppose that $L$ has phase $2k \pi$ for some $k\in\Z$ and that $L'$ has phase $0.$
Then by \eq{mu} we have $-k\in(1-n,1)$ so $k\ge0.$
Applying \eq{FOOOex} to $\b,\b'$ in place of ${\bf a}$ we get two exact sequences
\ea HF^n(\b,\b'')\to HF^n(\b,\b)\to HF^{n+1}(\b,\b')=0,\\
0=HF^{-1}(\b',\b)\to HF^0(\b',\b')\to HF^0(\b',\b'').\ea
On the other hand $HF^n(\b,\b)$ and $HF^0(\b',\b')$ are both non-zero, so
$HF^n(\b,\b'')$ and $HF^0(\b',\b'')$ are both non-zero.
Suppose contrary to our conclusion that $\b''$ is isomorphic to an object ${\bf a}\in\cF(X)$ supported on a closed special Lagrangian $L''$ of phase $\th\in\R.$
By perturbing $L''$ we may suppose that ${\bf a}$ is supported on a nearly special Lagrangian $L^*$ which intersect $L$ and $L'$ generically.
Since $HF^n(\b,{\bf a})\cong HF^n(\b,\b'')\ne0$ we get an intersection point $x\in L\cap L^*$ with Maslov index $n.$
Hence denoting by $\ph:L^*\to\R$ the phase function and using \eq{mu} we find
$-\ph(x)/2\pi\in(0,n)$ and in particular $\ph(x)<0.$
Also since $HF^0(\b',{\bf a})\ne0$ we get an intersection point $x'\in L'\cap L^*$ with
$k-\ph(x')/2\pi\in (-n,0)$ and in particular $\ph(x')>k\ge0.$
But both $\ph(x)$ and $\ph(x')$ are near $\th$ so $k=\th=0;$ that is, $L,$ $L'$ and $L''$ have all phase $0.$
Since $HF^n(\b,{\bf a})$ and $HF^0(\b',{\bf a})$ are both non-zero it follows by Proposition \ref{prop: TY} (i)
that these three Lagrangians have a common irreducible component.
But this is impossible because $L,L'$ intersect each other transversely at least at one point. 
\end{proof}

We recall now a key fact about {\it Morse} $1$-forms; 
a Morse $1$-form on a manifold $Q$ is a closed $1$-form $\al$ on $Q$ such that every point of $Q$ has a neighbourhood on which $\al=df$ for some Morse function $f.$
\begin{thm}[{Honda \cite{Hn}}]\l{thm: Honda}
Let $Q$ be a closed manifold of dimension $n,$ and $\al$ a Morse $1$-form on $Q$ with no critical point of index $0$ or $n.$
Then there exists on $Q$ a Riemannian metric $g,$ and in the de Rham class $[\al]\in H^1(Q,\R),$ a $g$-harmonic Morse $1$-form $\be$ which has the same distribution of indices as $\al$ has;
that is, for each $i\in\{0,\cdots, n\}$ the number of index-$i$ critical points of $\be$ is equal to that of $\al.$
\qed
\end{thm}
\begin{rem}
If $Q$ is equipped with any real analytic structure, we can suppose that $g$ and $\be$ above are real analytic. To see this, perturb the metric from Honda's theorem to a real analytic Riemannian metric, which we denote still by the same $g$. Choosing the perturbation to be sufficiently $C^\iy$ small, there exists a unique $g$-harmonic $1$-form near $\be$ in the same de Rham class, which we denote again by the same $\be.$ This $\be$ is analytic by elliptic regularity.
\end{rem}

We make now the definition of standard Calabi--Yau structures in cotangent bundles.
\begin{dfn} \label{dfn: standard CY}
Let $Q$ be a closed connected orientable Riemannian manifold. Then a {\it standard} Calabi--Yau structure near the zero-section $Q\subset T^*Q$ is a pair consisting of a compatible (integrable) complex structure $J$ and a nowhere-vanishing $J$-holomorphic $(n,0)$ form relative to which $Q$ is a special Lagrangian. When $Q$ is oriented, we require this to agree with that induced by $\Om|_Q.$
\end{dfn}
We show that standard Calabi--Yau structures exist as follows.
\begin{lem}\l{GS}
Let $(Q,g)$ be a closed oriented real analytic Riemannian manifold.
Then there exist a disc sub-bundle $X\sb T^*Q$ and a standard Calabi--Yau structure on $X$ such that the restriction of the K\"ahler metric to $Q$ is equal to $g,$ and the complex structure is compatible with the Liouville structure of $X$ $($in the sense of Definition $\ref{dfn:Liouv}).$
\qed
\end{lem}
\begin{rem}
The real analytic structure induced from the complex structure above is possibly different from that induced from the real analytic structures of $Q$ and $T^*Q.$
\end{rem}
\begin{proof}[Proof of Lemma $\ref{GS}$]
Guillemin and Stenzel \cite[\S5]{GS} prove this except the part which has to do with $\Om;$ that is, there exist a sufficiently small disc sub-bundle $X\sb T^*Q$ and a complex structure $J$ on $X$ compatible with the Liouville structure of $X$ and such that the restriction to $Q$ of the corresponding K\"ahler metric is equal to $g.$
Making $X$ smaller if we need, extend $g$ to a holomorphic section of $\sym^2T^*X$ which is locally of the form $g_{ab}(z^1,\dots,z^n)dz^adz^b$ where $z^1,\dots,z^n$ are local holomorphic co-ordinates with which $Q$ is defined by $\Im z^1=\dots=\Im z^n=0.$ This is possible because every $g_{ab}$ is real analytic.
Since $Q$ is oriented we get, making $X$ yet smaller if we need, a holomorphic volume form on $X$ which is locally of the form $\sqrt{\det g_{ab}}dz^1\wedge\dots\wedge dz^n$ and whose restriction to $Q$ agrees with the volume form of $(Q,g).$ This completes the proof.
\end{proof}
\begin{rem}
Guillemin and Stenzel \cite[\S5]{GS} do not suppose that $Q$ is oriented.
They show also that $J$ is invariant under the involution $\si:T^*Q\to T^*Q$ defined by $p\mapsto -p$ and that $J$ is the unique complex structure satisfying this and the other conditions in the lemma.
\end{rem}

We give now an example of {\it non-special} Lagrangian surgeries.
\begin{thm}\l{Ea}
Let $Q$ be a closed connected oriented manifold of $n,$ and $\al$ a Morse $1$-form on $Q$ whose critical points have the sof dimension $n$ $($which must be $\frac n2$ by the Poincar\'e duality of Morse--Novikov homology$).$ Then there exist a compact disc sub-bundle $X\subset T^*Q,$ a closed embedded branched $\Z$-graded Lagrangian $L\subset X,$ and an object of $H\cF(X)$ supported on $L$ that is not isomorphic to any other object supported on a closed special Lagrangian (of any phase
relative to any standard Calabi--Yau structure).
\end{thm}
\begin{proof}
Applying Theorem \ref{thm: Honda} to $\al$ we get a harmonic $1$-form $\be$ with the same distributions of critical points as $\al$ has.
Applying McLean's theorem to $\be$ we get a $C^1$ nearby special Lagrangian $Q'$ such that the pair $(Q',Q)$ has the same distribution of indices as $\al$ has.
Consider now $Q'[\frac n2-1]\#Q$ as in Theorem \ref{FOOO10}.
Corollary \ref{cor: non-special} implies then that this cannot be represented by a special Lagrangian, completing the proof.
\end{proof}
\begin{exam}\l{ex: non-special}
Let $n\ge 4$ be an even number and take $Q$ to be the self-connected sum of $S^{n/2}\t S^{n/2};$ that is, $Q$ is obtained from $S^{n/2}\t S^{n/2}$ by eliminating two points of it and attaching the handle between them.
Take a Morse function $S^{n/2}\t S^{n/2}\to\R$ with one critical point of index $0,$ two of index $\frac n2$ and one of index $n.$ Performing the connected sum construction at the critical point of index $0$ and that of index $n,$ we obtain a $1$-form $\al$ on $Q$ satisfying the hypothesis of the theorem above. There is accordingly a branched nearby Lagrangian $Q'[\frac n2-1]\#Q$ with $HF^*$ unobstructed. 
On the other hand, as $n\ge 4$ there is an isomorphism $\pi_1Q\cong\Z$ so the hypotheses of Theorem \ref{thm: main} (ii) holds except the special Lagrangian condition.

Let $\b$ be an object of the Fukaya category supported on $Q'[\frac n2-1]\#Q$ and let $q\in Q$ be any point. Then the Floer cochain group $CF^*(T_q^*Q,\b)$ is non-zero in degrees $\frac n2-1$ and $0.$ As $n\ge4$ these two degrees differ at least by two and no differentials exist between them. So the cohomology group $HF^*(T_q^*Q,\b)$ is non-zero in two different degrees. This shows that Lagrangians being special is essential to Lemma \ref{lem: single degree}.
\end{exam}

We give next an example of {\it special} Lagrangian surgeries.
\begin{thm}\l{E}
Let $n\ge3$ and let $Q$ have a  Morse $1$-form $\al$ with at least one critical point of index $1,$
at least one of index $n-1$ and none of index $0$ or $n.$
Then near $Q\subset T^*Q$ there exist a standard Calabi--Yau structure and relative to it a closed branched special Lagrangian.
\end{thm}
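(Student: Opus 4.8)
The plan is to build the special Lagrangian as a Lagrangian surgery (of the two-type kind) on a configuration obtained from the Morse $1$-form $\al$, and then to make the surgery special by carefully choosing the ambient Calabi--Yau data. First I would recall the graphical Lagrangians $\Ga_0 = Q$ (the zero section) and $\Ga_\al$ = graph of $\al$ inside $T^*Q$; since $\al$ is closed, $\Ga_\al$ is Lagrangian for the canonical symplectic form, and since $\al$ is Morse, $\Ga_0$ and $\Ga_\al$ intersect transversally exactly at the critical points of $\al$, with the local intersection index determined by the Morse index. The hypothesis that there are no critical points of index $0$ or $n$ means that at every intersection point both ``branches'' of the local model are genuinely mixed (no point looks like two parallel sheets), so that a surgery there is forced to connect the two components; the presence of an index-$1$ point and an index-$(n-1)$ point will be used to control the phase/Maslov bookkeeping so that the resulting surgered Lagrangian can be arranged to have a consistent grading and vanishing Maslov class.

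Second, I would perform the Lagrangian surgery at all the intersection points, choosing at each point the surgery ``type'' (handle attached from one side or the other) so as to produce a connected closed embedded Lagrangian $L$ that is branched over $Q$ — branched precisely because the two sheets $\Ga_0$ and $\Ga_\al$ get glued along small handles, so the projection $L \to Q$ ceases to be a covering near the handles. Here I would use the local surgery model of Polterovich, in the refined form of Fukaya--Oh--Ohta--Ono \cite[Chapter 10]{FOOO}, which already appeared in the proof of Theorem \ref{Ea}; the new feature is that we are allowed two types, and the index-$1$ and index-$(n-1)$ hypotheses guarantee the combinatorics of types can be chosen consistently (this is where $n \ge 3$ enters, so that the handle $S^{n-1} \times (-1,1)$ is connected and the surgery neck does not disconnect anything).

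Third — and this is the main point — I would construct the standard Calabi--Yau structure near $Q \subset T^*Q$ relative to which $L$ is special. The idea is that the special condition is a first-order (phase) condition plus the Lagrangian condition, so it suffices to (a) note that away from the surgery necks $L$ agrees with $\Ga_0$ and $\Ga_\al$, both of which are special for the flat/standard structure up to a controlled phase, and (b) modify $J$ and $\Om$ supported in disjoint small balls around the critical points so that the phase function $\th_L$ becomes identically $0 \bmod 2\pi\Z$ on the necks. Since each surgery neck is contained in a small coordinate ball where everything is essentially linear algebra in $\C^n$, one can solve the relevant pointwise equations: the local surgery handle in $\C^n$ between two transverse special Lagrangian planes of appropriate phases is itself special for a perturbed holomorphic volume form (this is the classical picture behind the Lawlor neck / special Lagrangian surgery). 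The compatibility of $J$ with a K\"ahler form is preserved because the perturbation is $C^0$-small and compactly supported. Patching these local modifications together with the unperturbed structure outside gives the desired global standard Calabi--Yau structure, and $L$ is special for it by construction.

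The hard part will be step three: arranging simultaneously that (i) the perturbed $J$ stays compatible with a genuine K\"ahler form near $Q$ (not merely an almost-K\"ahler one), (ii) the perturbed holomorphic volume form remains closed and nowhere vanishing, and (iii) the phase of $L$ is made exactly $0$ on the necks while the phases of $\Ga_0$ and $\Ga_\al$ on the rest of $L$ are compatible with this — which is exactly where we need the index $1$ and index $n-1$ critical points and the absence of index $0,n$ ones, to make the Maslov/phase constraints solvable. I would handle (i)--(ii) by working in Darboux--Weinstein coordinates and perturbing only the ``rotation'' part of the data inside disjoint balls, keeping the K\"ahler potential explicit; and I would handle (iii) by a dimension count on the space of phases of the handle, showing the required phase $0$ is attainable precisely under the stated index hypotheses.
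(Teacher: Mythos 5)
Your proposal has the right starting configuration ($Q$ together with the graph of $\al$, surgered at the critical points), but the core of your argument --- step three, where you try to make an arbitrary surgery special by locally perturbing $(J,\Om)$ near the necks --- does not work, and it inverts the logic of the actual construction. Two concrete problems. First, you never arrange for the two sheets themselves to be special: the graph of a closed $1$-form is special (to first order) only when the form is harmonic, so before any surgery one must invoke Honda's theorem to replace $\al$ by a harmonic Morse $1$-form $\be$ in the same de Rham class with the same index distribution, choose the standard Calabi--Yau structure so that its K\"ahler metric restricts to the corresponding metric $g$ on $Q$, and then apply McLean's theorem to correct $\gr\be$ to an honest special Lagrangian $Q'$. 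No compactly supported modification of $(J,\Om)$ in small balls around the critical points can repair the failure of specialness of $\Ga_\al$ away from those balls. Second, a special Lagrangian handle between two transverse special Lagrangian planes of the same phase exists only when the angles $\th_1,\dots,\th_n$ sum to $\pi$, i.e.\ at intersection points of Maslov index $1$ (equivalently $n-1$ from the other side); this is the Lawlor neck. At critical points of intermediate index $2,\dots,n-2$ there is no such handle, and no perturbation of the holomorphic volume form keeping it closed, nonvanishing and compatible with a K\"ahler structure can create one --- indeed Theorem \ref{Ea} (all critical points of index $n/2$) produces a surgery that is provably \emph{not} isomorphic to anything supported near a special Lagrangian, which refutes your strategy of "making the neck special by a local phase adjustment". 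This is also why your claim that $L$ is embedded is too strong: only the index-$1$/index-$(n-1)$ pairs can be removed by special Lagrangian surgery, and the remaining critical points must survive as transverse self-intersection points, so $L$ is in general only immersed.

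The correct route, once $Q$ and $Q'$ are both special for a fixed real analytic standard Calabi--Yau structure, is to apply a genuine gluing theorem --- Joyce's desingularisation of special Lagrangians with conical singularities \cite[Theorem 9.7]{J5} --- which inserts Lawlor necks at the index-$1$/index-$(n-1)$ pairs and solves the special Lagrangian equation globally near the resulting configuration. The gluing is a global nonlinear PDE argument with obstruction control, not a patching of local pointwise modifications of the ambient data; the ambient Calabi--Yau structure is fixed once and for all at the start, rather than adjusted at the end to fit the Lagrangian.
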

\begin{proof}
Following the proof of Theorem \ref{Ea} we get a $C^1$ nearby special Lagrangian $Q'$ such that the Maslov indices of the pair $(Q',Q)$ has the same distributions of the Morse index of $\al.$
So $(Q',Q)$ has at least one pair intersection points, one of index $1$ and the other of index $n-1,$ which thus satisfies the hypothesis of Joyce \cite[Theorem 9.7]{J5} (one side of the equation (62) in that theorem corresponds to index $1$ critical points, and the other side of the same equation corresponds to index $n-1$ critical points).
We can define then a special Lagrangian surgery $Q'\#Q$ as we want.
\end{proof}

The branched special Lagrangian in the preceding theorem is possibly immersed; any pairs of critical points of $\al$ of indices $1$ and $n-1$ may be removed by surgery but the other critical points become transverse double points.

\begin{exam}\l{higher-dim ex}
Take $Q$ to be the selfconnected sum of $S^1\t S^{n-1};$ then $Q$ has a Morse $1$-form with exactly two critical points, one of index $1$ and the other of index $n-1.$
So with $n\ge3$ we can apply Theorem \ref{E} and if $n\ge4$ we have $b_2(L)=0$ so that $L$ is $HF^*$-unobstructed.
\end{exam}
But there are also {\it $HF^*$-obstructed} special Lagrangian surgeries:
\begin{cor}\l{O}
Let $(Q,g)$ be a compact connected real analytic Riemannian manifold of dimension $n\ge3$ whose fundamental group $\pi_1Q$ has no non-abelian free subgroup. Let $\al$ be a Morse $1$-form on $Q$ with at least one critical point of index $1,$ at least one of index $n-1$ and none of index $0$ or $n.$ Let $X\sb T^*Q$ be a disc sub-bundle equipped with a standard Calabi--Yau structure $(J,\Om)$ as in Lemma $\ref{GS}.$
\iz
\item[\bf(i)]
If $n=3$ then there exists in $(X;J,\Om)$ a closed embedded $HF^*$-obstructed special Lagrangian which has $HF^*$ obstructed for every Novikov field of characteristic $0.$ Moreover, if $\pi_1Q$ is virtually solvable then the same Lagrangian has $HF^*$ obstructed for every Novikov field.
\item[\bf(ii)] If $n\ge4$ then there exists in $(X;J,\Om)$ a closed generically-immersed special Lagrangian which has $HF^*$ obstructed for every Novikov field of characteristic $0.$ Moreover, if $\pi_1Q$ is virtually solvable then the same Lagrangian has $HF^*$ obstructed for every Novikov field.
\iz
\end{cor}
\begin{proof}
For (i) recall from Poincar\'e--Hopf's theorem that the numbers of critical points of indices $1$ and $2$ are the same. So by Theorem \ref{E} there are branched embedded special Lagrangians. Note that we can re-scale $\al$ to make these special Lagrangians as close to $Q$ as we like. We can then use Theorem \ref{thm: main} (iii),(iv) to show that they have $HF^*$ obstructed in such a way as stated above. 
Part (ii) may be proved in the same way.
\end{proof}

\begin{exam}\l{ob ex}
For (i) take $Q=S^1\t S^2$ and regard it as the selfconnected sum of $S^3.$
According to Milnor \cite[Lemma 8.2]{Miln} for every Morse function we can produce, by changing it locally, two additional critical points of any adjacent indices.
Applying this to the height function of $S^3$ we get a Morse function with exactly one critical point of index $0,$ exactly one critical point of index $3$ and arbitrarily many pairs of critical points of indices $1$ and $2.$
So $Q$ has such a Morse $1$-form as in (i) above.

In the same way, on $Q=S^1\t S^{n-1}$ with $n\ge4$ there is such a Morse $1$-form as in (ii) above with arbitrarily many pairs of critical points of adjacent indices $1,2$ or $n-2,n-1.$
\end{exam}

Corollary \ref{O} implies a statement without Lagrangians in it and perhaps of interest in itself:
\begin{cor}\l{O2}
Let $Q$ be a closed connected orientable manifold of dimension $n$ with $\pi_1Q$ having no non-abelian free subgroup. 
Let $\al$ be a Morse $1$-form on $Q$ with no critical points of index $0$ or $n.$ Then
\iz
\item[\bf(i)]
if $n=2$ we have $Q\cong T^2$ and $\al$ has no critical point at all.
\iz
Suppose now that $n\ge3$ and $\al$ has at least one critical point of index $1$ and one critical point of index $n-1.$ Then
\iz
\item[\bf(ii)]
if $n\ge 4$ then either $b_2(Q)>0$ or $\al$ has some critical point of index $2$ or $n-2.$
\iz
\end{cor}

\begin{proof}
We first prove (i).
Since $\al$ has no critical points of index $0$ or $n$ it follows that $\al$ is not exact, so $b_1(Q)>0.$
If $n=2$ this condition and the hypothesis on $\pi_1Q$ imply $Q\cong T^2$, but since the Euler characteristic of $T^2$ vanishes, every Morse $1$-form on $T^2$ with no critical point of index $0$ or $2$ has no critical point at all, as in (i).

We then prove (ii). 
Since the immersed Lagrangian $L$ in Corollary \ref{O} (ii) has $HF^*$ obstructed,
it follows that either $b_2(L)>0$ or $L$ has some self-intersection points of index $2$ or $n-2.$
So $b_2(Q)>0$ (because $n\ge 4$) or $\al$ has some critical points of index $2$ or $n-2.$
\end{proof}

We apply the same method to $n=3$ and prove 
\begin{prop}\l{prop: n=3}
Let $Q$ be a closed connected orientable manifold of dimension $3$ with $\pi_1Q$ virtually solvable. Let $\al$ be a Morse $1$-form on $Q$ with no critical points of index $0$ or $3.$ Then there exists a smooth function $f:Q\to\R$ such that $\al-df$ has no critical points at all.
\end{prop}
\begin{proof}
Let $x,y$ be two critical points of $\al,$ one of index $1$ and the other of index $2;$ such a pair exists by Poincar\'e duality of Novikov homology. As in Corollary \ref{O} (i) there is a double point surgery $L$ with $HF^*$ obstructed.
In dimension $n=3$ the bounding cochains on $L$ are critical points of the disc potential function. The relevant homology classes come from those of strips between $x,y$ with the right orientation.
For this we use again Fukaya, Oh, Ohta Ono's result \cite[Chapter 10]{FOOO}; the number of holomorphic discs homologous to this strip with multiplicity one is conserved under the surgery. 
We prove that the number $\nu$ of holomorphic discs homologous to the multiplicity-one strip is exactly one.
This $\nu$ is an integer because $T^*Q$ is an exact symplectic manifold and we can therefore use the result of Fukaya, Oh, Ohta and Ono \cite{FOOO-Z}.
We can also work with the Novikov field of any characteristic $p$ so the Lagrangian $L$ has $HF^*$ obstructed in characteristic $p;$ and in particular, $\nu$ is non-zero modulo $p.$  But $p$ is any, so $\nu=1.$
Denote by $Q^\al$ the graph of $\al.$
As the Floer complex of the pair $(Q^\al,Q)$ and the Novikov complex of $\al$ are quasi-isomorphic, the number of gradient trajectories from $y$ to $x$ is also one. 
We can then use the cancellation theorem in Morse theory \cite[Theorem 5.4]{Miln}; that is, we can cancel out $x,y$ with each other by changing $\al$ in its de Rham cohomology class.
Repeating this process we get a Morse $1$-form with no critical points at all, as claimed. 
\end{proof}

The hypothesis in Proposition \ref{prop: n=3} that $\pi_1Q$ should be virtually solvable is stronger than that in Corollary \ref{O2}. It is therefore natural to ask whether the same conclusion holds under the same hypothesis on $\pi_1Q$ as in Corollary \ref{O2}. This is in fact true as we show now.
\begin{prop}\l{O3}
Let the hypothesis of Corollary $\ref{O2}$ hold with $n=3.$ Then there exists a smooth function $f:Q\to\R$ such that $\al-df$ has no critical points at all.
\end{prop}
For the proof we use the geometrisation theorem for $3$-manifolds. We begin with
\begin{lem}\l{lem: 3-manifold}
Every closed orientable $3$-manifold $Q$ satisfies one of the following five statements:
{\bf(i)} $\pi_1Q$ is finite;
{\bf(ii)} $Q\cong S^1\t S^2;$
{\bf(iii)} $Q\cong \R P^3\# \R P^3;$
{\bf(iv)} there exists a finite cover $P\to Q$ where $P$ is the mapping torus of an orientation-preserving diffeomorphism $T^2\to T^2;$ or
{\bf(v)} $\pi_1Q$ has a non-abelian free subgroup.
\end{lem}
\begin{proof}
We suppose that $\pi_1Q$ is infinite and prove that one of (ii)--(v) holds; we shall refer to the book \cite{AFW} for those results about $3$-manifolds which we use, without tracing back their origins in the literature. 
As $\pi_1Q$ is infinite, $Q$ has no Riemannian metric of positive constant sectional curvature, so $Q$ is not spherical \cite[\S1.7]{AFW}.
According to Theorem 1.11.1 of \cite{AFW}, either one of (ii)--(iv) above holds; or $\pi_1Q$ has no finite-index solvable subgroup. It thus remains to show that, if $Q$ is an infinite $3$-manifold group with no finite-index solvable subgroup, then it must have a non-abelian free subgroup.

The first step in this direction is to note that  \cite[p50, (C.6)]{AFW} asserts that our assumption on the fundamental group implies that  one of the following (a)--(d) holds:
(a) $Q$ is reducible \cite[p3]{AFW};
(b) $Q$ is a hyperbolic manifold and $\pi_1Q$ is a linear group over $\ov\Q$ \cite[p51, (C.7)]{AFW};
(c) $Q$ is a Seifert manifold and $\pi_1Q$ is a linear group over $\Z$ \cite[p52, (C.10) and (C.11)]{AFW}; or
(d) $Q$ is a Haken manifold with an incompressible torus and (v) holds \cite[p58, (C.24)]{AFW}. Moreover:
\iz
\item
In Case (a) it is well known that either (ii) holds or $Q$ is a connected sum of two closed $3$-manifolds $P_1,P_2$ neither of which is a sphere.
In the latter case $\pi_1P_1,\pi_1P_2$ are both non-trivial (because Poincar\'e's conjecture is true). If $\#\pi_1P_1=\#\pi_1P_2=2$ then $P_1\cong P_2\cong \RP^3$ and (iii) holds. Otherwise, $\pi_1Q\cong\pi_1P_1*\pi_1P_2$ has so many elements that (v) holds as follows: one of $\{\pi_1P_1,\pi_1P_2\}$ has at least two non-trivial elements $x,y$ and the other of $\{\pi_1P_1,\pi_1P_2\}$ has at least one non-trivial element $z$ so there is a non-abelian free group generated by $xz,yz.$
\item
In Cases (b) and (c) we use Theorem \ref{thm: Tits}.
Since we have supposed that $\pi_1Q$ has no finite-index solvable subgroup, it follows then that (v) holds in Cases (b) and (c). 
\iz
This completes the proof.
\end{proof}
Lemma \ref{lem: 3-manifold} implies 
\begin{cor}\l{cor: 3-manifolds}
Let $Q$ be a closed orientable $3$-manifold with $\pi_1Q$ having no non-abelian free subgroup.
Then every non-zero element of $H^1(Q,\R)$ may be represented by a nowhere-vanishing closed $1$-form on $Q.$
\end{cor}
\begin{proof}
By Lemma \ref{lem: 3-manifold} one of (i)--(iv) holds.
In Cases (i) and (iii) we have $H^1(Q,\R)=0$ and there is nothing to prove.
In Case (ii) we have $Q\cong S^1\t S^2$ and letting $x$ be the coordinate of $S^1$ we have $H^1(Q,\R)\cong\R[dx],$ which has certainly the desired property.
In Case (iv) the mapping torus $P$ is determined by the monodromy matrix $M\in \SL_2(\Z)$ acting upon $H^1(T^2,\Z)\cong\Z^2$ which we can classify by direct computation into the following three cases;
(a) there is an integer $k>0$ such that $M^k=\begin{pmatrix} 1& 0\\ 0& 1\end{pmatrix};$
(b) $M^2$ is conjugate in $\GL_2(\Z)$ to $\begin{pmatrix} 1& l\\ 0& 1\end{pmatrix}$ for some $l\in\Z\-\{0\};$ or
(c) $M$ has two distinct real eigenvalues, neither equal to $1.$
We study these respectively as follows:
\iz
\item
In Case (a) we can rechoose the finite cover $P\to Q$ so that $P=T^3,$
from which we get a natural embedding $H^1(P,\R)\sb H^1(T^3,\R)=\R[dx]\op\R[dy]\op\R[dz]$ where $x,y,z$ are the three coordinates of $T^3=S^1\t S^1\t S^1.$ This expression implies the desired property of $H^1(P,\R).$
\item
In Case (b) we can rechoose the finite cover $P\to Q$ so that $M$ will have two eigenvalues both equal to one.
Trivialising $P$ over two intervals in $S^1$ and applying to them the Mayer--Vietoris sequence, we get an isomorphism
\e\l{mapping torus} H^1(P,\R)/\R[dx]\to H^1(T^2,\R)^M\e
where $[dx]$ is obtained from the projection $x:P\to S^1$ and the right-hand side is the $M$-invariant subspace of $H^1(T^2,\R).$
As $M$ has two eigenvalues both equal to one, there is a coordinate $y:T^2\to S^1$ such that $dy$ is invariant under $M$ and lifts to $P.$
Then by \eq{mapping torus} we have $H^1(P,\R)=\R[dx]\op\R[dy],$ in which $H^1(Q,\R)$ has certainly the desired property.
\item
In Case (c) there is again an isomorphism of the form \eq{mapping torus} but the right-hand side vanishes because $M$ has no eigenvalue equal to one. 
So $H^1(P,\R)=\R[dx],$ in which $H^1(Q,\R)$ has certainly the desired property. 
\iz
This completes the proof.
\end{proof}
Corollary \ref{cor: 3-manifolds} implies that Proposition \ref{O3} holds as we have claimed.

\appendix

\section{Viterbo Restriction Functors}

The purpose of this appendix is to prove Proposition \ref{prop: restriction compatible}. The strategy for the proof is essentially the same as the construction of the Viterbo functor from  \cite{AS}, and we shall see that there are essentially no new ideas. However, since \cite{AS} only involves exact Lagrangians, we go through the process of explaining how the energy estimates still apply to prove well-definedness of operations when allowing one of the Lagrangians to be non-exact, as long as it lies entirely inside the target of the restriction map. We discuss the energy estimates in a very general context, then explain the algebraic arguments required to reach the desired conclusions, which includes in particular recalling the formulation of the wrapped Fukaya category using linear Hamiltonians, as this will allows us to more directly use the existing construction of the Viterbo functor.

\subsection{Topological and Geometric energies}
\label{sec:topol-geom-energ}

In the definition of wrapped Floer homology using linear Hamiltonians, Floer complexes are defined to have generators given by \emph{Hamiltonian chords}:
\begin{dfn}
Let $(X,\la)$ be a Liouville domain and $H:X\to[0,\infty)$ a $C^\iy$ function. Denote by $v_H$ its Hamiltonian vector field on $X$ and define an {\it $H$-chord} in $X$ to be a smooth map $x=x(t):[0,1]\to X$ with $\frac{dx}{dt}=v_H\cm x,$ $x(0)\in L_0$ and $x(1)\in L_1$ where $L_0,L_1$ are some Lagrangians in $X.$
\end{dfn}
The flow of $H$ gives a bijection between $H$-chords with endpoitns $(L_0,L_1)$ and intersection points between the image of $L_0$ under the time-$1$ flow of $H$ and $L_1$. It is possible to then recast all definitions in terms of such intersection points, but we shall not do so to keep consistency with  \cite{AS}.

The differential in Floer complexes in this setting is defined to be the count of solutions to the Floer equation
\begin{equation} 
  (du - v_H\circ u \otimes dt)^{0,1} = 0 ,
\end{equation}
on a strip $\mathbb{R} \times [0,1]$, with interval coordinates $t$, boundary conditions $L_0$ and $L_1$, and which have finite energy in the sense that
\begin{equation}
  \int |du - v_H \otimes dt|^2 < \infty.  
\end{equation}
An application of Stokes's theorem shows that this expression depends only on the homotopy class of the strip, and a second application shows that, in the case of exact Lagrangian it in fact depends only on the input and output. 

To define operations on Floer complexes generated by Hamiltonian chords, one removes boundary marked points of Riemann surfaces (which would be asked to map to intersection points between Lagrangians in the framework of Section \ref{dfn:Liouv}), and one counts solutions to a specific class of inhomogeneous Cauchy-Riemann equations, which in a neighbourhood of the punctures agree with the Floer equation. To be more precise, we need some terminology:   for $k\in\{0,1,2,\dots\}$ a {\it punctured disc with $k+1$ ends} is a non-compact bordered Riemann surface $\Si$ isomorphic to the closed unit disc in $\C$ minus $k+1$ distinct boundary points indexed by $\{0,\dots,k+1\}$ in the counter-clockwise order. In particular, $\Si$ is given a complex structure and hence orientation; and we index the $k+1$ boundary arcs also by $\{0,\dots,k+1\}$ in a counter-clockwise order, which we fix by saying that the $0^{\rm th}$ boundary arc is between the $0^{\rm th}$ and $1^{\rm st}$ boundary points. 

Note that there exist a holomorphic embedding $(-\iy,0]\times[0,1]\to \Si$ which parametrizes the $0^{\rm th}$ end of $\Si,$ and for $a=1,\dots,k,$ a holomorphic embedding $[0,\iy)\times[0,1]\to \Si$ which parametrizes the $a^{\rm th}$ end of $\Si.$ We make a universal choice of these that is compatible with isomorphisms, deformations and gluings of punctured discs \cite[\S2.2]{AS}. Denote by $s:(-\iy,0]\times[0,1]\to(-\iy,0]$ the first co-ordinate and $t:(-\iy,0]\times[0,1]\to [0,1]$ the second co-ordinate, which we call the $(s,t)$ co-ordinates. Define in the same way the $(s,t)$ co-ordinates of $[0,\iy)\times[0,1].$ 

We now arrive at the key additional datum which we must choose:
\begin{dfn}
 
Let $\Si$ be a punctured disc with $k+1$ ends. We say that a $1$-form $\chi$ on $\Si$ is {\it sub-closed} if $d\chi\le0$ with respect to the orientation of $\Si;$ that is, $d\chi=f\om_\Si$ where $f$ is some $0$-form $\Si\to(-\iy,0]$ and $\om_\Si$ an orientation $2$-form of $\Si.$  We assume as well that $\chi$ agrees,  along the strip-like end labelled by an integer $a \in \{0, \ldots, k\} $ , with a non-negative multiple of the $1$-form $dt$.
\end{dfn}
Most of the discussion in this Appendix can be performed using closed forms. The generality of subclosed forms is required for continuation elements (quasi-units) that enter in the choice of localisation.

The definition of the $A_\infty$ operation on the wrapped Floer complexes in this setting is given by  maps
\begin{equation}
  u :   \Si \to X,
\end{equation}
mapping the boundary conditions on a sequence $L_0,\dots,L_k\subset X$ of Lagrangians, which solve the equation
\begin{equation} \label{eq:equation_for_subclosed_form}
  (du - v_H \otimes \chi)^{0,1} = 0  
\end{equation}
on punctured discs with $k+1$ ends, for a specific family of subclosed $1$-forms and a family of  compatible almost complex structures on $X$ parametrised by this domain, and which have finite energy in the sense that the geometric energy
\begin{equation} \label{eq:geometric_energy}
    \int_\Si\frac12 |du-(v_H\cm u)\otimes\chi|^2
\end{equation}
is finite. We now arrive at the main notion which we shall later use as the Novikov exponent of operations on wrapped Floer complexes:
\begin{dfn}\l{dfn: top en}
The topological energy $\cE(u)$ of a solution to Equation \eqref{eq:equation_for_subclosed_form} is
\e \cE(u):= \int_\Si (u^*d\la- d[(u^*H)\chi]). 
\e
\end{dfn}
By Stokes's theorem, the topological energy depends only on the homotopy class of the map $u$.  A manipulation using the definition of the Hamiltonian flow further shows that
\begin{equation}
     \cE(u) =  \int_\Si\frac12 |du-(v_H\cm u)\otimes\chi|^2 -\int_\Si (u^*H)d\chi. 
\end{equation}
Our assumptions that $H$ is non-negative, and the fact that $d\chi\le0$, together  imply that the topological energy is positive.

\subsection{The homotopy method}
\label{sec:homotopy-method}

Consider a pair $(L^{\rho}_0, L^{\rho}_1)$ of paths of exact Lagrangians parametrised by an interval $[0,\infty)$, as well as a $1$-parameter family $H^{\rho}$ of Hamiltonians on the same domain, which satisfy the following stringent assumption:
\begin{equation}
  \parbox{33em}{the space of Hamiltonian $H^\rho$-chords with endpoints on $(L^{\rho}_0, L^{\rho}_1)$ is regular for all values of $\rho$, and is proper over the interval $[0,\infty)$.}  
\end{equation}
This means in particular that we have a canonical identification $x^{\rho} \mapsto x^{\rho'}$ between the $H^\rho$-chords connecting  $(L^{\rho}_0, L^{\rho}_1)$ and the $H^{\rho'}$-chords connecting  $(L^{\rho'}_0, L^{\rho'}_1)$ for different points in the interval.

In this setting, one would like to define an associated map of Floer complexes
\begin{equation} \label{eq:homotopy_map}
    CF^*(L_0^{\rho}, L_1^{\rho}; H^\rho) \to CF^*(L_0^{\rho'}, L_1^{\rho'}; H^{\rho'})
\end{equation}
for different parameter values. Even in the most basic case, this map is not given by the naive identification of generators, but requires a correcting scaling factor:
\begin{dfn}
Given a chord $x$ of $H$, the relative action difference $\cA(x^\rho,x^{\rho'})$ for parameters $\rho$ and $\rho'$ in $[1,\infty)$ is the sum of the area of the rectangle swept by the $1$-parameter family of chords interpolating between $x^{\rho}$ and $x^\rho$ and the difference
\begin{equation}
 \int_0^1H^\rho\cm x^\rho(t) - H^{\rho'} \cm x^{\rho'}(t) dt.
\end{equation}
\end{dfn}

  If $L_0,L_1$ are exact with primitive $h_0:L_0\to\R$ and $h_1:L_1\to\R$ respectively, there is a standard notion of $\cA_H(x)\in\R$ given by 
\e\l{A}
\cA_H(x):=h_1\cm x(1)-h_0\cm x(0)-\int_0^1x^*\la+\int_0^1H\cm x\,dt.  
\e
The relative action difference agrees in this case with the difference $\cA_{H^{\rho}}(x^{\rho}) -\cA_{H^{\rho'}}(x^{\rho'}) $. 

Given the above notion, we have:
\begin{lem}
  If the moduli spaces of solutions to Floer's equation submerse over the interval $[\rho,\rho']$, there is a chain-isomorphism of Floer complexes given by
  \begin{equation} \label{eq:action-change-naive-homotopy}
       x^{\rho} \mapsto T^{\cA(x_{\rho'},x_{\rho})} x^{\rho'}.
  \end{equation} \qed
\end{lem}

The homotopy method (going back to \cite[Section 19]{FOOO}) allows one to drop the condition that all solutions over the $[\rho,\rho']$ are regular: in this case, one considers a sequence $\rho_1 < \rho_2 < \cdots < \rho_k $ of intermediate times, and a sequence $(u_1, \cdots, u_k)$ of solutions to Floer's equations at these times, with asymptotic conditions which successively match in the sense that if the output of $u_i$ is $x_i^{\rho_i} $, then the input of $u_{i+1}$ is $x_{i}^{\rho_{i+1}}$. In this case, the expression for Equation \eqref{eq:homotopy_map} is a sum of terms, one for each such configuration, with the property that the curves $u_i$ have virtual dimension $0$. The $T$-adic valuation of the contribution of such a configuration is the sum of the topological energies of the strips $u_i$, and the contribution of Equation \eqref{eq:action-change-naive-homotopy} for each subinterval. Using the positivity of topological energy, we conclude:
\begin{lem}
  If a sequence $(u_1, \cdots, u_k)$ as above contributes to the homotopy map, then the valuation of its contribution is bounded below by the sum of the relative action differences $\cA(x_{i}^{\rho_{i+1}},x_{i}^{\rho_{i}})$, over all intermediate orbits. \qed
\end{lem}
This result, and its generalisations, will be used to show that certain operations must be $T$-adically convergent, as a consequence of geometric choices that ensure that the relative action differences are bounded below by a positive multiple of $\rho_{i} - \rho_{i-1}$. The purpose of the next sections is to indicate which configurations of curves are used to define the operations of interest, at which point a version of the above Lemma will yield the desired convergence.

\subsection{Viterbo restriction: the linear part}
\label{sec:viterbo-restriction-1}

In Section \ref{sec: ex}, we considered a wrapped Fukaya category $\cW(X)$, obtained by localisation. Here, we give a slightly different account, replacing Lagrangian isotopies with Hamiltonian continuation maps. For the later discussion, it is also convenient to allow $X$ to be either a Liouville manifold (with complete Liouville flow) or a compact Liouville domain (with boundary along which the flow is inward pointing). We use the term \emph{end of $X$} to refer either to an infinite end or to a neighbourhood of the boundary.

We begin by fixing a finite set of Lagrangians in $X$, which along the end are invariant under the Liouville flow. We then choose a Hamiltonian $H$ on $X$ so that, for all non-negative integers $w$, all Hamiltonian $w$-chords with endpoints on a pair of Lagrangians in this set are non-degenerate and are disjoint from the end. We can then define a variant of the wrapped category:  objects are pairs $(L,w)$ with $w$ a natural number, and, before localisation, there are no morphisms from $(L,w)$ to $(L',w')$ unless either (i)  $w < w'$ or (ii) $w=w'$ and $L=L'$, in which case it is given by the Floer group $CF^*(L,L'; (w'-w) \cdot H)$ (we are omitting the choices of bounding cochains and local systems for the notation). We then obtain a category by localising at continuation elements in $HF^*(L,L; H) $, thought of as the morphisms form $(L,w)$ to $(L,w+1)$.

Let us now consider the inclusion of a subdomain $X \subset E$ in a complete Liouville domain. We fix now a finite set of Lagrangians in $E$ whose intersection with $X$ are objects of $\cW(X)$, so that if an element $L$ of this set has non-empty boundary, it is exact outside of $X$, and admits a primitive which is constant near the boundary. For the choices required to define the Viterbo restriction functor, we start by restating a basic result from \cite{AS}, which ensures positivity of actions:
\begin{lem}[Abouzaid--Seidel {\cite[Lemma 7.3]{AS}}]\l{lem: AS7.3}
There is a  $C^\iy$ function  $H:E\to(0,\iy)$ be a $C^\iy$ function which on the collar agrees with the radius function $r$ of $(E,\la)$ and near $\partial X$ agrees with the radius function $r^{\rm in}$ of $(X,\la|_X)$, and such that $H-dH(v_\la)>0$ at every point of $E\-X,$ where $v_\la$ is the vector field on $E$ defined in Definition \ref{dfn:Liouv}. 
\end{lem}
\begin{cor}
Given any pair $(L_0,L_1)$ of Lagrangians in $E$ which are exact in the complement of $X$, there exists a constant $\nu>0$ such that for every $w\ge\nu$ and for every $wH$-chord $x:[0,1]\to X\-E$ with $x(0),x(1)\in L$ we have $A_{wH}(x)\ge\al.$ \qed
\end{cor}
Up to rescaling, we may assume that $\nu=1$ in the above result.
%


The key idea in the construction of the Viterbo functor in \cite[\S4]{AS} is a family of Hamiltonian parametrised by $\rho \in (0,1]$, for which the actions of chords in $X$ are scaled by $\rho$, while those outside of $X$ remain constant. This is achieved by rescaling $H$ along the Liouville flow inside $X$. We shall instead consider a family parameterised by  $\rho \in [1,\infty)$,  $(H_\rho)_{\rho\in[1,\iy)}$ of Hamiltonians which are constant in $X$ and have the effect of rescaling outside $X$. For the next definition, denote by $\ps^r:E\to E$ the flow of $v_\la$ at time $\log r.$
\begin{dfn}\l{dfn: H^rho}
   Given a function $H$ on $E$ which is linear near $\partial X$ and on the collar of $E$, and a real number $\rho\in[1,\infty)$ define a smooth function $H^\rho:E\to(0,\iy)$ by $H^\rho:=H$ on $X$ and $H^\rho = \rho(H\cm \ps^{1/\rho})$ on 
   $E\-X.$

   Given a Lagrangian $L$ which is stable under the Liouville flow on a neighbourhood of $\partial X$ and on the end of $E$, we similarly define $L^{\rho}$ to agree with $L$ in the region $L \cap X$, and to be given by the image of $L$ under $\ps^{\rho}$ in its complement.
\end{dfn}
Note that, if $L$ lies in $X$, this means that $L^\rho$ does not depend on $\rho$. We have the following basic computations:
\begin{lem}
  Given a pair of Lagrangians $(L_0,L_1)$ in $E$ satisfying the properties in Definition \ref{dfn: H^rho}, and $H$ a function so that there is no $H$-chord with endpoints $L_0$ and $L_1$ in a neighbourhood of $\partial X$ and in the cylindrical end of $E$, the flow $\psi^\rho$ maps $H$-chords with endpoints on $(L_0,L_1)$ bijectively to  $H^\rho$-chords with endpoints on $(L^\rho_0,L^\rho_1)$. \qed
\end{lem}
In this setting, every $H$ chord from $L_0$ to $L_1$ lies either in $X\-\partial X$ or in $(E\-\partial E)\- X.$ We call this an {\it inner} chord in the former case and an {\it outer} chord in the latter case.

We may thus use the notation of Section \ref{sec:homotopy-method}, and write $x^{\rho}$ for the image of an $H$-chord $x$, with endpoints $L_0$ and $L_1$. Given how the definitions of $H^\rho$ and $L^\rho$ involve the Liouville flow, we have:
\begin{lem} \label{lem:action_rescale}
  For any inner $H$-chord $x$ with endpoints on $(L_0,L_1)$, the relative action difference associated to an interval $[\rho,\rho']$ vanishe. There is a constant $C$, so that the relative action difference is bounded below by $C \cdot (\rho'- \rho)$ if $x$ is an outer chord.
  \qed
\end{lem}
Assuming that all such chords are non-degenerate, we may now setup the homotopy map as a map of chain complexes
\begin{equation}
 CF^*(L_0,L_1; H) \to  CF^*(L^\rho_0,L^\rho_1; H^\rho).
\end{equation}
While there is some flexibility in the definition of such a map, we assume that our the homotopy restricts to constant data on $M$; this is a statement about the choice of almost complex structure not varying with respect to $\rho$ in this region..
\begin{lem}
  Assuming that $H$ satisfies the conditions of Definition \ref{lem: AS7.3}, there is a value $\rho_0$ so that the outer chords generate a subcomplex of $CF^*(L^\rho_0,L^\rho_1; H^\rho)$ whenever $\rho_0 \leq \rho $.  The quotient complex is isomorphic to $CF^*(L_0 \cap X, L_1 \cap X ; H|_X)$, and the map
  \begin{equation}
      CF^*(L_0,L_1; H)  \to    CF^*(L_0\cap X , L_1\cap X ; H|_X)
    \end{equation}
    obtained by composing the homotopy map with the projection does not depend on $\rho$ in this range.
\end{lem}
\begin{proof}
  Lemma \ref{lem:action_rescale} implies that there is a choice of $\rho_0$ past which all outer chords have actions larger than inner chords. In that case, there can be no holomorphic strip with outer input and inner output, so we conclude both that outer chords form a subcomplex and that the components of the continuation map taking value in inner chords do not depend on $\rho$ past this value. The last statement is an application of the integrated maximum principle.
\end{proof}

\subsection{Viterbo restriction: compatibility with products}
\label{sec:viterbo-restr-cohom}

Let us now consider a compact, not necessarily exact, Lagrangian $K \subset X$, equipped with brane data and a bounding cochain which we supress from the notation, as well as a exact Lagrangian $L$ in $E$ for which the Viterbo restriction map to the wrapped Fukaya category of $X$ is defined. Consider a Hamiltonian $H$ as in Definition \ref{dfn: H^rho}, and pick an almost complex structure on $E$, of contact type along the end, which extends an almost complex structure on $X$ of contact type near $\partial X$. The integrated maximumm principle of \cite{AS} implies:
\begin{lem} \label{lem:integrated_maximum_gives_identifications}
The homotopy method isomorphism $ CF^*(L,K; H)  \cong CF^*(L^{\rho},K; H^{\rho}) $ is also given by the canonical identification of generators. This identification also induces an isomorphism $CF^* (L^{\rho},K; H^{\rho}) \cong CF^*(L \cap X, K; H|_{X})$ for all $\rho$.  \qed
\end{lem}

We next assume that we have a pair of exact Lagrangians $(L_0, L_1)$, for which Viterbo restriction is defined, and consider the diagram
\begin{equation} \label{eq:restriction_preserves_product}
  \begin{tikzcd}
    CF^*(K, L_0; w H) \otimes  CF^*(L_0,L_1; w' H)  \ar[r] \ar[d] &  CF^*(K, L_1; (w+w') H)  \ar[d]  \\
    CF^*(K, L^{\rho}_0; w H) \otimes  CF^*(L_0,L_1; w' H)  \ar[r] &  CF^*(K, L_1; (w+w') H) .
  \end{tikzcd}
\end{equation}
The method of Section \ref{sec:homotopy-method} extends to give a homotopy in this diagram. The key moduli spaces to consider are moduli spaces of pseudo-holomorphic triangles, with boundary conditions $(K, L^{r}_0,  L^{r}_1)$ for $r < \rho$, and equipped with the subclosed $1$-form which agrees, in strip-like ends, with $w dt$ and $w' dt$ near the punctures with adjacent edges labelled $(K,L^{r}_0)$ and $(L^{r}_0, L^{r}_1)$. We choose this $1$-form to be independent of the parameter $r$ in the interval $[0,\infty)$, i.e. to be the one used to define the Floer product. The definition requires as well a choice of a family of almost complex structure, whose restriction to $X$ we assume not to depend on $r$. With this, we obtain a moduli space $\Mbar^{[0,\rho]}(y_0, x; y_1)$ for each triple of Hamiltonian chords with $y_i$ starting on $K$ and ending on $L_i$, and  $x$ starting on $L_0$ and ending on $L_1$. The following result then follows from the integrated maximum principle:
\begin{lem}
 If $x$ lies in the subdomain $X$, then  $\Mbar^{[0,\rho)}(y_0, x; y_1)$ is the product of the interval $[0,\rho]$ with the space $\Mbar(y_0, x; y_1)$ of pseudo-holomorphic triangles. \qed
\end{lem}

The homotopy in diagram \eqref{eq:restriction_preserves_product} is given by considering a sequence of real numbers $1 < r_1 < \cdots < r_k < \rho$, a sequence of chords $x_1, \ldots, x_k$ with endpoints $L_0$ and $L_1$, and a collection $(u_1, \ldots, u_k)$ of maps from Riemann surfaces to $E$ so that the following properties hold:
\begin{enumerate}
  \item For $1 \leq i < k$, the map $u_i$ is a solution of Floer's equation, with boundary conditions $(L^{r_i}_0, L^{r_i}_1)$ and asymptotic conditions $x_i^{r_i}$ and $x_{i+1}^{r_i}$.  
  \item The map $u_k$ is an element of the moduli space $ \Mbar^{[0,\rho)}(y_0, x; y_1)$ a pseudo-holomorphic triangle with boundary conditions $(K, L^{r_k}_0,  L^{r_k}_1)$, and asymptotic conditions $(y_0, x_k^{r_k}; y_1)$.
  \end{enumerate}
  The $T$-adic valuation of the contribution of each such configuration is given by the sum of the topological energy of each disc with the sum of the relative action differences $\cA(x_i^{r_{i+1}},x_i^{r_{i}}) $  for each intermediate chord $x_i$. Using the fact that the topological energy is positive, as well as Lemma \ref{lem: AS7.3}, we now arrive at the main estimate:
  \begin{lem}
    The $T$-adic valuation of a configuration contributing to the homotopy is bounded below by a constant multiple of $r_k$ (the element of $[0,\rho])$ corresponding to the last disc. \qed
  \end{lem}
    The above result implies that the homotopies in Diagram \eqref{eq:restriction_preserves_product} satisfy the following properties: if $\rho < \rho'$, then the difference between the homotopies associated to $\rho$ and $\rho'$ has valuation bounded below by a constant multiple of $\rho$. This means that we obtain a well-defined map by letting $\rho$ go to infinity:
  \begin{cor}
    The following diagram, in which the vertical map on the right is the Viterbo restriction map, commutes up to homotopy:
\begin{equation} 
  \begin{tikzcd}
    CF^*(K, L_0; w H) \otimes  CF^*(L_0,L_1; w' H)  \ar[r] \ar[d] &  CF^*(K, L_1; (w+w') H)  \ar[d]  \\
    CF^*(K, L_0 \cap X ; w H|_{X}) \otimes  CF^*(L_0 \cap X ,L_1 \cap X; w' H|_{X})  \ar[r] &  CF^*(K, L_1; (w+w') H|_{X}).
  \end{tikzcd}
\end{equation}    
  \end{cor}
  The above result says that, at the cohomological level, the Viterbo restriction map is compatible with the Floer product for a triple $(K,L_0,L_1)$ with $K$ compact and $L_i$ exact for $i = 0,1$, extending the result of \cite{AS} for triples of exact Lagrangians. There is a completely analogous argument showing compatibility of restriction with the Floer product for triples $(L_0,L_1,K)$. The remaining cases, in which two of the three Lagrangians are compact, or in which the triple is $(L_0,K,L_1)$, are even easier to address, as the restriction map is just given by the identity on all Floer groups. We conclude:
  \begin{cor} \label{cor:cohomology_restriction}
The cohomology-level restriction functor from the exact Fukaya category of $E$ to that of $X$  constructed in \cite{AS} extends to the cohomological wrapped Fukaya category with objects which are either (i) exact Lagrangians equipped with primitive which is constant on $\partial X$, or (ii) compact Lagrangians in $X$. \qed
  \end{cor}

\subsection{Wrapped categories using linear Hamiltonians}
\label{sec:wrapp-categ-using}

In order to extend Corollary \ref{cor:cohomology_restriction} to a statement about $A_\infty$ categories, need an auxiliary definition of wrapped Fukaya categories. To this end, we introduce variants of the category $\cC(X)$ from Hypothesis \ref{hp: branes}:
\begin{dfn}
The curved $A_\iy$ categories $\cC'(X)$ and $\cC''(X)$ have objects pairs consisting of an object of $\cC(X)$ and  a non-negative integer.  Let $L_0,L_1\subset X$ be underlying Lagrangians of $\cC(X)$ and regard these as branes, leaving local systems out of the notation. For $(L_0,k_0),(L_1,k_1)\in \obj\cC(X)\times\{0,1,2,\dots\}$ define 
\e
\hom_{\cC'(X)}((L_0,k_0),(L_1,k_1)):=
\begin{cases} CF^*(L_0,L_1;(k_0-k_1)H)\text{ for }k_0>k_1\\ 
CF^*(L_0,L_1) \text{ for } k_0=k_1   \text{ and}\\
0 \text{ for }k_0<k_1,
\end{cases}
\e
Define $\hom_{\cC''(X)}((L_0,k_0),(L_1,k_1))$ to be the same for $k_0\ne k_1$ and to be $\La$ for $k_0=k_1.$ Denote by $\cW'(X)$ the localization of $\cC'(X)$ with respect to continuation element from Definition \ref{localisation}, as well as the elements  $\{\ka_{L,k}\in \hom_{\cC'(X)}((L,(k+1)),(L,k))\}$ associated to the homotopy from $kH$ to $(k+1) H$. We denote by $\cW''(X)$ the localization of $\cC''(X)$ with respect to the same continuation morphisms.

We define on $\cC'(X)$ and $\cC''(X)$ the $A_\iy$ structures in the same way as in \S\ref{sec: Fuk2}, choosing domain-dependent almost complex structures and using \eq{eq:equation_for_subclosed_form} in place of the plain Cauchy--Riemann equation. 

There is then an $A_\iy$ inclusion $\cC(X)\to \cC'(X)$ mapping each underlying Lagrangians $L$ to $(L,0)\in\obj\cC'(X)$, and which is the identity on morphism spaces. Theh construction of homotopy units \cite[Chapter 8]{FOOO} defines an $A_\iy$ functor $\cC''(X)\to \cC'(X)$ acting as the identity between the same object sets and whose $\fm^k$ operations factor through the identity map $\hom_{\cC''(X)}(\xi,\et)\to \hom_{\cC'(X)}(\xi,\et)$ for each pair of distinct objects $\xi,\et\in \obj\cC'(X)=\obj\cC''(X)$, and whose linear term maps $1$ to a representative of the unit. 

\end{dfn}

Using Proposition \ref{prop: loc} we have:
\begin{lem}
The $A_\iy$ functors $\cC(X)\to \cC'(X) \leftarrow  \cC''(X) $ induce, after adding bounding cochains and passing to localizations, homotopy equivalences $\cW(X)\to \cW'(X) \leftarrow  \cW''(X)$. \qed 
\end{lem}


Composing the functor $\cW(X)\to \cW'(X)$ and a homotopy inverse $\cW'(X)\to \cW''(X)$ we get a homotopy equivalence from $\cW(X)$ to the subcategory of $\cW''(X)$ with object set $[\obj\cC(X)]\times\{0\}.$ The latter will in practice be useful in the next subsection. Denote by $\cF_0(X)\sb\cF''(X)$ and $\cW_0(X)\sb\cW''(X)$ the subcategories with object set  $[\obj\cC(X)]\times\{0\},$ which we  identify with $\obj\cC(X).$

\begin{rem}
Here we use $A_\iy$ localizations repeatedly, and for the definition of $\cC'(X)$ or $\cC''(X)$ we only use sub-closed $1$-forms for the definition of the continuation elements, so that we do not require popsicles as in the original paper \cite{AS}.  This is one of the main advantages of the localization approach.
\end{rem}

\subsection{Proof of Proposition \ref{prop: restriction compatible}}
\label{sec:proof-proposition}

We now outline how the results of Appendix \ref{sec:viterbo-restr-cohom} can be extended to obtain an $A_\infty$ restriction functor:
\begin{proof}[Proof of Proposition \ref{prop: restriction compatible}]

  The restriction functor $\cW''(E)\to\cW''(X)$ is given by a parametrized count of pseudo-holomorphic curves, defining maps
 \begin{equation}\l{Vit1}
    V^k:  CF^*(L_0,L_1;w_1H)\times \dots \times CF^*(L_{k-1},L_k;w_kH)   \to    CF^*(L_0\cap X, L_k\cap X;w_0H|_X).
    \end{equation}
for each sequence of Lagrangians defining objects of our category.

The geometric construction is essentially the same as that of Abouzaid--Seidel \cite[\S4]{AS} except that we use virtual counts and that we change the  Hamiltonians in \cite[\S4.2]{AS} as discussed in Appendix \ref{sec:viterbo-restriction-1}, where we defined the linear term $V^1$ of the functor. The quadratic term $V^2$ is the homotopy introduced in Appendix \ref{sec:viterbo-restr-cohom}.

The homotopy is produced from counts of cascades, which are configuration of pseudo-holomorphic curves associated to the vertices of a directed tree, so that the curve labelled by a vertex $v$ has boundary on a subsequence $\{L_i^{\rho_v}\}$, for some parameter $\rho_v \in [1,\infty)$. The key point to have in mind from the construction in \cite{AS} is that, if $v < v'$ with respect to the ordering associated to the tree, then there is chord $x$ so that the label of the output at $v$ is $x^{\rho_v}$, and the corresponding input at $v'$ is $x^{\rho_{v'}}$.

Exactly as in \cite{AS}, the integrated maximum principle together with a choice of data that is $\rho$-independent in $X$  ensures that the only non-trivial contributions are associated to vertices where at least one input is an outer orbit.

The proof that $V^k$ is $T$-adically convergent then amounts to showing that the valuation of the contribution of any cascade is bounded below by a constant multiple of $\max_{v} \rho_v$. This is exactly the same argument as in Appendix \ref{sec:viterbo-restr-cohom}: each vertex contributes a non-negative amount to the valuation, because of the positivity of topological energy, while each edge contributes the action differences of the corresponding homotopy map. Since every vertex must have at least one incoming outer edge, the result then follows from the action estimate in Appendix \ref{sec:viterbo-restriction-1}.




  Using the $A_\iy$ functors $\cW(E)\to\cW'(E) \leftarrow \cW''(E)$ and choosing homotopy inverses $\cW(X) \leftarrow \cW'(X)\to\cW''(X)$ we get an $A_\iy$ functor $\cW(E)\to\cW(X)$. 
\end{proof}


\end{document}